 \numberwithin{equation}{section}
\newcommand{\ds}{\displaystyle}
\newcommand{\ZZ}{\mathbb{Z}}
\newcommand{\CC}{\mathbb{C}}
\newcommand{\QQ}{\mathbb{Q}}
\newcommand{\RR}{\mathbb{R}}
\newcommand{\PP}{\mathbf{P}}
\newcommand{\JJ}{\mathcal{J}}
\newcommand{\OO}{\mathcal{O}}
\newcommand{\al}{\alpha}
\newcommand{\mfa}{\mathfrak{a}}
\newcommand{\mfb}{\mathfrak{b}}
\newcommand{\mfm}{\mathfrak{m}}
\newcommand{\mfab}{\mfa_\bullet}
\newcommand{\mfbb}{\mfb_\bullet}
\newcommand{\ep}{\epsilon}
\newcommand{\qa}{\quad}
\newcommand{\vp}{\varphi}
\newcommand{\lan}{\langle}
\newcommand{\ran}{\rangle}
\newcommand{\noi}{\noindent}
\providecommand{\abs}[1]{ |#1|}
\theoremstyle{plain}
\newtheorem{theorem}{Theorem}[section]
\newtheorem{defn}[theorem]{Definition}
\newtheorem{thm}[theorem]{Theorem}
\newtheorem{lem}[theorem]{Lemma}
\newtheorem{lemma}[theorem]{Lemma}
\newtheorem{corollary}[theorem]{Corollary}
\newtheorem{proposition}[theorem]{Proposition}
\newtheorem{definition}[theorem]{Definition}
 \newtheorem{example}[theorem]{\textnormal{\textbf{Example}}}
\theoremstyle{remark}
\newtheorem{remark1}[theorem]{Remark}
\newtheorem{rem}[theorem]{Remark}      
\newtheorem{assumption}[theorem]{Assumption}
\DeclareMathOperator{\loc}{loc}
\DeclareMathOperator{\Spec}{Spec}
\DeclareMathOperator{\QM}{QM}
\DeclareMathOperator{\Div}{Div}
\let \ringaccent=\r  
\renewcommand{\r}[0]{{\mathbb R}}
\newcommand{\q}[0]{{\mathbb Q}}
\newcommand{\map}[0]{\dasharrow}
\newcommand{\spec}[0]{\operatorname{Spec}}
\newcommand{\supp}[0]{\operatorname{Supp}}
\newcommand{\rup}[1]{\lceil{#1}\rceil}
\newcommand{\ord}[0]{\operatorname{ord}}
\newcommand{\tsum}[0]{\textstyle{\sum}}
\newcommand{\coeff}[0]{\operatorname{coeff}} 
\newcommand{\mld}[0]{\operatorname{mld}}
\def\loccoh#1.#2.#3.#4.{H^{#1}_{#2}(#3,#4)}
\DeclareMathAlphabet{\mathchanc}{OT1}{pzc}%
                                {m}{it}
\newcommand{\lcg}[0]{\operatorname{lcg}}
\newcommand{\lct}[0]{\operatorname{lct}}
\begin{document}

\title[Log canonical singularities of plurisubharmonic functions]{Log canonical singularities of \\ plurisubharmonic functions}

\author{Dano Kim and J\'anos Koll\'ar}

\date{}

\date{\today}

\begin{abstract}
  A plurisubharmonic weight is  \emph{log canonical} if it is at the
critical point of turning non-integrable. Given a log canonical
plurisubharmonic weight, we show that locally there always exists a log
canonical `holomorphic' weight having the same non-integrable locus. The
proof uses the theory of  quasimonomial valuations developed by
Jonsson-Musta\c{t}\v{a} and  Xu, as well as the minimal model program over
complex analytic spaces due to Fujino and Lyu-Murayama. As a consequence,
we obtain that the non-integrable locus is seminormal.
  \end{abstract}
\maketitle

\setcounter{tocdepth}{1}
\tableofcontents

\section{Introduction}

Let $\vp$ be a plurisubharmonic function on a complex manifold $X$. 
Integrability and non-integrability of singular weights of the form $e^{-2\vp}$ have been of fundamental  importance in several complex variables ever since the work of H\"ormander, Bombieri, Skoda and others on $L^2$ estimates for the $\overline{\partial}$ operator, cf.\ \cite{Ho65, B70, Sk72}. The algebraic counterparts of these singularities also have played a  central role in higher dimensional algebraic geometry and the minimal model program, cf.\ \cite{Ko97, KM, La04} and the references therein. 

Let $\JJ(\vp)$ be the multiplier ideal of $\vp$. A lot of attention had been drawn to Demailly's strong openness conjecture regarding the behavior of $\JJ((1+\ep)\vp)$ as $\ep \to 0$. An interesting algebraic approach was formulated by Jonsson and Musta\c{t}\v{a}~\cite{JM14} before  an analytic proof using $L^2$ estimates for $\overline{\partial}$ was finally given to the conjecture by Guan and Zhou~\cite{GZ15}, \cite{GZ15i}. 
 
 In this paper, we answer a fundamental question on plurisubharmonic singularities combining these algebraic and analytic methods, together with using the minimal model program. 
 
 Recall that the non-integrable locus $W(\vp) \subset X$ of $\vp$ is the subset of the points where $e^{-2\vp}$ is not locally integrable. 
  Bombieri~\cite{B70} showed the initial result that $W(\vp)$ is closed and contained in an analytic hypersurface~\cite[Theorem 2, Addendum]{B70}. 
  Demailly~\cite[VIII (7.7)]{DX}, \cite[Lem. 4.4]{D93} proved that $W(\vp)$ is a complex analytic subspace defined by a natural coherent ideal sheaf called the {\it multiplier ideal} $\JJ(\vp)$ which is defined by germs of holomorphic functions $f$ such that $\abs{f}^2 e^{-2\vp}$ is locally integrable. 
The  ideals $\JJ(c\vp)$, $c >0$, play a key role in many questions of complex analysis and geometry: see \cite{D11, D13} for  introductions.

The situation is well understood  if  $\vp$  has {\it analytic singularities}, see  Definition~\ref{ansing}.  For example in the divisorial case, this is when  locally
$\vp = \sum^{m}_{i=1} c_i \log \abs{g_i} + u$,  where the $c_i \ge 0$ are real, the  $g_i$ are  holomorphic, and $u$ is bounded. In algebraic and analytic geometry, these are frequently treated as $\RR$-divisors  $\Delta:=\sum^{m}_{i=1} c_iD_i$, where $D_i:=(g_i=0)$.
For divisors, one can use Hironaka's resolution of singularities
to reduce many questions to the special case
$\sum^{m}_{i=1} c_i \log \abs{z_i}$, where the $z_i$ are local coordinates.

For general plurisubharmonic functions $\vp$, such methods are not available, and many new phenomena appear. For example, $\vp$ may be equal to $-\infty$ on a dense subset of the domain, or  the jumping numbers of multiplier ideals $\JJ(c \vp)$ may have accumulation points, cf.  \cite{GL20, KS20, Se21}.

\subsection{Main results} {\ }

In many applications the key question is  to understand the transition from 
integrability to  non-integrability.  
Following  algebraic geometry  terminology, we  say that a pair $(X, \vp)$ is {\it klt} at $p \in X$ if $\JJ(\vp)=\OO_X$ at $p$, that is, if $e^{-2\vp}$ is locally integrable at $p$. \footnote{We are taking $e^{-2\vp}$ instead of $e^{-\vp}$ only for a technical reason.}   We say that $(X, \vp)$ is {\it lc} (or {\it log canonical}) at $p$ if $(X, (1-\delta)\vp)$ is klt for every small $\delta > 0$.

Our main theorem says that, unexpectedly, the multiplier ideals of log canonical 
plurisubharmonic functions are the same as the multiplier ideals of log canonical divisors.

\begin{theorem} \label{main}

  Let $X$ be a complex manifold and $\vp$ a quasi-plurisubharmonic function on $X$. Suppose that $(X, \vp)$ is log canonical at every point of $X$. Then every point $p$ of $X$ has a Stein open neighborhood $V \subset X$ with holomorphic functions $g_1, \ldots, g_m$ on $V$ and real constants $c_1, \ldots, c_m > 0$, such that
  \begin{itemize}  
  \item
 $(X, \psi:=\sum^{m}_{i=1} c_i  \log |g_i|)$ is log canonical at every point of $V$, and
 \item
 $\JJ(\vp) = \JJ(\psi)$ on $V$. 
 \end{itemize}

\end{theorem}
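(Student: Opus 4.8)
The strategy is three-fold: (i) describe $\JJ(\vp)$ purely by valuations; (ii) realize the finitely many relevant valuations as an honest log canonical divisor by a minimal model argument; (iii) read off $\ps$. Everything is local, so fix $p$ and shrink $X$ to a Stein neighborhood $V$. Writing the quasi-psh $\vp$ locally as a psh function plus a smooth one — which changes neither $\JJ(\vp)$ nor the log canonical property — we may assume $\vp$ is psh, and that $p\in W(\vp)$ (otherwise shrink $V$ off $W(\vp)$ and take $\ps=0$).

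\textbf{Step 1 (valuative formula for $\JJ(\vp)$).} The plan is to apply the Jonsson--Musta\c{t}\v{a} valuative calculus to the graded sequence of multiplier ideals $\mfa_k:=\JJ(k\vp)$. The analytic input enters exactly here: Guan--Zhou's strong openness theorem is what makes that calculus unconditional for an arbitrary psh weight, yielding that $v(\vp):=\lim_k\tfrac1k v(\mfa_k)$ is finite for every quasimonomial valuation $v$, that
\[ \JJ(\vp)=\bigcap_{v}\{\,f : v(f)>v(\vp)-A_X(v)\,\}, \]
and that the log canonical hypothesis is precisely the assertion $A_X(v)\ge v(\vp)$ for all $v$. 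Consequently only the \emph{lc places} (those with $A_X(v)=v(\vp)$) contribute — for every other $v$ the corresponding factor is all of $\OO_V$ on holomorphic functions — and for an lc place $v$ the ideal $\{f:v(f)>0\}$ is the radical ideal $I_{c_V(v)}$ of the center. Hence $\JJ(\vp)$ is reduced and $V(\JJ(\vp))=W(\vp)=\bigcup_v c_V(v)$; by coherence of $\JJ(\vp)$ together with the finiteness of relevant valuations (Jonsson--Musta\c{t}\v{a}, Xu, via the MMP) finitely many quasimonomial lc places $v_1,\dots,v_N$ suffice, so $\JJ(\vp)=\bigcap_{i=1}^N I_{c_V(v_i)}$.

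\textbf{Step 2 (geometric realization and conclusion).} It remains to carry this data by an effective $\RR$-divisor on $V$. Running a minimal model program over the complex analytic space $V$ (Fujino; Lyu--Murayama), together with Xu's technique for extracting quasimonomial valuations and tie-breaking, one produces a projective bimeromorphic $\mu:Y\to V$ with $Y$ smooth, an snc divisor $\sum_j E_j$ on $Y$, and an effective $\RR$-divisor $\Gamma$ on $Y$ such that $(Y,\Gamma)$ is log canonical with lc centers exactly the centers $c_Y(v_i)$ and $\mu$ is a log resolution of $\Delta:=\mu_*\Gamma$; then $(V,\Delta)$ is log canonical and its non-klt locus is $\bigcup_i c_V(v_i)=W(\vp)$. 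Here one crucially uses that the $v_i$ are lc places of the genuinely log canonical $\vp$, which is what permits $A_X(v_i)=v_i(\vp)$ to be matched by an effective divisorial pair (an arbitrary quasimonomial valuation could not be). Applying Step 1 to a local psh potential $\ps_0$ of $\Delta$ gives $\JJ(\ps_0)=I_{W(\vp)}=\JJ(\vp)$. Finally write $\Delta=\sum_{i=1}^m c_iD_i$ with the $D_i$ prime and $c_i\in(0,1]$; after shrinking $V$ each $D_i$ is principal, $D_i=(g_i=0)$ for some $g_i\in\OO(V)$, and $\ps:=\sum c_i\log|g_i|$ differs from $\ps_0$ by a bounded function, so $(V,\ps)$ is log canonical with $\JJ(\ps)=\JJ(\vp)$.

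\textbf{Main obstacle.} The crux is Step 1: a general psh weight admits no log resolution of itself, so passing from the transcendental $\vp$ to a finite valuative package genuinely requires marrying Guan--Zhou's $\overline{\partial}$/$L^2$ estimates to the Jonsson--Musta\c{t}\v{a} calculus, and then to MMP-based finiteness of lc places in order to descend to finitely many $v_i$; verifying that these $v_i$ really compute $\JJ(\vp)$ and are lc places is the heart of the matter. A secondary difficulty, and the reason the recent analytic MMP is indispensable, is that every birational construction in Step 2 — extraction of the $v_i$, building $(Y,\Gamma)$, controlling the lc centers by tie-breaking — must be performed over a complex analytic base rather than over a variety.
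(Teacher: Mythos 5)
Your overall architecture (a valuative description of $\JJ(\vp)$, then an MMP over an analytic base to realize the data by an effective divisor) matches the paper's in spirit, but both of your steps have genuine gaps exactly where the work lies. In Step~1, the valuative criterion (Theorem~\ref{vc}, from [BBJ21, Thm.~B.5]) reads $\JJ(\vp)_x=\{f:\exists\,\ep>0,\ v(f)+A(v)\ge(1+\ep)v(\vp)\ \forall v\}$; because of the uniform $\ep$ multiplying $v(\vp)$ you cannot discard the valuations with $A(v)>v(\vp)$ and reduce to the exact lc places, and there is no finite set of quasimonomial lc places ``computing'' $\JJ(\vp)$ in general --- Theorem~\ref{computer} produces one such valuation per irreducible component of $W(\vp)$, at a smooth point, and even that requires Jonsson--Musta\c{t}\v{a} plus Xu's theorem. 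The conclusion you want, that $\JJ(\vp)$ is the radical ideal of $W(\vp)$ when $(X,\vp)$ is lc, is true, but it is a theorem of Guan--Li [GL16] rather than a formal consequence of your displayed formula.

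Step~2 is where the proof really is, and it is asserted rather than carried out. There is no known way to take the quasimonomial lc places $v_i$ of a transcendental $\vp$ (whose weights may be irrational and which need not be divisorial) and directly ``extract'' them as lc places of an effective $\RR$-divisor pair; indeed the paper points out that the $\psi$ it constructs generally has \emph{different} non-maximal lc centers from $\vp$, so your stronger claim that the lc centers of $(Y,\Gamma)$ are exactly the $c_Y(v_i)$ is not what any construction of this type delivers, and your parenthetical justification (``one crucially uses that the $v_i$ are lc places of the genuinely log canonical $\vp$'') is not an argument. The paper's actual mechanism is quantitative: Demailly approximation plus scaling produces a klt pair $(U,\Delta_U)$ with divisorial singularities whose minimal log discrepancy along $W(\vp)$ is below the log canonical gap $\lcg(n)$ of Definition~\ref{epsilon.defn} --- this uses Theorem~\ref{logdis} and Corollary~\ref{four}, i.e.\ divisorial valuations $G_j$ with $A(G_j,X,\vp)\to 0$ obtained from the quasimonomial lc place by diophantine approximation \`a la Li--Xu --- and then Theorem~\ref{near.lc.=.lc.q.thm} converts ``$\mathrm{mld}<\lcg(n)$'' into an honest lc pair by an MMP that rounds coefficients in $(1-\lcg(n),1)$ up to $1$. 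The positivity of $\lcg(n)$ is the ACC theorem of Hacon--McKernan--Xu and is indispensable; none of this quantitative input appears in your argument, so the passage from the transcendental lc places to an effective log canonical $\RR$-divisor remains unproved.
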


{\it Sketch of proof.}
 It suffices to consider $p$ in $W(\vp)$, otherwise one can take $\psi = 0$. 
A natural attempt could be to use the Demailly approximations $\vp_m$   of $\vp$ as in Definition~\ref{approx.def.jk}.
The $\vp_m$ has analytic singulartities, and
$ \JJ \left( (1+\tfrac{1}{m}) \vp_m \right) = \JJ (\vp)$ by 
the strong openness theorem of \cite{GZ15}. However, $(1+\tfrac{1}{m}) \vp_m$ is not log canonical in general. One can  scale to arrange that
 $(1-\epsilon_m)\vp_m$ be
log canonical, but scaling may change the  multiplier ideal.   We go around these problems as follows.

First,  we prove in  Theorem~\ref{logdis}  that $\JJ(\vp)$  can be detected using exceptional divisors (or \emph{divisorial valuations}) with small log discrepancies, just as in the algebraic case (see Sections~\ref{div.val.ss.jk}--\ref{disc.ss.jk} for the notions of divisorial valuation and log discrepancy). 

Demailly approximation and scaling then produces   $(1-\epsilon_m)\vp_m$
and a corresponding 
$\RR$-divisor, say $(1-\epsilon_m)\Delta_m \ge 0$, such that the pair
$\bigl(X, (1-\epsilon_m)\Delta_m\bigr)$ is log canonical and
 has very small minimal log discrepancy along the  zero locus $W(\vp)$ of $\JJ (\vp)$.

 A carefully chosen minimal model program then yields a (non-unique) $\RR$-divisor
 $\Theta_m \ge 0$ such that $\bigl(X, \Theta_m\bigr)$ is log canonical and
 $\JJ (\Theta_m)=\JJ (\vp)$. 
This step is similar to   \cite{Ko14, KK22}, where it was shown that  if  the minimal log discrepancy along a subvariety $Z$ is sufficiently small, then $Z$ behaves like a log canonical center.

In general we cannot choose  $\Theta_m\geq  (1-\epsilon_m)\Delta_m$ which means that  in Theorem~\ref{main}, $\psi$ is not guaranteed to be `more singular' than $\vp$. On the other hand, it is known from \cite{G17} that in general $\psi$ cannot be `less singular' than $\vp$ (i.e. in the sense of $\psi \ge \vp + O(1)$). Perhaps such lack of monotonicity  explains why Theorem~\ref{main} was not expected earlier.  
 \medskip

 The non-integrable locus of a pair $(X, \vp)$, i.e. the complex analytic subspace of $X$ defined by the multiplier ideal sheaf, $W(\vp):=\spec_X \OO_X/\JJ(\vp)$ will be also called the \emph{non-klt locus} of $(X, \vp)$. 
 
As a consequence of
Theorem~\ref{main}, we obtain 
 that the Ambro-Fujino seminormality theorems \cite{A03, Fuj09, fuj-book} continue to hold for
 plurisubharmonic functions.

\begin{corollary}\label{semi}
Let $X$ be a complex manifold and $\vp$ a quasi-plurisubharmonic function on $X$ such that $(X, \vp)$ is log canonical at every point of $X$.
 
(1) Then the non-integrable locus   $W(\vp)$ is  reduced and seminormal.

(2) More generally, any subset of $X$ obtained from  $W(\vp)$ by repeatedly taking irreducible components, intersections and unions is reduced and seminormal.
\end{corollary}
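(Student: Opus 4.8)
The plan is to deduce Corollary~\ref{semi} from Theorem~\ref{main} by transferring the classical Ambro--Fujino seminormality results for log canonical pairs from the algebraic (or analytic divisorial) setting to the plurisubharmonic setting. The key point is that Theorem~\ref{main} replaces, locally around each point $p \in X$, the weight $\vp$ by a divisorial weight $\psi = \sum_{i=1}^m c_i \log|g_i|$ with the \emph{same} multiplier ideal and which is again log canonical; hence the non-klt locus $W(\vp)$ coincides locally with the non-klt locus $W(\psi)$ of the log canonical pair $(V, \Delta)$ with $\Delta = \sum c_i D_i$, $D_i = (g_i = 0)$. So it suffices to prove statements (1) and (2) for the non-klt locus of a log canonical $\RR$-divisor pair on a complex manifold, and then patch the local statements together.

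First I would record the reduction precisely: cover $X$ by the Stein neighborhoods $V$ produced by Theorem~\ref{main}, on each of which $\JJ(\vp) = \JJ(\psi)$ for a log canonical divisorial pair $(V,\Delta)$. Reducedness and seminormality are local analytic-geometric properties (seminormality of a complex space is checked stalk-by-stalk, or equivalently on an open cover), so it is enough to establish them on each $V$. Next I would invoke the seminormality theorem for the non-klt locus (equivalently, the union of all log canonical centers with reduced structure) of a log canonical pair: over an algebraic base this is the Ambro--Fujino theorem \cite{A03, Fuj09, fuj-book}, and over a complex analytic base the same conclusion holds using the analytic minimal model program of Fujino and Lyu--Murayama already cited in the abstract — indeed the analytic MMP is precisely what makes Fujino-style arguments (adjunction, the connectedness/normality of lc centers, vanishing theorems) go through verbatim in the complex analytic category. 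This gives (1) on each $V$, hence on $X$.

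For part (2), I would observe that the Ambro--Fujino circle of results actually shows more: the scheme-theoretic (resp.\ analytic-space-theoretic) intersection of two log canonical centers of a fixed log canonical pair is reduced and is a union of log canonical centers, and any irreducible component of a log canonical center is again a log canonical center. Therefore the collection of closed analytic subspaces obtained from $W(\psi)$ by repeatedly taking irreducible components, (scheme-theoretic) intersections, and unions stays inside the finite poset of unions of log canonical centers of $(V,\Delta)$, each member of which is reduced and seminormal by the same theorem. Since these operations commute with restriction to the members $V$ of our cover, the global statement on $X$ follows. One technical care point: I must make sure the operations ``irreducible component / intersection / union'' are interpreted so that they are compatible with the open cover — components and unions obviously are, and for intersections I would use the analytic-space (scheme-theoretic) intersection, which is local, so compatibility is automatic.

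The main obstacle I anticipate is not conceptual but one of bookkeeping and of citing the analytic version correctly: I need the Ambro--Fujino seminormality (and the finiteness/closure of the family of lc centers under intersections and components) in the \emph{complex analytic} category rather than only for algebraic varieties, and I must verify that the hypotheses under which Fujino's machinery runs (quasi-projectivity over the base, existence of dlt modifications, vanishing theorems) are available analytically. This is exactly where the Fujino and Lyu--Murayama analytic MMP enters, and the remaining work is to check that the statements in \cite{A03, Fuj09, fuj-book} are formulated — or can be re-proved with the same proofs — for a log canonical pair $(V, \Delta)$ with $V$ a Stein manifold and $\Delta$ an effective $\RR$-divisor. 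Once that local analytic input is in hand, the passage from $\psi$ back to $\vp$ via Theorem~\ref{main} and the patching over the Stein cover are routine.
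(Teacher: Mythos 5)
Your proposal is correct and takes essentially the same route as the paper: localize via Theorem~\ref{main} to replace $\vp$ by a log canonical divisorial weight $\psi$ with $\JJ(\vp)=\JJ(\psi)$, then apply the analytic Ambro--Fujino seminormality theorem to the resulting lc pair, including the refined statement about irreducible components, intersections and unions of lc centers. The analytic-category input you flag as the remaining work is exactly what the paper packages as Theorem~\ref{Ambro2003} (with the vanishing theorems of \cite{F2201} and the lc-center claims of \cite{A03, Fuj09, fuj-book}), so nothing further is needed.
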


 The part (1) answers Question 1.1 of \cite{Li20} (cf. \cite{GL16}). 
 
 See Definition~\ref{sn.defn.jk} for the equivalent notions of 
  seminormality and weak normality.  Note that in general, an irreducible component of a seminormal complex analytic space need not be seminormal, cf. \cite[(7.9.3), Section 10.2]{Ko13}. Corollary~\ref{semi} produces many examples of complex analytic subspaces of $X$ that cannot be realized as the non-integrable  locus of log canonical plurisubharmonic functions. \footnote{This can be viewed as giving answers to the log canonical version of the question: \emph{Which ideals can be realized as multiplier ideals?}, cf. \cite{LL06}, \cite{Ki10}.}

 Indeed, the non-integrable  locus of a log canonical plurisubharmonic function $\vp$  occupies a rather special place among many other subsets of $X$ one can attach to a given plurisubharmonic function (cf. Section 2.1) in that we can expect such `regular' properties when $e^{-2\vp}$ is at the critical point of turning non-integrable. (For example, one would not expect some nice properties for the sublevel set of $\lct \le 2$, while the non-integrable locus is equal to the sublevel set of $\lct \le 1$.) This aspect has been well understood from algebraic geometry when $\vp$ has (divisorial) analytic singularities. Corollary~\ref{semi} confirms that it continues to hold for general plurisubharmonic $\vp$.

\subsection{Log canonical places} {\ }

 In the proof of Theorem~\ref{main},  a key role is played by the next result which can be viewed as  strengthening \cite[Thm.B.5]{BBJ21}.

 \begin{theorem}[=Theorem~\ref{converge2}]\label{logdis}
   Let $(X, \vp)$ be a log canonical pair where $X$ is a complex manifold and $\vp$ a quasi-psh function on $X$.  Let $Z \subset X$ be an irreducible component of the zero set  $W(\vp)$.
Then there is a  sequence of prime divisors $G_j$   over $X$ ($j \ge 1$) whose centers on $X$ are equal to $Z$,   such that  the log discrepancies $A(G_j, X, \vp)$ converge to $0$ as $j \to \infty$.  
\end{theorem}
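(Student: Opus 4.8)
\textbf{Proof plan for Theorem~\ref{logdis}.}
The plan is to run a Demailly-approximation-plus-scaling argument, using the strong openness theorem to control multiplier ideals and the theory of quasimonomial valuations of Jonsson--Musta\c{t}\v{a} and Xu to manufacture the divisorial valuations with small log discrepancy. First I would reduce to the case $p \in Z$ lying in the non-klt locus and pass to a small Stein neighborhood, so that the Demailly approximants $\vp_m$ (Definition~\ref{approx.def.jk}) are defined and have analytic singularities; by the strong openness theorem of \cite{GZ15} one has $\JJ\!\left((1+\tfrac1m)\vp_m\right) = \JJ(\vp)$ for all $m$, and in particular $Z$ is an irreducible component of the non-klt locus of the pair $\bigl(X,(1+\tfrac1m)\vp_m\bigr)$. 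Since $\vp_m$ has divisorial analytic singularities, a log resolution realizes $(1+\tfrac1m)\vp_m$ as (the weight of) an $\RR$-divisor, and the log-canonicity threshold $t_m$ along $Z$ of that divisor — i.e.\ the largest $t$ for which $\bigl(X, t(1+\tfrac1m)\vp_m\bigr)$ is log canonical at the generic point of $Z$ — satisfies $t_m \le 1$, with $t_m \to 1$ as $m \to \infty$ (this last convergence is where strong openness is used: if $t_m$ stayed bounded below $1$ we could extract a uniform non-integrability contradicting $\JJ((1+\tfrac1m)\vp_m)=\JJ(\vp)=\OO_X$ off $Z$).

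The core of the argument is then to extract, for each $m$, a divisorial valuation $G_m$ with center exactly $Z$ and log discrepancy $A(G_m,X,\vp)$ small. The idea is to look at the log resolution $\pi_m\colon Y_m \to X$ of the divisor attached to $(1+\tfrac1m)\vp_m$: on $Y_m$ the transform is simple normal crossings, and the log-canonicity threshold $t_m$ is computed by some stratum over $Z$, yielding a \emph{quasimonomial} (monomial) valuation $v_m$ supported on that stratum with $A(v_m, X, (1+\tfrac1m)\vp_m) = 0$ and center $Z$. One then compares log discrepancies with respect to $\vp$ versus $(1+\tfrac1m)\vp_m$: since $(1+\tfrac1m)\vp_m$ is `more singular' than $\vp$ in the relevant asymptotic sense (the generic Lelong numbers / orders of vanishing along divisors over $X$ converge, by the Demailly approximation estimates), one gets $A(v_m, X, \vp) \le A(v_m, X, (1+\tfrac1m)\vp_m) + \eta_m = \eta_m \to 0$. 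Finally, quasimonomial valuations are limits of divisorial valuations with the same center and nearby log discrepancy (this is exactly the Jonsson--Musta\c{t}\v{a}/Xu input on the structure of the valuation space and the density of divisorial valuations, together with the fact that $A$ is continuous and that one can perturb the weights of a monomial valuation to rational ones and then take a weighted blow-up), so each $v_m$ can be replaced by an honest prime divisor $G_m$ over $X$ with center $Z$ and $A(G_m, X, \vp) \le 2\eta_m$, say. Relabeling $G_j := G_{m(j)}$ for a subsequence with $\eta_{m(j)} \to 0$ gives the desired sequence.

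The main obstacle I expect is making precise the comparison of log discrepancies between the genuinely transcendental weight $\vp$ and the analytic approximants $(1+\tfrac1m)\vp_m$ along valuations whose center is a full component $Z$ of the non-klt locus, rather than a point — one needs that $A(v, X, \cdot)$, defined via generic Lelong numbers $\nu(\vp, v)$ along the divisor, behaves well under the Demailly approximation and under the perturbation from quasimonomial to divisorial valuations, uniformly enough that the error terms $\eta_m$ genuinely go to $0$. This requires the (by now standard, but delicate in the analytic-space setting) facts that $\vp \mapsto \nu(\vp, v)$ is upper semicontinuous and that $\nu((1+\tfrac1m)\vp_m, v) \to \nu(\vp, v)$ for divisorial $v$, combined with the valuative characterization of $\JJ(\vp)$ — which is itself essentially the statement of Theorem~\ref{converge2} in the case where $\vp$ already has analytic singularities, so care is needed to avoid circularity by invoking only the analytic-singularities (divisorial) case, where everything reduces to the log-resolution computation. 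A secondary technical point is ensuring the $G_j$ are prime divisors \emph{over $X$} in the complex-analytic sense with center precisely $Z$ (not a proper subvariety), which one handles by taking the weighted blow-up along a well-chosen subscheme supported on $Z$ and checking the center of the resulting exceptional divisor directly.
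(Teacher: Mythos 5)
Your overall architecture (quasimonomial valuations with zero log discrepancy, then approximation by divisorial ones) matches the paper's, but the way you produce the quasimonomial valuations has a gap that the paper is specifically designed to avoid, and I do not see how to repair your version.

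The problem is the "core" comparison step. Your $v_m$ is normalized so that $A(v_m, X, t_m(1+\tfrac1m)\vp_m)=0$, i.e.\ $A(v_m)=t_m(1+\tfrac1m)\,v_m(\vp_m)$. Unwinding the definitions and using $v_m(\vp)\ge v_m(\vp_m)\ge v_m(\vp)-\tfrac1m A(v_m)$ (the Demailly/BBJ estimate), the best you get is
$$A(v_m,X,\vp)\;\le\;\bigl(t_m(1+\tfrac1m)-1\bigr)\,v_m(\vp),$$
so your error term $\eta_m$ is not $o(1)$ but $O\!\left(\tfrac1m\, v_m(\vp)\right)$. Since the $v_m$ are \emph{different} valuations for each $m$, the values $v_m(\vp)$ typically diverge as $m\to\infty$ (the introduction of the paper makes exactly this point: the quotients $A(G_j,X,\vp)/G_j(\vp)$ were known to tend to $0$, but "in most cases the $G_j(\vp)$ diverge"). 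So the un-normalized log discrepancies along your $v_m$ need not tend to $0$; only the normalized ratios do, which is the previously known weaker statement. This is the same obstruction the authors describe for the naive Demailly-approximation-plus-scaling attack on Theorem~\ref{main}.

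The paper circumvents this by never comparing $\vp$ to an individual approximant at the level of log discrepancies. Instead (Theorem~\ref{computer}) it attaches to $\vp$ itself the subadditive system $\mfb_j=\JJ(j\tilde\vp)$ with $\tilde\vp=\max(\vp,p\log|I_Z|)$, applies Jonsson--Musta\c{t}\v{a} together with Xu's theorem on minimizing valuations to get a single quasimonomial $v$ with $A(v,X,\vp)=0$ exactly and center $Z$ (Demailly approximation enters only inside \cite[Prop.3.12]{JM14}, where it relates $v(\tilde\vp)$ to $v(\mfbb)$ without any loss). Only then is $v$ approximated by divisorial valuations, via the local Lipschitz continuity of $w\mapsto A(w,X,\vp)$ on $\QM_T(Y,D)$ (linearity of $A$ plus concavity of $w\mapsto w(\vp)$ from \cite{BFJ08}) combined with the Li--Xu diophantine approximation, whose scaling $|v_j-v|<\ep/q_j$ is exactly what makes $q_j|v_j-v|$ small and hence $A(F_j,X,\vp)\le C\ep$ with no normalization loss; a final globalization step (Proposition~\ref{komo}) upgrades divisors over a neighborhood to divisors over $X$. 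If you want to salvage your route, you would need a uniform bound on $v_m(\vp)$ or a single limiting valuation for the whole sequence, and producing that limit is precisely the Jonsson--Musta\c{t}\v{a}/Xu input you would end up reproving.
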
 

A prime divisor over $X$ is also called a geometric divisorial valuation in this paper (see Section 3). 
 Previously it was known that quotients  $A(G_j, X, \vp)/G_j(\vp)$
 converge to $0$.  In most cases the $G_j(\vp)$ diverge,  and
 Theorem~\ref{logdis} seems the optimal result. What is crucial in the proof of Theorem~\ref{logdis} is that we are able to first obtain the `limit' of these $G_j$'s as a quasimonomial valuation $v$ with $A(v, X, \vp)=0$ (Theorem~\ref{computer}) thanks to the work of  Jonsson-Musta\c{t}\v{a} \cite{JM12, JM14} and Xu \cite{X20} and then use $v$ to obtain $G_j$'s.

Such a  quasimonomial valuation plays the role of pinpointing  where on a modification of $X$, non-integrability occurs. This  generalizes its classical divisorial counterparts called log canonical places (cf. \cite{Ka97}, \cite{Sh92}) for a log canonical center when $\vp$ has analytic singularities. 
Theorem~\ref{logdis} suggests the following generalization of the notion of a log canonical center.

\begin{definition}\label{lccenter} Let $(X, \vp)$ be a log canonical pair where $X$ is a complex manifold and $\vp$ a quasi-psh function on $X$. An irreducible closed analytic subset $Z \subset X$ is a 
\emph{log canonical center} of $\vp$ if 
 there is a  sequence of prime divisors $E_j$  over $X$ whose centers on $X$ are equal to $Z$,  such that  the log discrepancies $A(E_j, X, \vp)$ converge to $0$. 
\end{definition}

Note that this definition bypasses the need to show the existence of quasimonomial log canonical places in general (i.e. beyond those already given by Theorem~\ref{computer} for maximal log canonical centers). Also it is worth noting that in Theorem~\ref{main}, $(X, \vp)$ and $(X, \psi)$ may have different non-maximal log canonical centers  (in the sense of Definition~\ref{lccenter}).

In this paper, our use of Definition~\ref{lccenter} mainly lies in formulating the following result on the minimal log discrepancies (Definition~\ref{mld}). In the case when $Z$ is a maximal log canonical center, i.e. an irreducible component of the non-klt locus, it is from Theorem~\ref{logdis} that the condition of Definition~\ref{lccenter} holds for $Z$. We will use that case in the proof of Theorem~\ref{main}.

\begin{corollary}\label{four} 
Let $(X, \vp)$ be a log canonical pair where $X$ is a complex manifold and $\vp$ a quasi-psh function on $X$.  Let $Z \subset X$ be a log canonical center of $\vp$ (in the sense of Definition~\ref{lccenter}).  Let $\vp_m$ be the  $m$-th Demailly approximation of $\vp$. Then we have
$
\lim_{m \to \infty} \mld(Z, X,\vp_m)=0.
$
\end{corollary}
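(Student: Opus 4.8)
The plan is to read the corollary off from the definition of a log canonical center together with the standard valuative comparison between $\vp$ and its Demailly approximations $\vp_m$.

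First, since $Z$ is a log canonical center of $\vp$, Definition~\ref{lccenter} provides a sequence of prime divisors $E_j$ over $X$, each with center $Z$ on $X$, such that $A(E_j,X,\vp)\to 0$ as $j\to\infty$ (in the case used in the proof of Theorem~\ref{main}, where $Z$ is an irreducible component of $W(\vp)$, the existence of such $E_j$ is exactly Theorem~\ref{logdis}). Write $A(E,X,0)$ for the log discrepancy of a prime divisor $E$ over $(X,0)$, a finite positive rational number, so that $A(E,X,\vp)=A(E,X,0)-E(\vp)$.

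Next I invoke the basic estimate for Demailly approximations: for every prime divisor $E$ over $X$ with center in a fixed relatively compact neighbourhood of $p$ (harmless, as the statement is local),
\[
 E(\vp)-\frac{A(E,X,0)}{m}\ \le\ E(\vp_m)\ \le\ E(\vp),
\]
i.e.\ $A(E,X,\vp)\le A(E,X,\vp_m)\le A(E,X,\vp)+\tfrac1m A(E,X,0)$. Here the upper bound on $E(\vp_m)$ comes from $\vp_m\ge\vp-C_m$ for a constant $C_m$ (Ohsawa--Takegoshi extension from a point), since a divisorial valuation does not see bounded perturbations; the lower bound comes from Demailly's sub-mean-value estimate for $\vp_m$, pulled back along a log resolution, the factor $A(E,X,0)$ recording the relative canonical divisor. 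Since $(X,\vp)$ is log canonical, the left inequality gives $A(E,X,\vp_m)\ge A(E,X,\vp)\ge 0$ for all $E$, so $(X,\vp_m)$ is again log canonical and $\mld(Z,X,\vp_m)\ge 0$ is a well-defined nonnegative number. Since each $E_j$ has center exactly $Z$, the right inequality gives, for every fixed $j$,
\[
 0\ \le\ \mld(Z,X,\vp_m)\ \le\ A(E_j,X,\vp_m)\ \le\ A(E_j,X,\vp)+\frac{A(E_j,X,0)}{m}.
\]
Letting $m\to\infty$ with $j$ fixed gives $\limsup_{m\to\infty}\mld(Z,X,\vp_m)\le A(E_j,X,\vp)$, and then letting $j\to\infty$ gives $\limsup_{m\to\infty}\mld(Z,X,\vp_m)\le 0$; together with the lower bound this yields $\lim_{m\to\infty}\mld(Z,X,\vp_m)=0$.

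The argument itself is short; the substance lies entirely in the two cited inputs. The only non-formal point is the lower bound $E(\vp_m)\ge E(\vp)-A(E,X,0)/m$ for arbitrary divisorial valuations (not merely Lelong numbers), which is where $L^2$ methods enter and which should be available from the basic properties of Demailly approximations around Definition~\ref{approx.def.jk}. The one genuine subtlety in combining things is the order of the limits: $A(E_j,X,0)=A(E_j,X,\vp)+E_j(\vp)$ typically diverges with $j$ (since $E_j(\vp)\to\infty$ in most cases), so the error term $A(E_j,X,0)/m$ cannot be made small uniformly in $j$, and one must send $m\to\infty$ first for each fixed $j$ and only afterwards let $j\to\infty$ — which is precisely the order in which the corollary is stated.
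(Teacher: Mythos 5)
Your argument is correct and is essentially the paper's own proof: the paper likewise takes the sequence $E_j$ from Definition~\ref{lccenter}, bounds $\mld(Z,X,\vp_m)$ above by $A(E_j,X,\vp_m)$, and uses exactly the comparison $v(\vp)\ge v(\vp_m)\ge v(\vp)-\tfrac1m A(v)$ for divisorial valuations (the paper's Lemma~\ref{cmvpm}, quoted from \cite[Lem.B.4]{BBJ21}), with the same order of limits ($m\to\infty$ for fixed $j$, then $j\to\infty$, phrased there via an $\epsilon/2$ choice of $E_i$ and then of $m$). The only cosmetic difference is that the paper gets log canonicity of $(X,\vp_m)$ directly from $\vp_m\ge\vp+O(1)$ rather than from the valuative inequality, which changes nothing.
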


\subsection{Comparison with previous related results} {\ }

Finally we say a word on valuations and the valuative theory of plurisubharmonic singularities due to Favre-Jonsson in dimension $2$ and to Boucksom-Favre-Jonsson in general dimension  (cf. \cite{FJ04}, \cite{FJ05}, \cite{BFJ08}, \cite{BFJ14}, \cite{JM12}, \cite{JM14} and others), which play important roles in this paper. We use `geometric' versions of divisorial and quasimonomial valuations (following \cite{BBJ21}, \cite{B21}, \cite{BFJ08}, see Sections 3 and 4) which are more suitable for our setting  of a complex manifold. On the other hand, when $X$ is a smooth complex projective variety,  the geometric divisorial valuations appearing  in Theorem~\ref{logdis} and the geometric quasimonomial valuation in Theorem~\ref{computer} can be taken as the usual algebraic valuations of the function field of $X$, see Corollary~\ref{algeq}.

\begin{remark1} 
It is reasonable to expect our results to  extend to
  plurisubharmonic functions on a complex analytic space $X$ with log terminal singularities. For that purpose, the existence of a Demailly-like approximation needs to be first developed, cf. \cite{B21}. On the other hand, when $X$ has log canonical singularities, 
the proofs in Section~\ref{sec.6.jk} have several (presumably technical) problems. \end{remark1}

\begin{remark1}
For  algebraic log canonical pairs $(X, \Delta)$, \cite{KK10, KK20} prove that the union of an arbitrary collection of log canonical centers   has  Du~Bois singularities. It is quite likely that the same holds in the analytic setting of Theorem~\ref{main} as well, but, as O. Fujino pointed out,  the
current proofs for the algebraic case use some projective methods.
\end{remark1}

\noi \textbf{Acknowledgment.}
We would like to thank Chenyang Xu for informing us of an argument of \cite{LXZ21}. We would like to thank Sébastien Boucksom for helpful discussions on valuations and Mattias Jonsson for answering our questions. 
 DK was supported by Basic Science Research Program through NRF Korea funded by the Ministry of Education (2018R1D1A1B07049683).
Partial  financial support  to JK   was provided  by  the NSF under grant number
DMS-1901855.

\section{Preliminaries}

For some detailed introduction to plurisubharmonic functions, we refer to  \cite{DX, Ks93, B21}.

\subsection{Plurisubharmonic functions and their singularities} {\ }

 Let $U \subset \CC^n$ be an open subset. Let $\vp: U \to \RR \cup \{ -\infty \}$ be an upper-semicontinuous function that is not identically equal to $-\infty$. It is called \emph{plurisubharmonic} (or \emph{psh}) if, for every complex line $L \subset U$, the restriction $\vp|_L$ is either subharmonic or $\equiv -\infty$. 
 
A psh function on a complex manifold $X$ is also well-defined. A function on a complex manifold with values in $\RR \cup \{ -\infty \}$ is \emph{quasi-plurisubharmonic} (or \emph{quasi-psh}) if it is locally equal to the sum of a psh function and a smooth real valued function.

\begin{definition} \cite{D11} \label{ansing}
A quasi-psh function $\vp$ on a complex manifold $X$ is said to have {\emph{analytic singularities}} of type $\mfa^c$ if it is locally of the form $\vp = c \log \sum^{m}_{i=1} \abs{g_i} + u$ where $c \ge 0$ is real, $u$ bounded and $g_1, \ldots, g_m$ are local holomorphic functions generating a (global) coherent ideal sheaf $\mfa \subset \OO_X$. 

 We will say that $\vp$ has \textbf{\emph{analytic singularities}} if it is locally the sum of a finite number (say $k$) of psh functions with analytic singularities of types $\mfa_1^{c_1}, \ldots, \mfa_k^{c_k}$ where  $c_1, \ldots, c_k \ge 0$ are real numbers and $\mfa_1, \ldots, \mfa_k$ are coherent ideal sheaves on $X$. 
\end{definition}

  We may sometimes informally say  that a psh function is \emph{general psh} when it does not necessarily  have analytic singularities. 
  
   \begin{remark1}
 
 Note that Definition~\ref{ansing} is slightly more general than the original one where $k=1$, cf. \cite{D11}. The use of this generality is justified due to the key property of having a log resolution, which makes the usual properties to hold.  We need this generality in this paper since we will use effective $\RR$-divisors. 
 \end{remark1}
 
  Let $D$ be an effective $\RR$-divisor (i.e. a locally finite sum of prime divisors) on $X$. As a special case of Definition~\ref{ansing}, let us say that a quasi-psh function $\vp$ has \emph{algebraic poles \footnote{This terminology is slightly ad-hoc with respect to Definition~\ref{ansing},  but of course here we mean `algebraic = analytic', 'poles = singularities'.} along} $D$ if we have locally 
 
 \begin{equation}\label{fsum}
 \vp = \sum^m_{i=1} a_i \log \abs{g_i}  + u 
 \end{equation}
  where $u$ is bounded and $\sum^m_{i=1} a_i \Div(g_i) $ is a local expression of the divisor $D$.

  A \emph{log resolution} for a quasi-psh function $\vp$ with analytic singularities   is a proper modification   $f: X' \to X$ such that the pullback $f^* \vp$ has algebraic poles along a divisor $G$ on $X'$ and $G+F$ is an snc (i.e. simple normal crossing) divisor where $F$ is the exceptional divisor of $f$. Such a log resolution exists since principalizations of the ideal sheaves $\mfa_1, \ldots, \mfa_k$ exist by Hironaka~\cite{H64}.

When the previous sum \eqref{fsum} is not locally finite, one can get examples of $\vp$ with non-analytic singularities. The following are examples on $\CC$ (which can be also viewed on $\CC^n$ depending only on one variable).  
 
 \begin{example}
 
 Let $\vp(z) = \log \abs{z} + \sum_{k \ge 2} 2^{-k} \log \abs{z- \frac{1}{k}}$. This does not have analytic singularities since the pole set $\vp^{-1} (-\infty)$ is not an analytic subset, cf. \cite[Example 3.1]{KR18}. 
 \end{example}
 
 \begin{example} \cite[(2.5.4)]{Ra95}
 Let $w_j$ be a countable dense subset of a given compact subsdet $K \subset \CC$ with no isolated points. Let $a_j > 0$ be such that $\sum_{j \ge 1} a_j < \infty$. Then $\vp(z) = \sum_{j \ge 1} a_j \log \abs{z - w_j}$ is subharmonic on $\CC$, not identically $-\infty$. However, $\vp = -\infty$ holds on an uncountable dense subset of $K$. 
 
 \end{example}

For another example of a psh function $\vp$ with a dense pole set $\vp^{-1} (-\infty)$, see \cite[Example 2.1]{ACH15}.

Now we recall some fundamental invariants of singularities of psh functions and some of their associated subsets.

 The {\it multiplier ideal} $\JJ(\vp)$ is the ideal sheaf of germs of holomorphic functions $f$ such that $\abs{f}^2 e^{-2\vp}$ is locally integrable. It is coherent, cf.  \cite[Lem. 4.4]{D93}. 
The \emph{log canonical threshold} of $\vp$ at $x$ is defined to be 
   $  \lct_x (\vp) := \sup \{ c \ge 0 : \JJ(c \vp)_x = \OO_{X, x} \} $. 
   
   Given $\vp$, we can compare some subsets of $X$ defined in terms of these invariants. 

\begin{itemize}
\item
  Recall the non-integrable locus (or the non-klt locus) $$W(\vp):=\spec_X \OO_X/\JJ(\vp),$$ the complex analytic subspace defined by $\JJ(\vp)$.  Note that as a set,   $W(\vp)$  is a (typically proper) subset of the pole set
$\vp^{-1} (-\infty)$ (as one checks easily from applying \cite{OT87} to each point).  
 
 \item
 On the other hand,  it is a classical result of Skoda~\cite{Sk72a} that $$E_1 (\vp) \supset W(\vp) \supset E_n (\vp)$$ where $E_c  (\vp):= \{ x \in X : \nu (\vp, x) \ge c \}$ (for $c \ge 0$) are superlevel sets of Lelong numbers, cf. \cite[Lem.5.6]{D11}. (See \eqref{Kisel} for the definition of the Lelong number $\nu(\vp, x)$.) Note that each $E_c$ is a closed analytic subset of $X$ due to the fundamental result of  \cite{Si74} (cf. \cite[Cor.14.3]{D11}). 

\item
 Using log canonical thresholds, define $V_c \subset X$ to be the sublevel set $$V_c :=\{ y \in X : \lct_y (\vp) \le c \} $$ which is a closed analytic subset of $X$ due to \cite[1.4 (1)]{DK01}. Note that $V_1 = W(\vp)$ holds by  the openness theorem first proved in \cite{Be13}, which says that if $\lct_y (\vp) =1$, then $e^{-2\vp}$ is locally non-integrable at $y$, that is $y \in W(\vp)$.

\end{itemize}

We remark that there have been various previous work on the singularities of plurisubharmonic functions: we refer to \cite{Ks93}, \cite{Ks94}, \cite{D92}, \cite{D93},  \cite{DK01}, \cite{FJ04}, \cite{FJ05}, \cite{Si05}, \cite{BFJ08},  \cite{D11}, \cite{R13}, \cite{JM14},  \cite{GZ15}, \cite{G17}, \cite{GL20},  \cite{KR18}, \cite{KR20}, \cite{Se21} only for some of them in relation to this paper. 

 \subsection{Demailly approximation of psh functions}{\ }

  A general psh function can be approximated by ones with analytic singularities, due to the following fundamental result of Demailly~\cite{D92}. 
 
 \begin{definition}  \label{approx.def.jk} \cite{D92}
 Let $D \subset \CC^n$ be a bounded pseudoconvex domain.    The $m$-th  Demailly approximation of $\vp$ is $$ \vp_m (z) :=  \tfrac{1}{m} \sup \{ \log \abs{f(z)} : f \in \OO(D), \int_D \abs{f}^2 e^{-2m\vp} dV <1 \} .$$
\end{definition}

 The main properties of $\vp_m$ are summarized as follows. (See \cite[Thm.13.2]{D11} for the full statements.)

 \begin{theorem} \label{approx} \cite{D92}, \cite[Thm.13.2]{D11}
Using the above notation, 
 the  $\vp_m$ are psh functions with analytic singularities. As $m \to \infty$, the  $\vp_m$ converge to $\vp$ pointwise and in the $L^1_{\loc}$ norm. Also at each point, the Lelong number of $\vp_m$ converges to that of $\vp$ as $m \to \infty$.  
 \end{theorem}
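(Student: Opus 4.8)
\emph{Sketch of proof.} The plan is to recognize $\vp_m$ as, up to the factor $\tfrac1{2m}$, the logarithm of the Bergman kernel of the weighted space $H_m := \{\, f\in\OO(D) : \|f\|_m^2 := \int_D |f|^2 e^{-2m\vp}\,dV<\infty\,\}$. By the sub-mean value inequality for the plurisubharmonic function $|f|^2$, evaluation at a point is a bounded functional on the unit ball of $H_m$, locally uniformly in the point; so for a Hilbert basis $(g_k)_{k\ge1}$ of $H_m$ one gets $\sum_k|g_k(z)|^2=\sup\{|f(z)|^2:\|f\|_m\le1\}=e^{2m\vp_m(z)}$, the sum converging locally uniformly. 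This already shows $\vp_m=\tfrac1{2m}\log\sum_k|g_k|^2$ is plurisubharmonic (and upper semicontinuous, so the supremum in the definition needs no regularization). To get analytic singularities I would invoke coherence of the multiplier ideal $\JJ(m\vp)$: on a relatively compact piece it is generated by finitely many of the $g_k$, say $g_1,\dots,g_N$, and the tail $\sum_{k>N}|g_k|^2$ is then bounded, up to a locally bounded factor, by $\sum_{j\le N}|g_j|^2$, whence $\vp_m = \tfrac1{2m}\log\sum_{j\le N}|g_j|^2 + O(1)$, i.e. analytic singularities of type $\JJ(m\vp)^{1/m}$.

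The convergence statements rest on two-sided bounds. For the \emph{upper bound}, if $\|f\|_m\le1$ and $\overline{B(z,r)}\subset D$, then the sub-mean value inequality together with $e^{2m\vp}\le e^{2m\sup_{B(z,r)}\vp}$ gives $|f(z)|^2\le C_n r^{-2n}e^{2m\sup_{B(z,r)}\vp}$, hence $\vp_m(z)\le \sup_{B(z,r)}\vp + \tfrac1{2m}\log(C_n r^{-2n})$. Letting $m\to\infty$ and then $r\to0$, upper semicontinuity of $\vp$ gives $\limsup_m \vp_m(z)\le\vp(z)$. For the \emph{lower bound}, at a point $z$ with $\vp(z)>-\infty$ I would apply the Ohsawa--Takegoshi extension theorem in the form where the $L^2$ constant $C_D$ depends only on $n$ and $\operatorname{diam}D$ (since $m\vp$ is plurisubharmonic): it produces $f\in\OO(D)$ with $f(z)\ne0$ and $\|f\|_m^2\le C_D|f(z)|^2e^{-2m\vp(z)}$; rescaling to make $\|f\|_m<1$ yields $\vp_m(z)\ge\vp(z)-\tfrac1{2m}\log C_D$. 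Thus $\vp_m\ge\vp-\tfrac{C}{m}$ everywhere (trivially on the pole set), so $\liminf_m\vp_m(z)\ge\vp(z)$, and therefore $\vp_m\to\vp$ pointwise.

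For $L^1_{\loc}$ convergence, fix a compact $K$ and a small $r$ with $\overline{B(K,r)}\subset D$: the upper bound gives $\vp_m\le C_K$ on $K$ for all $m\ge1$, and the lower bound gives $\vp_m\ge\vp-C$ for all $m\ge1$, so $|\vp_m|\le|\vp|+C_K+C\in L^1(K)$ uniformly in $m$; dominated convergence against the pointwise limit gives $\int_K|\vp_m-\vp|\to0$. For the Lelong numbers, the inequality $\vp_m\ge\vp-\tfrac{C}{m}$ makes $\vp_m$ no more singular than $\vp$, so $\nu(\vp_m,z)\le\nu(\vp,z)$; conversely, putting $r=|w-z|$ in the upper bound and using $\sup_{B(z,\rho)}\vp=(\nu(\vp,z)+o(1))\log\rho$ as $\rho\to0$ gives $\vp_m(w)\le(\nu(\vp,z)-\tfrac{n}{m}+o(1))\log|w-z|+O(1)$, hence $\nu(\vp_m,z)\ge\nu(\vp,z)-\tfrac{n}{m}$; letting $m\to\infty$ gives $\nu(\vp_m,z)\to\nu(\vp,z)$.

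I expect the main obstacle to be the verification that $\vp_m$ \emph{genuinely has analytic singularities}: passing from the infinite Hilbert basis to a finite generating family with a controlled remainder term uses coherence and noetherianity of multiplier ideals in an essential way, and is the one point that is not a soft consequence of mean-value inequalities. The other substantive input is the lower estimate, which is exactly where the Ohsawa--Takegoshi $L^2$ extension theorem enters; everything else — the upper estimate, the domination argument, and the Lelong-number comparison — follows formally from the sub-mean value inequality and the two-sided bounds.
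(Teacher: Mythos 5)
Your sketch is correct and is essentially the standard proof of Demailly's approximation theorem from the cited sources \cite{D92} and \cite[Thm.13.2]{D11} (Bergman-kernel/extremal characterization, sub-mean value inequality for the upper bound, Ohsawa--Takegoshi for the lower bound, strong Noetherian property of coherent multiplier ideals for the analytic-singularities claim); the paper itself does not reprove this result but only cites it.
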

 
 For a complex manifold $X$ and a quasi-psh function $\vp$ on $X$, one can also define a Demailly approximation $\vp_m \to \vp$ of quasi-psh functions $\vp_m$ using partitions of unity as in \cite[\S 13]{D11}, gluing local approximants. This is possible since local approximants $\vp_m$ have the same analytic singularity types of $\JJ(m\vp)^{\tfrac{1}{m}}$. 
 
  We  refer to it as `the' Demailly approximation since the interest of this paper lies only in singularities, and the choices of open covers and partitions of unity do not matter for our purpose.  
 
  In general, we have the relation $\vp \le \vp_m + O(1)$ for every $m \ge 1$ and thus $\JJ(\vp) \subset \JJ(\vp_m)$. However we do not have equality $\JJ(\vp) = \JJ(\vp_m)$ in general. 
Instead, a consequence from the strong openness theorem of \cite{GZ15} is as follows, cf.\ \cite{D13}. 

 \begin{lemma}\label{1m} \cite[Remark 3]{D13} 
 Let $\vp$ be a quasi-psh function on a complex manifold. Let $\vp_m$ be  the $m$-th Demailly approximation of $\vp$. When $m$ is sufficiently large, we have 
  \begin{equation}\label{equality}
   \JJ \left( (1+\tfrac{1}{m}) \vp_m \right) = \JJ (\vp) .
   \end{equation}
 
 \end{lemma}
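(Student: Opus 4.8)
\textbf{Proof proposal for Lemma~\ref{1m}.}

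The plan is to deduce the equality \eqref{equality} directly from the strong openness theorem of Guan--Zhou \cite{GZ15}, using the two standard facts about Demailly approximations recorded above: the inclusion $\JJ(\vp) \subset \JJ(\vp_m)$ coming from $\vp \le \vp_m + O(1)$, and the sharper comparison $\JJ(m\vp) = \JJ(\vp_m)^{?}$-type relation that is the actual content of Demailly's construction. More precisely, the key algebraic input is that the $m$-th approximant $\vp_m$ has the same analytic singularity type as $\tfrac{1}{m}\log$ of a set of generators of $\JJ(m\vp)$; equivalently $\JJ(c\,\vp_m) = \JJ\bigl(\tfrac{c}{m}\cdot\JJ(m\vp)\bigr)$ for every $c>0$, where on the right we mean the multiplier ideal of the ideal sheaf $\JJ(m\vp)$ with coefficient $c/m$. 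Granting this, the problem becomes one about multiplier ideals of genuine ideal sheaves, where resolution of singularities is available.

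First I would set $c = 1 + \tfrac1m = \tfrac{m+1}{m}$ and rewrite the left-hand side of \eqref{equality}:
\[
\JJ\bigl((1+\tfrac1m)\vp_m\bigr) \;=\; \JJ\bigl(\tfrac{m+1}{m^2}\cdot\JJ(m\vp)\bigr).
\]
On the other hand, the subadditivity / restriction properties of multiplier ideals, together with the elementary observation $\JJ(\mfa^c) \subset \JJ(\mfa^{c'})$ for $c \ge c'$, will give on one side the containment $\JJ(\vp) = \JJ(1\cdot\vp) \subset \JJ\bigl((1+\tfrac1m)\vp_m\bigr)$ for every $m$ (this direction is essentially the easy inclusion, since $(1+\tfrac1m)\vp_m \ge \vp_m \ge \vp - O(1)$... one must be slightly careful with the direction of the coefficient, but this is exactly Demailly's $\JJ(m\vp) \subset \JJ(m\vp_m)$ rescaled). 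The real work is the reverse inclusion $\JJ\bigl((1+\tfrac1m)\vp_m\bigr) \subset \JJ(\vp)$ for $m$ large. Here I would invoke strong openness: for any germ $f$ with $|f|^2 e^{-2(1+1/m)\vp_m}$ locally integrable, one has in particular $|f|^2 e^{-2(1+1/m)\vp}$ locally integrable up to a bounded factor (using $\vp \le \vp_m + O(1)$ once more, which goes the right way after accounting for signs via $e^{-2(1+1/m)\vp} \le C e^{-2(1+1/m)\vp_m}$)... no: the inequality $\vp \le \vp_m + O(1)$ gives $e^{-2\vp} \ge c\, e^{-2\vp_m}$, which is the wrong direction. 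This is the crux, and the correct route is Demailly's stronger asymptotic estimate $\vp_m \le \vp + O(\tfrac{1}{m}\log\tfrac1{|z|})$ near the pole set (the quantitative part of Theorem~\ref{approx} / \cite[Thm.13.2]{D11}), which yields $m\,\vp_m \le m\,\vp + O(\log\tfrac1{|z|})$, hence $(m+1)\vp_m \le (m+1)\vp + \tfrac{m+1}{m}O(\log\tfrac1{|z|}) \le (m+1)\vp + 2\,O(\log\tfrac1{|z|})$ for $m\ge 1$.

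Thus I expect the argument to run: the quantitative Demailly estimate gives, for each germ $f \in \JJ\bigl((1+\tfrac1m)\vp_m\bigr)$, that $|f|^2 e^{-2(1+1/m)\vp_m}$ integrable forces $|fh|^2 e^{-2(1+1/m)\vp}$ integrable for a suitable extra holomorphic factor $h$ vanishing to the order absorbed by $O(\log\tfrac1{|z|})$ — but to avoid introducing $h$, the cleaner formulation is: $\JJ\bigl((1+\tfrac1m)\vp_m\bigr) \subset \JJ(\vp_m) = \JJ(m\vp)^{1/m}\cdot(\cdots)$, and by the strong openness theorem applied to $\vp$ itself, $\bigcup_{\ep>0}\JJ((1+\ep)\vp) = \JJ(\vp)$; since $\JJ(\vp_m) \to$ stabilizes to something between $\JJ((1+\ep)\vp)$ and $\JJ(\vp)$ for $m \gg 1/\ep$, a diagonal/Noetherianity argument closes the gap. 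Concretely: by strong openness there is $\ep_0>0$ with $\JJ((1+\ep_0)\vp) = \JJ(\vp)$ on a relatively compact neighborhood; then for $m > 2/\ep_0$ one checks $\JJ\bigl((1+\tfrac1m)\vp_m\bigr) \subset \JJ((1+\ep_0)\vp)$ using $(1+\tfrac1m)\vp_m \le (1+\ep_0)\vp + O(1)$, which itself follows by combining $\vp_m \le \vp + \tfrac{C}{m}(\text{pole term})$ with $\tfrac1m + \tfrac{C'}{m} < \ep_0$. The main obstacle is therefore pinning down precisely which quantitative comparison between $\vp_m$ and $\vp$ is needed and verifying it lies within \cite[Thm.13.2]{D11}; once the inequality $(1+\tfrac1m)\vp_m \le (1+\ep_0)\vp + O(1)$ (locally, for $m$ large depending on the compact set) is in hand, both inclusions in \eqref{equality} are immediate from monotonicity of $\JJ$ and strong openness, and the lemma follows. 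Since the statement is local and the estimates are local, working on a fixed relatively compact Stein subset and using coherence/Noetherianity to make "$m$ sufficiently large" uniform completes the argument.
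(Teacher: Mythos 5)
The step your hard inclusion rests on is false. Demailly's Theorem 13.2 (the paper's Theorem~\ref{approx}) does not contain an estimate of the form $\vp_m \le \vp + O(\tfrac1m\log\tfrac1{|z|})$; its upper bound is $\vp_m(z)\le \sup_{|\zeta-z|<r}\vp(\zeta)+\tfrac1m\log\tfrac{C_2}{r^n}$, and the supremum over the ball cannot be removed. Consequently the inequality you need, $(1+\tfrac1m)\vp_m\le(1+\ep_0)\vp+O(1)$ locally for $m>2/\ep_0$, cannot hold: at any point $x$ with $\nu(\vp,x)>0$ it would force $(1+\tfrac1m)\nu(\vp_m,x)\ge(1+\ep_0)\nu(\vp,x)$, whereas $\nu(\vp_m,x)\le\nu(\vp,x)$ (and $\nu(\vp_m,x)\to\nu(\vp,x)$), a contradiction precisely in the regime $m>1/\ep_0$ where you invoke it; already for $\vp=c\log|z|$ with $mc\notin\ZZ$, where $\vp_m$ has the singularity of $\tfrac{\lfloor mc\rfloor}{m}\log|z|$, the inequality fails. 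For general $\vp$ it is even worse: $\vp_m$ has analytic singularities and is finite at points where $\vp=-\infty$ (e.g.\ the dense-pole examples in Section 2), so no pointwise domination of $\vp_m$ by a positive multiple of $\vp$ plus a locally finite correction is possible. Since your entire argument for $\JJ((1+\tfrac1m)\vp_m)\subset\JJ((1+\ep_0)\vp)=\JJ(\vp)$ goes through this pointwise comparison, it collapses, and it cannot be repaired by any pointwise estimate: the whole point of the lemma is that this inclusion holds at the level of multiplier ideals even though $\vp_m$ is strictly less singular than $\vp$.

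Two further remarks. First, your claim that $\JJ(\vp)\subset\JJ((1+\tfrac1m)\vp_m)$ holds ``for every $m$'' is also wrong: $(1+\tfrac1m)\vp_m\le\vp_m+O(1)$ locally (not $\ge$, since $\vp_m$ is locally bounded above), and e.g.\ for $\vp=(2-\delta)\log|z|$, $m=1$, one has $\JJ(2\vp_1)=(z^2)\not\supset(z)=\JJ(\vp)$. This is exactly the direction where strong openness is used: $\vp_m\ge\vp-C/m$ gives $\JJ((1+\tfrac1m)\vp)\subset\JJ((1+\tfrac1m)\vp_m)$, and strong openness together with the strong Noetherian property of the coherent family $\JJ((1+\ep)\vp)$ on a relatively compact set gives $\JJ((1+\tfrac1m)\vp)=\JJ(\vp)$ for all large $m$. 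Second, for the reverse inclusion the correct route (the paper gives no proof of its own and simply cites \cite[Remark 3]{D13}) is an integral, not pointwise, argument using the Bergman-kernel description of $\vp_m$: locally finitely many $g_1,\dots,g_N\in\JJ(m\vp)$ with $\int|g_j|^2e^{-2m\vp}\le1$ compute the singularity of $m\vp_m$, which yields $\int_{U'}e^{2q(\vp_m-\vp)}\,dV<\infty$ for $q\le m$ on relatively compact $U'$, and a H\"older splitting of $|f|^2e^{-2\vp}$ then shows that local integrability of $|f|^2e^{-2\lambda\vp_m}$ for a factor $\lambda=1+O(1/m)$ forces $f\in\JJ(\vp)$; obtaining the sharp factor $1+\tfrac1m$ is the content of Demailly's remark. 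That integral mechanism is the missing idea in your proposal.
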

 
  This result says that  for every psh function, its multiplier ideal can be realized as that of a psh function with analytic singularities. 
  
  \begin{remark1}
 Note that this lemma holds despite some related subtleties. First, the functions $\vp_m$ (and their singularities) are not monotone decreasing, cf. \cite{Ki14}.  Also  there could be infinitely many jumping numbers of $\vp$ accumulated near a real number $c > 0$, due to examples in \cite{GL20, KS20, Se21}. 
 
 \end{remark1}

\begin{remark1}\label{lclc}
 If one tries to use Lemma~\ref{1m} for Theorem~\ref{main}, by letting $\psi := (1+\tfrac{1}{m}) \vp_m$, the problem is that when one scales by $c < 1$, so that $(X, c \psi)$ is lc, a priori it may be the case that  $\JJ(c\psi)$ is different from $\JJ(\psi)$. This occurs precisely when there exist jumping numbers in the interval $(c, 1]$.

 \end{remark1}
  
\subsection{Singularities of pairs}  {\ }

 In algebraic geometry, a fundamental role is played by the notion of `singularities of pairs' (cf.\ \cite{Ko97, Ko13}). Also see \cite{F2201} for singularities of pairs in the setting of complex analytic spaces. In this paper, we  use the terminology of pairs in the setting of quasi-psh functions on a complex manifold.

Let $X$ be a complex manifold and $\vp$ a quasi-psh function on $X$. We denote this data as a pair $(X, \vp)$. Of course, this extends the usual pairs (cf. \cite{Ko97}) with a divisor in the following sense. 

\begin{example}\label{2.7.exmp.jk}  Let $g_i$ be holomorphic functions with divisors $D_i:=(g_i=0)$. Let $\Delta = \sum_i a_i D_i$ be a finite sum for some $a_i\geq 0$. Then
  $\vp_{\Delta}:=\sum_i a_i \log\abs{g_i}$ has the same singularities as 
  $\Delta$.  Using a partition of unity, one can get such a $\vp_{\Delta}$
  for any  locally finite sum $\Delta = \sum_{i \in I} a_i D_i$.
  We set  $\JJ(\Delta):=\JJ( \vp_{\Delta})$. 
\end{example}

   It is easy to see that
  $(X, \Delta)$ is klt (resp.\ lc) in the `algebraic sense' (i.e. in the sense of \cite{Ko97} for a complex manifold) iff
  $(X, \vp_{\Delta})$ is klt (resp.\ lc) in the sense given before Theorem~\ref{main}; see for example \cite[Prop.8.3]{Ko13}.
A precise version of this is the following;
 see Definition~\ref{discrep} for the definition of $A(E,X,\vp)$.

\begin{proposition}\label{lc-klt}

Let $\vp$ be a quasi-psh function with analytic singularities on a complex manifold $X$. 

 (1) The pair $(X, \vp)$ is lc  if and only if $A(E, X, \vp) \ge 0$ for every prime divisor $E$ over $X$. 
 
 (2) The pair $(X, \vp)$ is klt  if and only if $A(E, X, \vp) > 0$ for every prime divisor $E$ over $X$.

 \end{proposition}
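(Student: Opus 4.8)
The plan is to reduce the statement to the well-known algebraic characterization via a log resolution. First I would fix a point $p \in X$ and, since the statement is local and $\vp$ has analytic singularities, replace $X$ by a small neighborhood of $p$ on which $\vp$ is a finite sum of psh functions with analytic singularities of types $\mfa_i^{c_i}$. Applying Hironaka's principalization simultaneously to $\mfa_1, \ldots, \mfa_k$ (as recalled after Definition~\ref{ansing}), I obtain a log resolution $f \colon X' \to X$ so that $f^*\vp$ has algebraic poles along an snc divisor; concretely, writing $K_{X'/X} = \sum_j b_j F_j$ for the relative canonical divisor and $f^*\vp \sim_{\text{loc}} \sum_j s_j \log|z_j| + u$ with $u$ bounded and $\sum_j (s_j + \text{exceptional part}) F_j$ snc, one has by the change-of-variables formula the standard identity
\[
\JJ(\vp) = f_*\OO_{X'}\bigl(\lceil K_{X'/X} - f^*\divisor(\vp)\rceil\bigr),
\]
and, for each $F_j$ appearing on $X'$, the log discrepancy is $A(F_j, X, \vp) = b_j + 1 - (\text{coefficient of } F_j \text{ in } f^*\divisor(\vp))$. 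This is precisely the content behind Definition~\ref{discrep}, so I would first make sure that definition is unwound to this formula.

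Next I would prove the two implications. For the "only if" directions: if $(X,\vp)$ is lc (resp. klt), then in particular $\JJ((1-\delta)\vp) = \OO_X$ for all small $\delta > 0$ (resp. $\JJ(\vp) = \OO_X$), and feeding this through the resolution formula above forces every coefficient $\lceil b_j - (1-\delta)(\text{coeff of }F_j)\rceil \ge 0$ for all small $\delta$ (resp. with $\delta = 0$), which rearranges to $A(F_j, X, \vp) \ge 0$ (resp. $> 0$) for the divisors on this particular $X'$. To get the statement for an \emph{arbitrary} prime divisor $E$ over $X$, I would invoke the standard fact that any divisorial valuation over $X$ can be realized on a further blow-up, together with the behavior of log discrepancies under composition of proper modifications — dominating $X'$ by a common resolution $X''$ that also extracts $E$ — so that the minimum of $A(\cdot, X, \vp)$ over all divisors is already attained (up to the relevant inequality) on a fixed log resolution. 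This monotonicity/finiteness step is exactly the classical "discrepancies are computed on a log resolution" lemma; the cited \cite[Prop.~8.3]{Ko13} or the analytic analogue in \cite{F2201} covers it.

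For the "if" directions: assuming $A(E, X, \vp) \ge 0$ (resp. $> 0$) for every prime divisor $E$ over $X$, apply it to the finitely many $F_j$ on a fixed log resolution $f$; then every coefficient in $\lceil K_{X'/X} - (1-\delta)f^*\divisor(\vp)\rceil$ is $\ge 0$ for all small $\delta$ (resp. with $\delta = 0$ already), hence $f_*\OO_{X'}$ of that sheaf is $\OO_X$, i.e. $\JJ((1-\delta)\vp) = \OO_X$ for all small $\delta$ (resp. $\JJ(\vp) = \OO_X$), which is the definition of lc (resp. klt). The one subtlety in the klt case is the strict inequality: $A > 0$ on a log resolution gives all coefficients $b_j - (\text{coeff of }F_j) > -1$, hence the round-up is $\ge 0$, giving $\JJ(\vp) = \OO_X$ directly.

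The main obstacle I anticipate is not any single inequality but making the reduction to a \emph{fixed} log resolution rigorous in the analytic category: one must be careful that "for every prime divisor $E$ over $X$" can be tested on finitely many divisors, which requires the analytic version of the statement that log discrepancies over a log-resolved pair are bounded below and their infimum is governed by the divisors on that resolution. In the analytic setting this is handled by \cite{F2201}, but one should check that the non-analytic bounded term $u$ in $f^*\vp$ causes no trouble — it does not, since $u$ bounded contributes $0$ to all Lelong numbers, to $\divisor(\vp)$, and to every $A(E,X,\vp)$, so it may be discarded at the outset. With that observation the proof is a routine translation of the divisorial case; accordingly I would keep the write-up short, citing \cite[Prop.~8.3]{Ko13} and \cite{F2201} for the ingredients rather than reproving principalization or the composition formula for discrepancies.
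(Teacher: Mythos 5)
Your proposal is correct and follows exactly the route the paper intends: the paper gives no written proof of Proposition~\ref{lc-klt}, treating it as the standard log-resolution computation and citing \cite[Prop.8.3]{Ko13}, which is precisely the reduction (principalize the $\mfa_i$, discard the bounded term $u$, read off $A(F_j,X,\vp)=b_j+1-d_j$ from the round-up in the multiplier-ideal transformation rule, and pass from the finitely many divisors on a fixed log resolution to all divisors over $X$ via the classical "discrepancies are computed on a log resolution" lemma) that you carry out. Your flagged subtlety about testing all prime divisors on finitely many is handled correctly, so no gap remains.
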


The equivalence in (1) still holds when $\vp$ is general psh by
Theorem~\ref{logdis}, however (2) does not hold in general, since the pair can be non-klt having a non-divisorial but quasimonomial lc place (cf. \cite{JM12}) as was discussed in the introduction. 

\quad
\\
 
\subsection{Seminormality} {\ }

We recall the notion of seminormality and weak normality introduced in  \cite{AN67}, \cite{AB69}; see also  \cite{GT80, BM19, Ko13}.\footnote{The two notions agree in characteristic 0. Analysts seem to prefer weak normality, algebraists seminormality.}

\begin{definition}\label{sn.defn.jk}
 A reduced complex analytic space $X$ is \emph{\textbf{seminormal}} (or \emph{weakly normal}) if the following condition holds for every open subset $U$ of $X$: 

(*) If a continuous function $g: U \to \CC$ is holomorphic on a dense Zariski open subset of $U$, then $g$ is holomorphic on $U$. 
\end{definition}

Note that a complex algebraic variety $W$ is seminormal as an algebraic variety (cf. \cite{GT80, Ko13}) if and only if the associated complex analytic space $W^h$ is seminormal in the sense of Definition~\ref{sn.defn.jk} by \cite[Cor. 6.13]{GT80}.

For example, the node $(xy =0)$ is seminormal, but the cusp $(x^2 - y^3 =0)$ and the ordinary triple point $(x^3 - y^3 = 0)$ are not seminormal (cf.\ \cite[10.12]{Ko13}).

Next,   we recall the Ambro-Fujino seminormality theorems \cite{A03, Fuj09, fuj-book}
in a form that we need.

\begin{theorem}\label{Ambro2003}

  Let $(X, \Delta)$ be an lc pair where $X$ is a complex analytic space and $\Delta \ge 0$ is an $\RR$-divisor. Let
  $W_\Delta$ be the non-klt locus of $(X, \Delta)$. Then $W_\Delta$ is reduced and seminormal.
  Moreover,  any subscheme obtained from $W_\Delta$ by 
repeatedly taking irreducible components, intersections and unions is also reduced and seminormal.
 
\end{theorem}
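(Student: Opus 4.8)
\emph{Sketch of proof.} The assertion is local on $X$, so we may assume $X$ is Stein and $\Delta$ is an honest effective $\RR$-divisor. Following Ambro and Fujino, the plan is to reduce to the simple normal crossing case by passing to a \emph{divisorially log terminal (dlt) modification}: using the minimal model program over complex analytic spaces (Fujino, Lyu--Murayama), produce a projective bimeromorphic morphism $f\colon Y\to X$ from a $\QQ$-factorial dlt pair $(Y,\Delta_Y)$ with $K_Y+\Delta_Y=f^*(K_X+\Delta)$, arranged so that the reduced divisor $E:=\lfloor\Delta_Y\rfloor$ is the non-klt divisor of $(Y,\Delta_Y)$ and $f(E)=W_\Delta$. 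Since $(Y,\Delta_Y)$ is dlt, it is snc in a neighbourhood of $E$; in particular $E$ is, locally analytically, a union of coordinate hyperplanes, hence reduced and seminormal (equivalently, $(E,\diff_E(\Delta_Y-E))$ is again lc, allowing an induction on dimension).

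The heart of the matter is to descend seminormality along $f$. I would show that the natural map $\OO_{W_\Delta}\to f_*\OO_E$ is an isomorphism, where $W_\Delta=\spec_X\OO_X/\JJ(\Delta)$ carries its natural, a priori possibly non-reduced, scheme structure. This rests on the Kollár--Kovács vanishing theorem in its analytic form (Fujino): for a suitable $f$-nef and $f$-big twist $A$ (built from the exceptional divisors and the round-down of $\{\Delta_Y\}$) one has $R^{>0}f_*\bigl(\OO_Y(-E)\otimes A\bigr)=0$ and $f_*\bigl(\OO_Y\otimes A\bigr)=\OO_X$, so that pushing forward
\begin{equation*}
0\to \OO_Y(-E)\otimes A\to \OO_Y\otimes A\to \OO_E\otimes A|_E\to 0
\end{equation*}
identifies $f_*(\OO_E\otimes A|_E)$ with $\OO_X/\JJ(\Delta)=\OO_{W_\Delta}$. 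Because $E$ is reduced and $f|_E$ is proper, $f_*\OO_E$ has no nilpotents, so $W_\Delta$ is reduced; and because $E$ is seminormal and $f|_E$ is a proper modification onto $W_\Delta$, seminormality descends (a continuous $g$ on an open $U\subset W_\Delta$ that is holomorphic on a dense Zariski-open pulls back to such a function on $f^{-1}(U)\cap E$, hence is holomorphic there, hence $g\in f_*\OO_E(U)=\OO_{W_\Delta}(U)$). This gives part (1).

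For the "moreover'' statement I would combine two ingredients. First, the scheme-theoretic intersection of two lc centers of $(X,\Delta)$ is a union of lc centers: realised on the dlt model, an lc center is the image of a stratum of $E$ (an intersection of components), and intersections and images of strata are again unions of strata, the scheme structures matching via the vanishing above applied after a small perturbation of coefficients. Hence the family of subschemes obtainable from $W_\Delta$ by taking irreducible components, intersections and unions coincides with the family of finite unions of lc centers. Second, for any such union $Z$, pulling back to $Y$ exhibits $Z$ with its natural structure as $f_*\OO_{E_Z}$ for a reduced sub-snc-divisor $E_Z\subseteq E$, itself seminormal; the argument of the previous paragraph then yields $\OO_Z\cong f_*\OO_{E_Z}$, so $Z$ is reduced and seminormal.

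The main obstacle is purely in the analytic input: one needs dlt modifications and the Kollár--Kovács/Ambro vanishing theorems in the complex analytic category, together with the identification $\OO_{W_\Delta}\cong f_*\OO_E$ \emph{on the nose} rather than up to reduction. The former is provided by the analytic MMP of Fujino and of Lyu--Murayama; the relevant vanishing has been developed analytically by Fujino, so what remains is bookkeeping --- tracking the twist $A$ through the ideal sequence and checking that $\JJ(\Delta)$ is exactly $f_*\bigl(\OO_Y(-E)\otimes A\bigr)$. The combinatorial fact that intersections of lc centers are unions of lc centers is then standard once the snc model is at hand.
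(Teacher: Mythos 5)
Your proposal is correct and follows essentially the same route as the paper, which simply observes that the short proof of \cite[Thm.8.8.1]{Ko07} (vanishing on a modification identifying $\OO_{W_\Delta}$ with $f_*\OO_E$ for a reduced, seminormal divisor $E$, then descending seminormality) carries over verbatim once the relevant relative vanishing theorems are available analytically via \cite{F2201}, and that the ``moreover'' part reduces to the two standard Ambro--Fujino claims about intersections and unions of lc centers. The only slip is your assertion that a dlt pair is snc in a full neighbourhood of $E=\rdown{\Delta_Y}$ --- it is only snc at the generic points of the strata --- but the conclusion you need (seminormality of $E$) is the standard dlt adjunction statement, so this does not affect the argument.
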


We use this result in the proof of Corollary~\ref{semi} when $X$ is a complex manifold. In that case,  $W_\Delta$ is equal to $W(\vp_\Delta)=  \spec_X \OO_X/\JJ(\vp_\Delta)$ where $\vp_\Delta$ is defined as in Example~\ref{2.7.exmp.jk}. 

{\it Comments on the proof.} For the seminormality of  $W_\Delta$, the
short
proof
in \cite[Thm.8.8.1]{Ko07} works without changes, using that the relevant
vanishing theorems hold over complex analytic spaces. The latter are
discussed in
\cite[Sec.5]{F2201}.

The more refined version about  irreducible components, intersections and
unions  is implied by the following two claims:
\begin{enumerate}
   \item every intersection of log canonical centers is also a union of log
canonical centers, and
\item every union (i.e. the union of an arbitrary collection) of log canonical centers is seminormal.
\end{enumerate}
These are both proved in \cite{A03, Fuj09, fuj-book};
see also \cite[Thms.7.5--6]{Ko13} for generalizations. \footnote{In the generalities of $X$ being a complex manifold (to be used in this paper) and a complex analytic space, one may also see the expositions in \cite{Ki21} and in \cite{F22} respectively.}

 \begin{remark1}
 Let $X = \CC^3$. Let $I_r \subset \OO_X$ be the ideal of $r$ very general lines through the origin $0 \in X$. In Theorem 2.2.2 of  \cite{Le08}, it is shown that if $r \le 10$, then $I_r$ can be realized as a multiplier ideal, i.e. there exists a psh function $\vp$ with analytic singularities (actually coming from a divisor) such that $I_r = \JJ(\vp)$. If we further require this $\vp$ to be lc, then  the non-klt locus $W(\vp)$ is seminormal by Corollary~\ref{semi}.  From the seminormality, it follows that $r \le 3$ by \cite[p.308]{Ko13}.  
 \end{remark1}

\section{Divisorial valuations}

On a complex algebraic variety $X$, a divisorial valuation can be understood both algebraically (as a discrete valuation ring on the function field) and geometrically (as a prime divisor over $X$). In particular, a classical method following Zariski  says that these two aspects coincide, cf.\ \cite[Lem.2.45]{KM}. 

 Since we work in the complex manifold setting in this paper, we do not have such coincidence  and we need to define a valuation geometrically (both for divisorial and quasimonomial) following the treatments of \cite{BFJ08}, \cite{BBJ21}, \cite{B21}.  \footnote{On the other hand, one can refer to e.g. \cite{H66} for some existing work in the direction of using algebraic valuations in the setting of complex analytic spaces.}

 \subsection{Divisorial valuations}\label{div.val.ss.jk}  {\ }

 A holomorphic map between complex manifolds $f: X \to Y$ is a \emph{proper modification} (or simply a modification) if it is proper and there exists a nowhere dense, closed analytic subset $B \subset Y$ such that $f: X \setminus f^{-1} (B) \to Y \setminus B$ is biholomorphic, cf.\ \cite[p.126]{DX}, \cite{H62}.

 Let $E$ be a  \emph{prime divisor  over} a complex manifold $X$,  by which we mean a closed connected smooth hypersurface $E \subset X'$ where $f: X' \to X$ is a modification (following the terminology of \cite[B.5]{BBJ21}).  We define the center of $E$ on $X$ to be $c_X (E) := f(E)$. 
 
  Let $x \in c_X (E)$ be an arbitrary point and let $\vp$ be a germ of a psh function at $x$. The divisor $E$ defines a `valuation' $v$ on psh germs by taking $v(\vp)$  to be the generic Lelong number of $f^* \vp$ along $E$, i.e. $v(\vp) := \inf_{y \in E} \nu (f^* \vp, y)$ where $\nu$ denotes the Lelong number  (cf. \cite{Si74}, \cite[(2.17)]{D11}). 
  
  In particular, $v$ is extending the valuation $\ord_E$ on the local ring $\OO_{X, x}$ of germs of holomorphic functions at $x$, obtained by $\ord_E (g) := v( \log \abs{g})$. In this sense, we call $E$  a \emph{geometric divisorial valuation}, which we may also denote by $v = \ord_E$ (as abuse of notation).  More generally, a geometric divisorial valuation is of the form $v = c \ord_E$ for some $E$ and  $c > 0$. 
 
 Prime divisors $E_1, E_2$ appearing in different modifications $f_1 : X_1 \to X$ and $f_2 : X_2 \to X$ respectively, are \emph{equivalent} when their strict transforms on a common modification $X'$ over $X_1, X_2$ coincide. In that case, $E_1$ and $E_2$ define the same geometric divisorial valuations in the above sense.

\subsection{Log discrepancies}\label{disc.ss.jk}  {\ }
 
   For a geometric divisorial valuation of the form $v = \ord_E$,  we define the \emph{log discrepancy of the valuation} (following the terminology of \cite{BFJ08, JM12})  as
\begin{equation}\label{avv}
  A(v) := 1+ v (K_{X'/X}),
\end{equation}  
\noi   where $K_{X'/X}$ is the relative canonical divisor. More generally, for a divisorial valuation $v = c \ord_E$ with $c >0$, we define $A(v) := c A(\ord_E)$. 

Next, we define the log discrepancy of a valuation with respect to a pair: 
  
  \begin{definition}\label{discrep}
 For a psh function $\vp$ on $X$, we  define the {\bf{log discrepancy}} of $(X, \vp)$ along $E$ by
  $$A(E, X, \vp) := A(v, X, \vp) := A (v)  - v (\vp)$$  where $v = \ord_E$ and $v (\vp)$ is the generic Lelong number (cf. \cite[(2.17)]{D11}, \cite[B.6.]{BBJ21}) of the pullback $f^* \vp$ along $E \subset X'$.
 \end{definition} 
 
     Note that $a(E, X, \vp) := A(E, X, \vp) -1$ is the   \emph{discrepancy}, as in  \cite{KM}. For our purpose   the log discrepancy is convenient, since it is  homogeneous, i.e. $A(cv, X, \vp) = c A(v, X, \vp)$ for real $c \ge 0$. In particular, the condition $A(v, X, \vp) \ge 0$ is preserved by scaling.

   In this paper, we will  use the following  notion only when $\vp$ has analytic singularities. 

 \begin{definition}\label{mld}\cite{A99}, \cite[Def.2.9]{Ko13}
 Let $(X, \vp)$ be an lc pair. Let $Z$ be a closed irreducible analytic subset of $X$. The \textbf{\emph{minimal log discrepancy}} of $Z$ with respect to $(X, \vp)$ is defined as 
  $$\mld (Z, X, \vp) = \inf_E A(E, X,  \vp) $$ where the infimum is taken over all prime divisors $E$ over $X$, whose center on $X$ is  $Z$. 
If  $W$ is a closed analytic subset of $X$, we define $$\mld (W, X, \Delta) := \sup_{i} \mld (W_i , X, \Delta)$$ where the $W_i$'s are irreducible components of $W$. 
\end{definition}

 \begin{example}
 
 Let $(X, \vp) = (\CC^2, \frac{5}{6} \log \abs{z^2 - w^3})$ which is an lc pair. When $Z$ is the curve $(z^2 - w^3 =0)$, $\mld (Z, X, \vp) = \frac{1}{6}$, see \cite[Example 5, (1)]{Ko14}. The pole set of $\vp$ is $Z$ (which is not seminormal) and $W(\vp) = \{ (0,0) \}$.

 \end{example}

 \subsection{Valuative characterization of multiplier ideals}  {\ }

  The following valuative characterizations of multiplier ideals and log canonical thresholds  are originally due to Boucksom-Favre-Jonsson~\cite{BFJ08} combined with the use of the strong openness theorem of Guan-Zhou~\cite{GZ15} (cf. the earlier work \cite{FJ05} when the dimension is $2$).

\begin{theorem} \cite{BFJ08}, \cite[Thm.B.5]{BBJ21}\label{vc}
 Let $X$ be a complex manifold. Let $\vp$ be a psh function on $X$. For every $x \in X$, we have 
 $$ \JJ (\vp)_x  = \{ f \in \OO_{X, x} : { \exists  \ep > 0, }\; { v(f) + A(v)}{} \ge (1+ \ep) v(\vp) \text{ for every } v  \} $$
\noi where $v$ ranges over all geometric divisorial valuations whose center $c_X (v)$ is equal to $\{ x \}$. 

\end{theorem}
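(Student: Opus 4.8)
The plan is to derive the formula from two ingredients that are already in hand: the strong openness theorem of Guan--Zhou (used here in the packaged form of Lemma~\ref{1m}, through the Demailly approximants $\vp_m$), and the classical valuative description of multiplier ideals with analytic singularities. For the latter I would recall that if $\psi$ has analytic singularities and $\pi\colon Y\to X$ is a log resolution of $\psi$ over a neighbourhood of $x$, then $\JJ(\psi)=\pi_*\OO_Y\bigl(K_{Y/X}-\lfloor\pi^*\psi\rfloor\bigr)$; since $\JJ(\psi)$ is independent of the resolution, passing to common log resolutions shows that this is equivalent to the statement that $f\in\JJ(\psi)_x$ if and only if $w(f)+A(w)>w(\psi)$ for every geometric divisorial valuation $w$ with $c_X(w)=\{x\}$. (The passage from ``all divisors over $x$ on a fixed resolution'' to ``valuations centred exactly at $x$'' is the standard one, cf.\ \cite{BFJ08}: a divisor whose centre strictly contains $x$ is recovered as a limit of iterated blow-ups centred over $x$, along which the quantities $w(f)$, $A(w)$, $w(\psi)$ vary continuously.) The strict inequality here is particular to analytic singularities; for general $\vp$ it is weakened to the $(1+\ep)$ in the statement, and that is exactly the point at which strong openness enters.

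For the inclusion ``$\subseteq$'', suppose $f\in\JJ(\vp)_x$, so $|f|^2e^{-2\vp}$ is integrable near $x$. First I would apply strong openness to get $\ep>0$ with $|f|^2e^{-2(1+\ep)\vp}$ integrable near $x$, and put $\psi:=(1+\ep)\vp$. Fix a geometric divisorial valuation with centre $\{x\}$; by homogeneity I may assume it is $\ord_E$ for a prime divisor $E\subset X'$ with $\pi\colon X'\to X$ a modification. The change of variables $\pi^*\,dV=|\jac_\pi|^2\,dV_{X'}$ (legitimate since $\pi$ is biholomorphic off a nowhere dense set, and $\divisor(\jac_\pi)=K_{X'/X}$) rewrites integrability as $\int_{\pi^{-1}(U)}|\pi^*f|^2e^{-2\pi^*\psi}\,|\jac_\pi|^2\,dV_{X'}<\infty$. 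Restricting to a small neighbourhood of a \emph{general} point $p\in E$ and choosing coordinates $(t,w)$ with $E=\{t=0\}$, near $p$ one has $|\pi^*f|^2\asymp|t|^{2\ord_E(f)}$ and $|\jac_\pi|^2\asymp|t|^{2(A(\ord_E)-1)}$, while the Siu decomposition of the positive closed current $dd^c\pi^*\psi$ provides, locally near $p$, a factorisation $\pi^*\psi=b\log|t|+\psi'$ with $b=\ord_E(\psi)$ and $\psi'$ plurisubharmonic, so $e^{-2\pi^*\psi}=|t|^{-2b}e^{-2\psi'}$ with $e^{-2\psi'}$ bounded below by a positive constant. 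Finiteness of the integral then forces $\ord_E(f)+(A(\ord_E)-1)-b>-1$, i.e.\ $\ord_E(f)+A(\ord_E)>\ord_E(\psi)=(1+\ep)\ord_E(\vp)$; rescaling, $v(f)+A(v)\ge(1+\ep)v(\vp)$ for every geometric divisorial $v$ centred at $x$, so $f$ lies in the right-hand side.

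For ``$\supseteq$'', assume $f\in\OO_{X,x}$ satisfies $v(f)+A(v)\ge(1+\ep)v(\vp)$ for all geometric divisorial $v$ with $c_X(v)=\{x\}$, for some fixed $\ep>0$. I would choose $m$ with $m>1/\ep$ and $m$ large enough that Lemma~\ref{1m} gives $\JJ\bigl((1+\tfrac1m)\vp_m\bigr)=\JJ(\vp)$ near $x$; it then suffices to prove $f\in\JJ\bigl((1+\tfrac1m)\vp_m\bigr)_x$. As $(1+\tfrac1m)\vp_m$ has analytic singularities, by the criterion recalled above I must check $w(f)+A(w)>(1+\tfrac1m)w(\vp_m)$ for every $w$ centred at $x$. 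From $\vp\le\vp_m+O(1)$ (a basic property of the Demailly approximants) and monotonicity of generic Lelong numbers, $w(\vp)\ge w(\vp_m)\ge0$. If $w(\vp_m)>0$, then $w(f)+A(w)\ge(1+\ep)w(\vp)\ge(1+\ep)w(\vp_m)>(1+\tfrac1m)w(\vp_m)$ since $\ep>1/m$; if $w(\vp_m)=0$, then $w(f)+A(w)>0$ because $A(w)>0$ (the centre of $w$ being a point, its divisor is $\pi$-exceptional, whence $\ord_E(K_{X'/X})\ge1$ and $A(w)>0$). In either case the required inequality holds, so $f\in\JJ\bigl((1+\tfrac1m)\vp_m\bigr)_x=\JJ(\vp)_x$.

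I expect the main obstacle to be the inequality extracted in the inclusion ``$\subseteq$'': one must carefully justify the change of variables under the proper modification and, at a general point of $E$, the local factorisation $\pi^*\psi=b\log|t|+(\text{plurisubharmonic})$ coming from the Siu decomposition, which requires choosing $p$ outside the countably many proper analytic subsets of $E$ along which the generic Lelong number of $\pi^*\psi$, or the vanishing orders of $\pi^*f$ or $\jac_\pi$, jump above their generic values. By contrast, once Lemma~\ref{1m} and the analytic-singularities criterion are available, the reduction in ``$\supseteq$'' is purely formal.
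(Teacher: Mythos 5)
This theorem is quoted in the paper from \cite{BFJ08} and \cite[Thm.B.5]{BBJ21} without an internal proof, and your argument correctly reconstructs the proof of those cited sources: reduce to the analytic-singularities case via the log-resolution formula, handle ``$\subseteq$'' by strong openness plus the change of variables and Siu decomposition at a general point of $E$, and handle ``$\supseteq$'' by Demailly approximation together with $\JJ\bigl((1+\tfrac1m)\vp_m\bigr)=\JJ(\vp)$. The two points you flag yourself (the passage from divisors on a fixed log resolution to valuations centred exactly at $\{x\}$, and the local factorisation $\pi^*\psi=b\log|t|+\psi'$ at a general point) are exactly the technical steps carried out in \cite{BFJ08}, so the proposal is sound and follows essentially the intended route.
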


 \begin{corollary} \cite{BFJ08}, \cite{BBJ21} \label{char} 
   The \emph{log canonical threshold} 
   $$  \lct_x (\vp) := \sup \{ c \ge 0 : \JJ(c \vp)_x = \OO_{X, x} \} $$
   equals  $\inf_{v \in T_x} \bigl({A(v)}/{v(\vp)}\bigr)$  where $T_x$ is the set of all geometric divisorial valuations whose center $c_X (v)$ is equal to $\{ x \}$.

 \end{corollary}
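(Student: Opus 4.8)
The plan is to read off Corollary~\ref{char} directly from the valuative characterization of multiplier ideals in Theorem~\ref{vc}. The starting observation is the tautology $\JJ(c\vp)_x = \OO_{X,x} \iff 1 \in \JJ(c\vp)_x$. Applying Theorem~\ref{vc} to the psh function $c\vp$ and to the germ $f = 1$, for which $v(f) = v(\log\abs{1}) = 0$, this membership holds if and only if there is some $\ep > 0$ with $A(v) \ge (1+\ep)\,c\,v(\vp)$ for every geometric divisorial valuation $v$ with $c_X(v) = \{x\}$, i.e.\ for every $v \in T_x$. Set $L := \inf_{v \in T_x} A(v)/v(\vp)$, with the convention $A(v)/v(\vp) = +\infty$ whenever $v(\vp) = 0$; this is legitimate because $A(v) > 0$ for every $v \in T_x$, as $X$ is smooth and $c_X(v)$ is a point. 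It then remains to prove $\lct_x(\vp) = L$.

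For the inequality $\lct_x(\vp) \ge L$ I would fix an arbitrary real number $c$ with $0 \le c < L$ and choose $\ep > 0$ small enough that $(1+\ep)c \le L$, which is possible since $c < L$. Then $A(v) \ge L\,v(\vp) \ge (1+\ep)c\,v(\vp)$ for every $v \in T_x$ with $v(\vp) > 0$, while for $v$ with $v(\vp) = 0$ the inequality $A(v) \ge 0 = (1+\ep)c\,v(\vp)$ is automatic. Hence $1 \in \JJ(c\vp)_x$, so $\JJ(c\vp)_x = \OO_{X,x}$ and $c \le \lct_x(\vp)$; letting $c \uparrow L$ gives $\lct_x(\vp) \ge L$.

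For the reverse inequality I would argue by contradiction. If $\lct_x(\vp) > L$, choose $c$ with $L < c < \lct_x(\vp)$. Then $\JJ(c\vp)_x = \OO_{X,x}$, so by the reformulation above there is $\ep > 0$ with $A(v) \ge (1+\ep)c\,v(\vp)$ for all $v \in T_x$. But $c > L$ forces some $v_0 \in T_x$ with $A(v_0)/v_0(\vp) < c$; in particular $v_0(\vp) > 0$, and then $A(v_0) < c\,v_0(\vp) \le (1+\ep)c\,v_0(\vp) \le A(v_0)$, a contradiction. Therefore $\lct_x(\vp) \le L$, and combined with the previous paragraph this proves $\lct_x(\vp) = L$. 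One may also note that $A(cv)/\bigl((cv)(\vp)\bigr) = A(v)/v(\vp)$ for $c>0$, so the infimum over $T_x$ equals the infimum of $A(\ord_E)/\ord_E(\vp)$ over prime divisors $E$ over $X$ with center $\{x\}$.

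There is no genuine difficulty once Theorem~\ref{vc} is granted; the only point that demands a little care is precisely the quantifier on $\ep$ in Theorem~\ref{vc} — one cannot simply take $\ep = 0$, which is why both directions are run through values $c$ lying strictly on one side of $L$ — together with the harmless bookkeeping around valuations with $v(\vp) = 0$ and the degenerate case $L = +\infty$ (equivalently $\lct_x(\vp) = +\infty$, which happens exactly when $e^{-2c\vp}$ is locally integrable at $x$ for every $c$).
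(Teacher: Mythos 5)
Your derivation is correct and is exactly the intended route: the paper states this as an immediate corollary of Theorem~\ref{vc} (citing \cite{BFJ08}, \cite{BBJ21}) without writing out the deduction, and your application of Theorem~\ref{vc} to the germ $f=1$ and the function $c\vp$, with the two-sided approximation in $c$ to handle the $\ep$-quantifier, is the standard argument the authors leave implicit. The bookkeeping for $v(\vp)=0$ and $L=+\infty$ is handled properly, so there is nothing to add.
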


 \noi (For a related result in dimension $2$, see \cite{V76}, \cite[Thm. 6.40]{KSC04}.)
 In view of these results, we may define two quasi-psh functions $\vp$ and $\psi$ on $X$ to be \emph{valuatively equivalent} if $v(\vp) = v(\psi)$ for every geometric divisorial valuation $v$ on $X$. Theorem~\ref{vc} says that in this case, the equality of multiplier ideals $\JJ(m\vp) = \JJ(m \psi)$ holds for all real $m \ge 0$. We note the following fact which points out that 
 in the conclusion of Theorem~\ref{main}, in general there may be some $m > 1$ for which $\JJ(m\vp) \neq \JJ(m\psi)$. 
 
 \begin{proposition}
 
 There exists a (germ of) psh function $\vp$ that is not valuatively equivalent to any psh function with analytic singularities.  
 
 \end{proposition}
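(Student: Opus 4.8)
The plan is to exhibit a concrete psh germ whose valuative behaviour cannot be matched by any analytic-singularity germ, using the fact that analytic singularities impose strong finiteness on the valuative invariants $v \mapsto v(\vp)$. First I would recall that if $\psi$ has analytic singularities, then after a log resolution $f \colon X' \to X$ the function $f^*\psi$ has algebraic poles along an snc divisor $G = \sum_i b_i G_i$; consequently the value $v(\psi)$ of any geometric divisorial valuation is determined by finitely many data, and in particular the \emph{set of Lelong numbers} $\{\nu(\psi, x) : x \in X\}$, equivalently $\{ \ord_x(\psi)\}$, and more refined invariants such as the collection of jumping numbers of $\JJ(c\psi)$ or the values $v(\psi)/A(v)$, are finite (or at least discrete with no accumulation points). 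So it suffices to construct a psh germ $\vp$ for which one of these invariants fails this finiteness — for instance, a $\vp$ with infinitely many jumping numbers accumulating at a point, or with a Lelong-number set that is not finite.

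The natural candidate is a suitable series of the type already appearing in the examples of the excerpt: take, on $(\CC^2, 0)$ (or $(\CC^n,0)$), a germ of the form $\vp = \sum_{k\geq 1} a_k \log|f_k|$ with $a_k > 0$, $\sum a_k < \infty$, where the $f_k$ are chosen (e.g. $f_k = z - \tfrac{1}{k}w$, or $f_k$ cutting out curves of growing multiplicity/tangency along a fixed line) so that the valuative invariants $v(\vp)$ are genuinely ``infinitely complex'' near the relevant flag of valuations. Concretely, I would arrange that the function $c \mapsto \JJ(c\vp)_0$ has infinitely many jumping numbers accumulating at some $c_0 > 0$ — this is exactly the phenomenon from \cite{GL20, KS20, Se21} cited earlier — and then invoke Corollary~\ref{char} together with Theorem~\ref{vc}: if $\vp$ were valuatively equivalent to some $\psi$ with analytic singularities, then $\JJ(c\vp) = \JJ(c\psi)$ for all real $c \geq 0$, so $\vp$ and $\psi$ would have the same jumping numbers; but a psh function with analytic singularities has only finitely many jumping numbers in any bounded interval (via its log resolution), a contradiction.

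The key steps in order: (i) record the finiteness statement — if $\psi$ has analytic singularities then its jumping numbers form a discrete set with no finite accumulation point, proved by pulling back to a log resolution $f$ so that $f^*\psi$ has algebraic poles along an snc $G$ and using the standard formula $\JJ(c\psi) = f_*\OO_{X'}(\lceil K_{X'/X} - cG\rceil)$, whence the jumping numbers lie in $\tfrac{1}{N}\ZZ$ for a fixed $N$ locally; (ii) construct the explicit $\vp$ as an infinite convergent sum of logarithms designed to produce an accumulating sequence of jumping numbers (invoking or adapting the existing constructions of \cite{GL20, KS20, Se21}); (iii) combine Theorem~\ref{vc} and Corollary~\ref{char} to conclude that valuative equivalence would force equality of all multiplier ideals $\JJ(c\cdot)$, hence of all jumping numbers, contradicting (i).

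The main obstacle I expect is step (ii): one must either cite an existing construction in a form that gives a \emph{germ} with infinitely many jumping numbers in a bounded interval and check it is of the required type, or build such an example from scratch and verify carefully that the infinite sum is psh (not $\equiv -\infty$) and that the jumping numbers genuinely accumulate — controlling $\JJ(c\vp)$ for the infinite sum requires a careful estimate, e.g. comparing with finite truncations $\vp_{\leq N} = \sum_{k\leq N} a_k\log|f_k|$ and bounding the tail's contribution to integrability. A secondary, more routine point in step (i) is making sure the finiteness of jumping numbers is stated correctly for the \emph{slightly generalized} notion of analytic singularities in Definition~\ref{ansing} (a finite sum of analytic-singularity terms), which is handled by taking a simultaneous log resolution of all the ideal sheaves $\mfa_1, \dots, \mfa_k$ via Hironaka.
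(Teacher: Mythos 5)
Your proposal is correct and follows essentially the same route as the paper: take a psh germ with an accumulation point of jumping numbers (citing the constructions of \cite{GL20, KS20, Se21}), note that valuative equivalence forces $\JJ(c\vp)=\JJ(c\psi)$ for all $c\ge 0$ via Theorem~\ref{vc}, and derive a contradiction from the discreteness of jumping numbers for analytic singularities via a log resolution. The paper simply cites the existing examples for step (ii) rather than constructing one from scratch, so the obstacle you flag there is resolved by reference.
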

 
 \begin{proof}
 
 One way to see this is when we take $\vp$ to be one with an accumulation (i.e. a cluster) point of jumping numbers at a point $p \in \CC^n$ (see  e.g. \cite[Def. 3.1]{KS20}), \cite{GL20}, \cite{KS20}, \cite{Se21}. When $\psi$ is a psh function with analytic singularities on $X$ such that $\psi$ and $\vp$ are valuatively equivalent, $\vp$ and $\psi$ have the same jumping numbers since $\JJ(m \vp) = \JJ( m \psi)$ for every real $m \ge 0$. Since $\psi$ cannot have an accumulation point of jumping numbers due to the existence of a log resolution, we get contradiction. 
  \end{proof} 
  
 \section{Quasimonomial valuations} 
 
 We introduce quasimonomial valuations. 
  We  follow the exposition of \cite{JM14, B21, JM12}; see also  \cite{BFJ08, BFJ14}.

\subsection{Monomial valuations in terms of Kiselman numbers} {\ }
 
  We first recall the following generalization of Lelong numbers, cf.\ \cite{Ks93, JM14}. 
Let $\vp$ be a psh function near the origin $0 \in \CC^n$ with  coordinates $(z_1, \ldots, z_n)$. 
Let $\al =(\al_1, \ldots, \al_n) \in \RR^n_{\ge 0}$. 
The \emph{Kiselman number} of $\vp$ at $0 \in \CC^n$ with the weight $\al$  is defined by 

\begin{equation}\label{Kisel}
 v_\al (\vp) := \sup \Bigl\{ t \ge 0 : \vp \le t \log \Bigl( \max_{1 \le j \le n, \al_j > 0} \abs{z_j}^{\frac{1}{\al_j}} \Bigr) + O(1) \Bigr\}. 
\end{equation}

The case when every $\al_j = 1$ corresponds to the usual Lelong number $\nu(\vp, 0)$.  It is known that the sup in \eqref{Kisel} can be replaced by max.  When $\vp = \log \abs{f}$ for a holomorphic function germ $f \in \OO_{\CC^n, 0}$, $v_\al (f) := v_\al (\log \abs{f})$ is indeed a valuation on the local ring $\OO_{\CC^n, 0}$  in that $v_\al (fg) = v_\al (f) + v_\al (g)$ and $v_\al (f+ g) \ge \min (v_\al (f), v_\al (g))$.

 Moreover, for a local power series $\ds f = \sum_{\beta \in \ZZ_{\ge 0}^n} c_{\beta} z^{\beta},$ we have $\ds v_\al (f) = \min_{c_{\beta} \neq 0} \; \lan \beta, \al \ran$ where $\lan \beta, \al \ran := \beta_1 \al_1 + \ldots + \beta_n \al_n$ is equal to $v_\al (z^{\beta})$, a `monomial valuation' of $z^{\beta}$. In view of this, we  call $v_\al (\vp)$  the monomial valuation with weight $\al \in \RR^n_{\ge 0}$.

\subsection{Quasimonomial valuations}  {\ }

Now let us globalize the setting with an snc divisor from local coordinates, on a proper modification.    Let $X$ be a complex manifold of dimension $n \ge 1$. Let $\mu: Y \to X$ be a proper modification from another complex manifold. Let $D = \sum_{i \in I} D_i$ be an snc divisor on $Y$ where $I$ is a finite index set.\footnote{A finite $I$ is sufficient for us since our interest is local on $X$ (or when $X$ is compact).}  

Let us define valuations on $X$ described in terms of $(Y, D)$. 
Let $J = \{ j_1, \ldots, j_m \} \subset I$ be a subset of indices with $m := \abs{J} \le n$ such that  the stratum given by  

\begin{equation}\label{DJ}
Z =   D_{j_1} \cap \ldots \cap D_{j_m} 
\end{equation}
 is a (nonempty) connected submanifold of codimension $m \ge 1$. 

Let $(z_1, \ldots, z_n)$ be local coordinates centered at an arbitrary point $p \in Z$ such that $D_{j_i}$ is locally defined by $z_i = 0$ for $1 \le i \le m$. For a psh germ $\vp$ at $p$, take the Kiselman number $v_\al (\vp)$ as in \eqref{Kisel} keeping $\al_{m+1} = \ldots = \al_{n} = 0$.

By the connectedness argument  of \cite[Lem.3.1(iv)]{JM14}, the  Kiselman number of $\vp$ with weight $\al \in \RR^n_{\ge 0}$ at a generic point $p \in Z$ is independent of $p$  \footnote{We thank Mattias Jonsson for letting us know that  \cite[Lem.3.1(iv)]{JM14} means such constancy on generic points of $Z$.}, so that we may refer to it as the `generic Kiselman number' generalizing the terminology of generic Lelong number, \cite[2.17]{D11}.
By the same result, the generic Kiselman number does not depend on the choice of local coordinates as long as they define $D_j$'s. 
Thus we may denote this generic Kiselman number as $\tau_{Z,D, \al} (\vp)$ following \cite{JM14}.

Now let $x$ be an arbitrary point of the image $\mu(Z)$ under $\mu: Y \to X$. As in the case of a geometric divisorial valuation, $v:= \tau_{Z, D, \al}$ defines a `valuation' on psh germs $\psi$ at $x$ by taking $v(\psi) := \tau_{Z, D, \al} (\mu^* \psi)$. This extends the corresponding valuation on the local ring $\OO_{X, x}$ obtained by taking $v(\log \abs{g})$ for $g \in \OO_{X, x}$. 

In this sense, we will call $ \tau_{Z, D, \al}$ (or the induced $v$) as a  \emph{{geometric quasimonomial valuation}} on $X$.  
 We say that (following \cite{JM12}) $v$ as above is adapted to the log-smooth pair $(Y,D)$. We define the center of $v$ on $Y$ to be $Z$ and the center of $v$ on $X$ to be $\mu(Z) \subset X$ for $\mu: Y \to X$ as above.
 As in \cite{JM12}, one can also define equivalence of quasimonomial valuations between one adapted to $(Y,D)$ and another adapted to a different log-smooth pair $(Y',D')$, using the Hironaka theorem.  The center of $v$ is well-defined and  coincides with that of a divisorial valuation in the case when $v$ is also a geometric divisorial valuation.

  Let  $\QM_Z (Y,D)$ denote the set of all $\tau_{Z, D, \al}$ when $(\al_1, \ldots, \al_m) \in \RR^m_{\ge 0}$ varies. We note that a quasimonomial valuation $v$ belongs to the  interior of $\QM_Z (Y,D)$  if and only if the center of $v$ on $Y$ is equal to $Z$ (cf.\ \cite[p.61]{BFJ14}).

 Define the log discrepancy $A(v)$ of a quasimonomial valuation $v \in QM_Z (Y,D)$ as given in \eqref{DJ} by (cf. \cite[Prop.5.1]{JM12})
 
 \begin{equation}\label{Avv}
  A(v) := \tsum^m_{i=1} v(D_i) A(\ord_{D_i}) = \tsum^m_{i=1} v(D_i) (1 + \ord_{D_i} (K_{Y / X}) )
 \end{equation}
  where $v(D_i) := v(z_i) = \al_i$. (For simplicity of notation, here we denote $D_{j_i}$ by $D_i$.)  We note that $A(v)$ is linear in $\al_1, \ldots, \al_m$. This coincides with the earlier definition of $A(v)$ when $v$ also happens to be a divisorial valuation, cf. \cite{JM12}.

 One can also consider $\QM(Y,D) := \bigcup_{Z} \QM_Z (Y,D) $ the set of all quasimonomial valuations described in terms of $(Y,D)$ given a proper modification $Y \to X$ and $D$, where $Z$ is taken over all those strata satisfying the setting of \eqref{DJ} for all $m \ge 1$. More details of $\QM(Y,D)$ are presented in \cite{JM12, B21, BFJ14} (whereas we only need $\QM_Z (Y,D)$ in the proof of Theorem~\ref{converge2}).    In particular, we can identify $\QM_Z (Y,D)$ with the cone $\RR^J_{\ge 0}$ which is sitting inside the vector space $\RR^I$ where the coordinates $\al_k$ with $k \in I \setminus J$ is taken to be zero. Here we may identify $\RR^I$ with $\Div (Y,D)^{*}_{\RR}$, the dual to the $\RR$-vector space spanned by the prime components of $D$. As in \cite{B21},  one can  embed $\QM (Y,D)$ as  
 $$  \QM (Y,D) = \bigcup_{J \subset I} \RR^J_{\ge 0} \subset \RR^I \cong
 \Div (Y,D)^{*}_{\RR}. $$

   \section{Quasimonomial valuations computing the  log canonical threshold}\label{jmjm}

From the valuative characterization of the log canonical thresholds, Corollary~\ref{char},  it is also possible to enlarge $T_x$ by the set of all quasimonomial valuations whose center contains $x$, cf.\ \cite[Thm.5.5]{BFJ08}. 
 A priori, the statement with the smaller $T_x$ is stronger, however it is also important in this paper to look for "achievements" of the infimum in the larger set $T_x$.

 \begin{definition}\label{computer1}
 Let $X$ be a complex manifold and $\vp$ a quasi-psh function on $X$. Let $T_x$ be the set of all geometric quasimonomial valuations on $X$ whose centers contain $x$. 
 We  say that a geometric quasimonomial valuation $v_0 \in T_x$ \textbf{\emph{computes}} $\lct_x (\vp)$ if the infimum is achieved by $v_0$, that is,  $ {A(v_0)}/{v_0 (\vp)} =  \inf_{v \in T_x} {A(v)}/{v(\vp)}. $

\end{definition}

 We will use the methods of Jonsson-Musta\c{t}\v{a}~\cite{JM14} to show the following  main result of this section. 

\begin{theorem} \label{computer}

 Let $\vp$ be a quasi-psh function on a complex manifold $X$.  
Suppose that $(X, \vp)$ is log canonical  and
 let $Z \subset X$ be an irreducible component of the non-klt locus $W(\vp)$ of $(X, \vp)$. 
 Let $x \in Z$ be a  point such that $Z$ is smooth irreducible in a neighborhood of $x$.  Then there exists a geometric quasimonomial valuation $v_1$ that computes $\lct_x (\vp)$, defined on a neighborhood $U \subset X$ of $x$, such that the center of $v_1$ is $Z \cap U$. 
 
\end{theorem}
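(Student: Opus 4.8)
The plan is to adapt the strategy of Jonsson--Musta\c{t}\v{a}~\cite{JM14} to the geometric (complex manifold) setting, working locally around $x$ so that $Z$ becomes a smooth irreducible analytic subset. First I would pass to a log resolution: since $(X,\vp)$ is log canonical and $Z$ is an irreducible component of $W(\vp)$, by Theorem~\ref{logdis} there is a sequence of geometric divisorial valuations $G_j=\ord_{E_j}$ with $c_X(E_j)=Z$ and $A(G_j,X,\vp)\to 0$. Because $Z$ is an irreducible component of $W(\vp)$, the openness theorem gives $\lct_x(\vp)=1$, so the infimum $\inf_{v\in T_x} A(v)/v(\vp)$ equals $1$, and each $G_j$ satisfies $A(G_j)/G_j(\vp)\to 1$ (they are "almost computing" valuations whose center is exactly $Z$). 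The goal is to extract a genuine quasimonomial valuation realizing this infimum.

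The key steps, in order, are as follows. Step one: choose a single proper modification $\mu\colon Y\to X$ from a complex manifold with $D=\sum_{i\in I}D_i$ an snc divisor, on which $\mu^*\vp$ has algebraic poles along (part of) $D$ up to a bounded term — but since $\vp$ is only quasi-psh (not analytic singularities) this literally fails, so instead I would use the Demailly approximations $\vp_m$, which \emph{do} have analytic singularities, take log resolutions $\mu_m\colon Y_m\to X$ adapted to $\vp_m$, and then take a diagonal/limiting argument. Step two: on each $Y_m$ one has a finite-dimensional cone $\QM_{Z_m}(Y_m,D_m)\cong\RR^{J}_{\ge 0}$ of quasimonomial valuations; the functions $v\mapsto A(v)$ and $v\mapsto v(\vp_m)$ are respectively linear and concave (the latter by the $L^1_{\loc}$-convergence / semicontinuity properties of Kiselman numbers, cf.\ \cite[Lem.3.1]{JM14}), so the log canonical threshold of $\vp_m$ is computed by some quasimonomial $v_m$ in this cone with center containing $x$, by an elementary compactness/convexity argument on the (rescaled) cone. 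Step three: normalize the $v_m$ (say $v_m(\vp_m)=1$ or restrict to a compact slice) and extract a limit $v_1$; using that $\vp_m\to\vp$ in $L^1_{\loc}$ together with the convergence of Lelong/Kiselman numbers (Theorem~\ref{approx}), one checks $v_m(\vp_m)\to v_1(\vp)$ and $A(v_m)\to A(v_1)$, so $A(v_1)/v_1(\vp)=\lim A(v_m)/v_m(\vp_m)=\lct_x(\vp)$ by Corollary~\ref{char} and Lemma~\ref{1m}. Step four: verify that after possibly shrinking $X$ to a neighborhood $U$ of $x$ the center of $v_1$ is exactly $Z\cap U$ rather than some larger stratum — here the almost-computing divisors $G_j$ with $c_X(G_j)=Z$ from Theorem~\ref{logdis} force the limit to live on the stratum whose center is $Z$, because a valuation with strictly larger center would have log discrepancy bounded away from the infimum along $Z$.

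The main obstacle I expect is Step three: controlling the limit of the quasimonomial valuations $v_m$ as $m\to\infty$. The cones $\QM_{Z_m}(Y_m,D_m)$ live on \emph{different} modifications $Y_m$, so one cannot simply invoke compactness of a fixed cone; one must either realize all $v_m$ inside the common valuation space (the Berkovich-type space of \cite{JM12, BFJ08}) and argue there, or — as Jonsson--Musta\c{t}\v{a} do — show the weights $\al^{(m)}$ stay in a bounded region and that the relevant combinatorial data stabilizes, so the limit is adapted to a single log-smooth pair. Ensuring the limit is quasimonomial (and not merely a general valuation with $A(v_1)=0$, $v_1(\vp)$ finite) is exactly the content that requires the Jonsson--Musta\c{t}\v{a} machinery: one uses that the minimizer of the linear-over-concave functional on each finite-dimensional cone is attained at a point where the "active" constraints cut out a rational subcone, and these rationality/finiteness features survive the limit. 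The complex-analytic (as opposed to algebraic) nature of $X$ is handled by working locally and by the fact that all objects involved — modifications, snc divisors, Kiselman numbers — have been set up geometrically in Sections 3--4 precisely so that the arguments of \cite{JM14} transpose verbatim; I would flag any point where $L^1_{\loc}$ convergence of $\vp_m$ (rather than algebraic input) is doing the work.
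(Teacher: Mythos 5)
There is a genuine gap, and it sits exactly where you flagged the ``main obstacle.'' Your Step three asks that the limit of the quasimonomial (in fact divisorial) minimizers $v_m$ on the varying modifications $Y_m$ again be quasimonomial, and you suggest this follows from a compactness/convexity argument plus ``rationality/finiteness features that survive the limit,'' attributing the boundedness-and-stabilization step to Jonsson--Musta\c{t}\v{a}. But this is precisely the content of Conjecture C'' of \cite{JM14}, which Jonsson--Musta\c{t}\v{a} did \emph{not} prove: a limit of divisorial valuations in the valuation space can a priori be an infinitely singular or otherwise non-quasimonomial valuation, and no elementary convexity argument rules this out. The paper's proof instead reduces to the algebraic setting --- it replaces $\vp$ by $\tilde\vp=\max(\vp,p\log\abs{I_Z})$ (Lemma~\ref{vpp}), forms the subadditive system $\mfb_j=\JJ(j\tilde\vp)$ on the localization $R=\OO_{x,Z}$ (so that $\mfm^{pj}\subset\mfb_j$ and controlled growth hold), and then invokes Theorem~\ref{JMX}, i.e.\ Xu's theorem \cite{X20} that a minimizing valuation for a graded/subadditive system is quasimonomial. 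Xu's proof uses MMP techniques (boundedness of complements) and is the essential input your sketch is missing; without it your $v_1$ is only known to be \emph{some} valuation with $A(v_1)/v_1(\vp)=\lct_x(\vp)$, which is not enough. The remaining step of the paper, analytifying the algebraic quasimonomial valuation $v_0$ back to a geometric one $v_1$, is also nontrivial bookkeeping that your outline bypasses.

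A secondary but real problem is circularity: you invoke Theorem~\ref{logdis} both to motivate the construction and, in your Step four, to pin down the center of $v_1$. In the paper Theorem~\ref{logdis} is \emph{deduced from} Theorem~\ref{computer} (its proof begins by applying Theorem~\ref{computer} to produce the quasimonomial lc place and then perturbs it to divisorial valuations via diophantine approximation), so it cannot be used as an input here. The center identification should instead be done as in the paper: since $v_0$ lives on $R=\OO_{x,Z}$ its center contains $Z$, and if it strictly contained $Z$ then $A(v_1,X,\vp)=0$ would force a larger subset of the non-klt locus through $x$, contradicting that $Z$ is an irreducible component of $W(\vp)$.
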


  We will use  the following lemma (which does not require log canonical). 
 
 \begin{lemma}\label{vpp} \cite[Proof of Lem.\ 4.1]{JM14} 
   Let $\vp$ be a quasi-psh function on a complex manifold $X$. Let $x \in X$ be a point. Let $V := V_x \subset X$ be the sublevel set $$V_x := \{ y \in X : \lct_y (\vp) \le \lct_x (\vp) \}.$$
Let $I_V$ be the ideal sheaf of  $V$ and define $\log \abs{I_V} := \log \sum^m_{j=1} \abs{g_j}$  in terms of (a choice of) local generators $g_1, \ldots, g_m$ of $I_V$ near $x$.  
   Define a quasi-psh function $\tilde{\vp}$ in a neighborhood $U$ of $x$ by 
$$\tilde{\vp} (z) := \max \{ \vp (z) , p \log \abs{I_V} (z) \} $$ where $p \ge 0$ is an integer. 

\begin{enumerate}
\item
 If $p \gg 0$, then we have $\lct_x (\tilde{\vp}) = \lct_x (\vp)$.  

\item For such $p$ as in (1), a geometric quasimonomial valuation $v \in T_x$ computes $\lct_x (\vp)$ if and only if it computes $\lct_x (\tilde{\vp})$. 

\end{enumerate}

\end{lemma}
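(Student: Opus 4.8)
The plan is to argue valuatively, using throughout the characterization $\lct_x(\psi)=\inf_{v\in T_x}A(v)/v(\psi)$ of Corollary~\ref{char}, where $T_x$ is the set of geometric quasimonomial valuations whose center contains $x$ (the extension from divisorial to quasimonomial valuations being the one recalled at the start of Section~\ref{jmjm}). Shrink $X$ to a Stein neighbourhood of $x$ on which the $g_j$ are defined and $\sum_j|g_j|\le 1$, and set $c_0:=\lct_x(\vp)$; one may assume $c_0<\infty$, since otherwise $V_x=X$, $I_V=0$ and $\tvp=\vp$. The computation behind everything is the identity
\[
v(\tvp)=v\bigl(\max\{\vp,\,p\log|I_V|\}\bigr)=\min\bigl\{\,v(\vp),\ p\,v(I_V)\,\bigr\},\qquad v(I_V):=\min_j v(g_j),
\]
valid for every geometric divisorial or quasimonomial $v$, because the generic Lelong number, and likewise the generic Kiselman number at a generic point of a stratum, of a maximum of two psh germs equals the minimum of the two generic numbers.

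For part (1) the inequality $\lct_x(\tvp)\ge c_0$ is immediate from $\tvp\ge\vp$, which gives $\JJ(c\tvp)\supseteq\JJ(c\vp)$ for all $c>0$. For the reverse inequality I follow the proof of \cite[Lem.~4.1]{JM14}. Fix by \cite{H64} a log resolution $\mu\colon Y\to X$ of $I_V$, so that $I_V\cdot\OO_Y=\OO_Y(-\sum_i b_i D_i)$ with $D:=\sum_i D_i$ and $\ex(\mu)$ simple normal crossing, and let $Z$ be a stratum of $D$ lying over the irreducible component of $V$ through $x$ with $x\in\mu(Z)$. On the finite-dimensional cone $\QM_Z(Y,D)\cong\RR^J_{\ge 0}$ the functionals $v\mapsto A(v)$ and $v\mapsto v(I_V)=\sum_i b_i\,v(D_i)$ are linear, while $v\mapsto v(\vp)$ is concave and positively homogeneous; the point imported from \cite{JM14} is that, for a suitable choice of $(Y,D)$ and $Z$, the infimum defining $\lct_x(\vp)$ is already attained inside $\QM_Z(Y,D)$ — this is exactly where one uses that $\vp$ is not arbitrary but has $V_x$ for its bad locus, the resolution of $I_V$ converting the relevant competition near $Z$ into monomial data. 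Granting this, the computing locus $\{v\in\QM_Z(Y,D):A(v)=1,\ v(\vp)=1/c_0\}$ is compact and avoids the face $\{v(I_V)=0\}$ (on that face some $g_j$ does not vanish identically on $c_X(v)$, so $c_X(v)\not\subseteq V$ and hence $A(v)/v(\vp)\ge\lct_y(\vp)>c_0$ for some $y\in c_X(v)$); so $v(I_V)\ge\rho$ there for a fixed $\rho>0$. Choosing $v_1$ in this locus and any $p>1/(c_0\rho)$, one has $v_1(\tvp)=\min\{1/c_0,\,p\,v_1(I_V)\}=1/c_0=v_1(\vp)$, hence $\lct_x(\tvp)\le A(v_1)/v_1(\tvp)=c_0$. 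This fixes the meaning of ``$p\gg 0$''.

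For part (2), keep this $p$. Let $v\in T_x$. If $v(I_V)=0$ then $v(\tvp)=0$ and $v$ computes neither threshold: were it to compute $\lct_x(\vp)$, then $\lct_y(\vp)\le A(v)/v(\vp)=c_0$ for each $y\in c_X(v)$, forcing $c_X(v)\subseteq V$ and $v(I_V)>0$. If $v(I_V)>0$ then $A(v)/v(\tvp)=\max\{A(v)/v(\vp),\,A(v)/(p\,v(I_V))\}$; since $A(v)/v(\vp)\ge c_0$ always and $\lct_x(\tvp)=c_0$ by part (1), $v$ computes $\lct_x(\tvp)$ precisely when $A(v)/v(\vp)=c_0$ — that is, $v$ computes $\lct_x(\vp)$ — and in addition $p\ge v(\vp)/v(I_V)$. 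The last inequality holds for the $p$ above, because every $v$ computing $\lct_x(\vp)$ has $c_X(v)\subseteq V$ and lands, after pulling back to $Y$ and rescaling, in the same compact computing locus (again by the \cite{JM14} argument), on which $v(\vp)/v(I_V)\le 1/(c_0\rho)<p$. Hence $v$ computes $\lct_x(\vp)$ if and only if it computes $\lct_x(\tvp)$.

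The crux — and the only non-formal step — is the compactness input in part (1): that $\lct_x(\vp)$ is actually computed inside a single finite-dimensional cone $\QM_Z(Y,D)$ attached to a log resolution of $I_V$. It cannot be obtained from an arbitrary minimizing sequence in $T_x$, because after normalising $v(\vp)=1$ the quantity $v(I_V)\ (\ge v(\mathfrak{m}_x))$ may still tend to $0$ as the log discrepancy $A(v)$ blows up along the sequence; the resolution of $I_V$ is precisely what confines the competition to a linear/concave picture on a compact polytope. This is the content taken over from \cite{JM14}; once it is in place, both statements are routine manipulations of the valuative characterization (Theorem~\ref{vc}, Corollary~\ref{char}).
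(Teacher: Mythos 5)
There is a genuine gap, and it sits exactly where you locate your ``crux.'' Your proof of part (1) rests on the claim that, for a log resolution $\mu\colon Y\to X$ of the ideal $I_V$ alone, the infimum defining $\lct_x(\vp)$ is already attained inside a single cone $\QM_Z(Y,D)$. This is false in general and is not what \cite[Lem.~4.1]{JM14} provides. Resolving $I_V$ does nothing to the singularities of $\vp$ itself: take $\vp=\log\abs{x^2-y^3}$ at $0\in\CC^2$, so $\lct_0(\vp)=\tfrac56$, $V=\{0\}$, $I_V=\mathfrak m_0$; a log resolution of $I_V$ is the single blow-up with $D=E$, the only stratum over $0$ is $Z=E$, and $\QM_E(Y,E)=\RR_{\ge0}\cdot\ord_E$ gives $A(v)/v(\vp)=1\neq\tfrac56$. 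Worse, inside this paper's architecture the claim is circular: the attainment of $\lct_x(\vp)$ by a quasimonomial valuation is precisely Theorem~\ref{computer}, whose proof \emph{uses} Lemma~\ref{vpp} (to manufacture the condition $\mfm^{pj}\subset\mfb_j$ needed for Theorem~\ref{JMX}, i.e.\ Xu's theorem). One cannot feed the conclusion back into the lemma. You yourself observe that a minimizing sequence normalized by $v(\vp)=1$ may have $v(I_V)\to0$, which is exactly why no purely valuative/compactness argument of the kind you sketch is available; the actual proof of (1) in \cite{JM14} is analytic (integrability estimates for $e^{-2c\tvp}$ via the multiplier-ideal and Demailly--Koll\'ar semicontinuity machinery), and the paper handles (1) solely by citing that proof.

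For part (2) your ``if'' direction (computes $\lct_x(\tvp)$ $\Rightarrow$ computes $\lct_x(\vp)$) is correct and is essentially the paper's argument: one only needs $v(\tvp)\le v(\vp)$ (Demailly's comparison of Lelong and Kiselman numbers), Corollary~\ref{char}, and part (1); this is also the only direction used later, in Step 3 of the proof of Theorem~\ref{computer}. Your ``only if'' direction, however, again requires a uniform lower bound $p\,v(I_V)\ge v(\vp)$ over all computing valuations, which you derive from the same unavailable compactness input. So as written the proposal does not establish (1), and establishes only half of (2).
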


\begin{proof}
 
 (1) is contained in the proof of \cite[Lem.4.1]{JM14} (in the case $\mathfrak{q} = \OO_X$). 

For (2), first note that, for each geometric quasimonomial valuation $v$, we have $v(\vp) \ge v(\tilde{\vp})$. Indeed, this follows from Demailly's  comparison theorem~\cite{D11} for Lelong numbers and Kiselman numbers.   
  This combined with  Corollary~\ref{char} yields: 
  $$\lct_x (\vp) = \inf_v \frac{A(v)}{v(\vp)} \le \inf_v \frac{A(v)}{v(\tilde{\vp})} = \lct_x (\tilde{\vp}) $$
\noi where the inf's are taken over $v$,  quasimonomial valuations whose center contains $x$. 
  This inequality turns out to be equality by  (1). 
 \end{proof}

\subsection{Use of algebraic quasimonomial valuations}  {\ }

 In this subsection,  we will complete the proof of Theorem~\ref{computer} using the methods of \cite{JM14}. Even though Theorem~\ref{computer} is stated on a complex manifold, \cite{JM14} 
uses the algebraic setting of excellent regular local domains (given by the localization of the analytic  local ring $\OO_x$ at the ideal $I_V \cdot \OO_x$). 

Let us briefly recall the  algebraic notion of a valuation (cf. \cite[Section 2.1]{JM14}, \cite{JM12}). 
 Let $K$ be a field and let $G$ be a totally ordered abelian group (e.g. $G = \ZZ$ or $\RR$). A \emph{valuation} of $K$ with values in $G$ is a function $v: K \setminus \{ 0 \} \to G$ such that for every nonzero $x,y \in K$, one has $v(xy) = v(x) + v(y)$ and $v(x+y) \ge \min (v(x), v(y))$. 
  When $G$ is isomorphic to an ordered additive subgroup of $\RR$, one says that $v$ is a real valuation (or rank $1$ valuation). 
  
  For example, $K$ can be taken to be the function field of an algebraic variety over a field. In our situation, $K$ will be the field of fractions of an excellent regular local domain $R$, so that a valuation $v$ can be taken as $v: R \setminus \{ 0 \} \to \RR_{\ge 0}$.

Before the proof of Theorem~\ref{computer}, we recall from \cite[\S 2.2]{JM12}, \cite{JM14}: 
 a sequence of nonzero ideals $\mfbb = (\mfb_j)_{j \ge 0}$ in $R$ is said to be \emph{subadditive} if $\mfb_{i+j} \subset \mfb_i \cdot \mfb_j$ for every $i, j \ge 0$, cf.\ \cite[Sec.2.1.5]{JM14}. 
     The log canonical threshold $\lct (\mfbb)$ is defined as $ \inf_{j \ge 1} j \cdot \lct(\mfb_j)$ \cite[\S 2.1.5]{JM14}. This also has the valuative characterization \cite[(2.5)]{JM14} 
  \begin{equation}\label{lctb}
   \lct (\mfbb) = \inf_v \bigl({A(v)}/{v(\mfbb)}\bigr), 
  \end{equation}
 \noi where the infimum is taken over all valuations $v$ on $R$ with $A(v) < \infty$.

  \begin{proof}[Proof of Theorem~\ref{computer}]
 
 We will use the arguments in \S 4.2 of  \cite{JM14}, which we recall for the sake of explicitness and convenience for readers.

 \emph{Step 1}.  Let $\OO_x$ be the analytic local ring of holomorphic function germs at $x \in X$, which is an excellent regular local ring (cf.\ \cite[Thm.102]{M80}). 
 Take $R = \OO_{x, Z}$ to  be the localization of $\OO_x$ at the ideal $I_Z \cdot \OO_x$, which is also an excellent regular local ring.
 
 We will later apply Lemma~\ref{vpp}. Since $(X, \vp)$ is log canonical, $\lct_y (\vp) \ge 1$ at every point $y \in X$. Also note that $\lct_x (\vp)= 1$ since $x \in Z$.  Hence we have $Z = V$ near $x$ where $V = V_x$ is the sublevel set in Lemma~\ref{vpp}. Take $\tilde{\vp} = \max (\vp, p \log \abs{I_Z})$ for $p \gg 0$. 
 
  Let $\widetilde{\mfbb}$ be the subadditive system of ideal sheaves given by $\widetilde{\mfb}_m = \JJ(m\tilde{\vp})$. 
Let $\mfbb$ denote the induced subadditive system $\widetilde{\mfbb} \cdot \OO_{x, V}$ in $R$.  
    From \cite[Prop.3.12(i)]{JM14}, we have $\lct_x (\tilde{\vp}) = \lct (\mfbb)$. 
 Then by Theorem~\ref{JMX}, there exists an algebraic quasimonomial valuation $v_0$ on $R$ that computes $\lct (\mfbb)$. 
Note that the condition $\mfm^{pj} \subset \mfb_j$ holds due to the definitions of $\tilde{\vp}$ and $R, \mfm$ and the analytic nullstellensatz~\cite[Chap 4, \S 1]{GR84}. 
\\

\emph{Step 2}. Let us show that $v_0$ can be converted into a geometric quasimonomial valuation $v_1$ (defined by the same geometric data). This is the content of \cite[Secs.3.3 and  2.2.1]{JM14}, which we recall here.

 Given $v_0$, let $\pi: X \to \Spec \OO_{x, V} $ be a projective birational morphism adapted to $v_0$ in the sense of \cite[Sec.2.1.1]{JM14}, say with an snc divisor $D_1 + \ldots + D_m$ on $X$. From its projectivity, $\pi$ is equal to the composition of a closed embedding 
 $$X  \hookrightarrow \Spec \OO_{x,V} \times_{\Spec \CC} 
 \PP^N_{\CC} $$ and then the first projection to $\Spec \OO_{x,V}$. In this embedding, $X$ is defined by a finite number of homogeneous polynomial equations with (a finite number of) coefficients in $\OO_{x, V}$, say of the form $\frac{f_j}{g}$ where $f_j \in \OO_x$ and $g \in \OO_x \setminus I_V \cdot \OO_x$. Let $U$ be an open neighborhood of $x$ where these germs $g$ and $f_j$'s are defined as actual holomorphic functions.  Let $W \subset U$ be the analytic subset given by $(g=0)$.

  One can define  the analytification of $\pi$ as a proper modification $\pi^{an} : X^{an} \to U \setminus W$ as the composition of the inclusion $X^{an} \to (U \setminus W) \times \PP^N_{\CC}$ defined by the same corresponding equations as above and the first projection.  Then as discussed in \cite[Sec.3.3]{JM14}, by shrinking $U$ and increasing $W$ if necessary, one obtains a geometric quasimonomial valuation $v_1 := \tau_{Z^{an}, D^{an}, \al}$ where $D^{an} = D^{an}_1 + \ldots + D^{an}_m$ is a suitable analytification of the original snc divisor $D_1 + \ldots + D_m$ and $Z^{an}$ is the transversal intersection of the components. 
 We have $A(v_0) = A(v_1)$ since they are defined by the same data of snc divisors. 
\\

\emph{Step 3}.
 Let us check that this $v_1$ is what we want.  Note that $v_1$  computes $\lct_x (\tilde{\vp})$ (which is equal to $\lct (\mfbb)$ from Step 1) in the sense of Definition~\ref{computer1} since \cite[Prop.3.12]{JM14} showed $v_1 (\tilde{\vp}) = v_0 ( \mfbb)$ using Demailly approximation. 
Therefore we have $$ \lct_x (\tilde{\vp}) = \lct (\mfbb) =  \frac{A(v_0)}{v_0(\mfbb)} = \frac{A(v_1)}{v_1(\tilde{\vp})}, $$ where the second equality is due to the fact that $v_0$ computes $\lct (\mfbb)$.  
 By Lemma~\ref{vpp}, $v_1$ also computes $\lct_x (\vp) = 1$.

  Now consider the center of $v_1$. Since the center of $v_0$ is given by a prime ideal in the local ring $O_{x, Z}$, the center of $v_1$ is a subvariety (say $W$) containing $Z$ near $x$. We claim that $W=Z$. Suppose that $W \neq Z$. Since $A(v_1) - v_1 (\vp) = 0$, we get contradiction from  the fact that $Z$ is an irreducible component of the non-klt locus $W(\vp)$ and thus $Z$ cannot be properly contained in another analytic subset $W \subset W(\vp)$. 
 \end{proof}

\begin{corollary}\label{algeq}

 If $X$ in Theorem~\ref{computer} is one associated to a smooth complex projective variety, then the quasimonomial valuation $v_1$ in the statement of Theorem~\ref{computer} can be taken to be an algebraic quasimonomial valuation of the function field of $X$. 
 \end{corollary}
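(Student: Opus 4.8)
The plan is to re-run the proof of Theorem~\ref{computer}, observing that in the projective setting every ingredient is algebraic by Serre's GAGA theorem, so that the Jonsson--Musta\c{t}\v{a}--Xu valuation produced there is already a valuation of the function field $\CC(X)$.

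First, since $X$ is the analytification of a smooth complex projective variety, GAGA identifies coherent analytic and algebraic sheaves on $X$. Hence the multiplier ideal $\JJ(\vp)$ is an algebraic coherent ideal, so its zero set $W(\vp)$ is a closed subscheme of the variety $X$, its irreducible component $Z$ is a subvariety, and $I_Z$ is algebraic; by Step~1 of the proof of Theorem~\ref{computer} we have $V=Z$ near $x$, so $I_V$ is algebraic as well. Likewise each $\widetilde{\mfb}_m=\JJ(m\tilde{\vp})$, $\tilde{\vp}=\max(\vp,p\log\abs{I_Z})$, is a coherent, hence algebraic, ideal sheaf on $X$, so $\widetilde{\mfbb}=(\widetilde{\mfb}_m)_m$ is a subadditive system of algebraic ideals.

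Second, replace the analytic localization $\OO_{x,Z}$ by the algebraic local ring $R^{\rm alg}:=\OO^{\rm alg}_{X,Z}$ of $X$ at the generic point of $Z$, an excellent regular local domain with fraction field $\CC(X)$, and set $\mfbb^{\rm alg}:=\widetilde{\mfbb}\cdot R^{\rm alg}$. Since the analytic and algebraic log canonical thresholds of an algebraic ideal in a regular local ring agree (both are computed on a common algebraic log resolution, and $\lct$ of a subadditive system depends only on the ideals, cf.\ \cite[\S2]{JM14}), we get $\lct(\mfbb^{\rm alg})=\lct(\mfbb)=\lct_x(\tilde{\vp})$. Applying Theorem~\ref{JMX} to $R^{\rm alg}$ and $\mfbb^{\rm alg}$ yields an algebraic quasimonomial valuation $w_0$ of $\CC(X)$ computing $\lct(\mfbb^{\rm alg})$; spreading out and using Hironaka's theorem over the projective $X$, we may write $w_0=\tau_{Z_Y,D,\al}$ for a log-smooth pair $(Y,D)$ with $\mu\colon Y\to X$ projective birational and $Z_Y$ a stratum of $D$ dominating $Z$, i.e.\ $w_0$ is a genuine algebraic quasimonomial valuation of the function field of $X$.

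Finally, analytify globally: $\mu^{an}\colon Y^{an}\to X^{an}$ is a proper modification of complex manifolds, $D^{an}$ is snc, and $v_1:=\tau_{Z_Y^{an},D^{an},\al}$ is a geometric quasimonomial valuation with $A(v_1)=A(w_0)$, since the formula \eqref{Avv} involves only $\al$ and $K_{Y/X}$, which is unaffected by analytification. Exactly as in Step~3 of the proof of Theorem~\ref{computer}, \cite[Prop.\ 3.12]{JM14} gives $v_1(\tilde{\vp})=w_0(\mfbb^{\rm alg})$, whence
$$\lct_x(\tilde{\vp})=\lct(\mfbb^{\rm alg})=\frac{A(w_0)}{w_0(\mfbb^{\rm alg})}=\frac{A(v_1)}{v_1(\tilde{\vp})},$$
so $v_1$ computes $\lct_x(\tilde{\vp})$, hence $\lct_x(\vp)$ by Lemma~\ref{vpp}, and its center on $X$ is $Z$ by the irreducibility argument already used in Step~3. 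Thus this $v_1$ satisfies all the conclusions of Theorem~\ref{computer} and, being the analytification of $w_0$, is an algebraic quasimonomial valuation of $\CC(X)$.

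The one point requiring care is the comparison $\lct(\mfbb^{\rm alg})=\lct_x(\tilde{\vp})$, i.e.\ that passing from the analytic to the algebraic localization does not change the log canonical threshold of the subadditive system; this rests on the equality of analytic and algebraic log canonical thresholds for algebraic ideals (equivalently, on the agreement of the relevant completions). Granting this, the rest is a routine transcription of the proof of Theorem~\ref{computer}.
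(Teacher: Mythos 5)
Your proposal is correct and follows essentially the same route as the paper: replace the analytic localization by the algebraic local ring at (the generic point of) $V=Z$, use projectivity/GAGA to see that the sublevel set $V$ and the multiplier ideals $\JJ(m\tilde{\vp})$ are algebraic coherent sheaves, apply Theorem~\ref{JMX} in that algebraic setting, and conclude because the fraction field of the resulting local ring is $\CC(X)$. The only differences are presentational — you spell out the GAGA identifications, the comparison of analytic and algebraic log canonical thresholds, and the analytification Steps 2–3, which the paper leaves implicit.
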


 \begin{proof} 
 
 It suffices to follow Step 1 of the above proof of Theorem~\ref{computer} taking $\OO_x$ to be the algebraic local ring at $x \in X$ with necessary modifications.
 
  Since $X$ is projective, the sublevel set $V$ is a global algebraic subset whose reduced ideal sheaf is taken to be $I_V$. 
 In Lemma~\ref{vpp}, we can define $\log \abs{I_V}$ as a well-defined quasi-psh function on $X$ (which is equal to $\log \sum \abs{g_j}$ up to $O(1)$ in terms of local generators) using an ample line bundle $L$ such that $L \otimes I_V$ globally generated. Define a quasi-psh function as before, $\tilde{\vp} := \max \{ \vp, p \log \abs{I_V} \}$ with the same conclusion of Lemma~\ref{vpp}. 
  
   In the proof of   Theorem~\ref{computer},  take $\OO_x$ to be the algebraic local ring at $x \in X$ of the algebraic variety $X$ and take $R = \OO_{x, V}$ to be the localization taken with respect to the above $I_V$. Take $\tilde{\mfb}_m := \JJ(m \tilde{\vp})$ which is a coherent ideal sheaf of the projective variety $X$. From Theorem~\ref{JMX}, we obtain an algebraic quasimonomial valuation $v_0$ on $R$ that computes $\lct (\mfbb)$ as before. 
 Since the field of fractions of $R$ is equal to the function field of $X$, this $v_0$ is what we want. 
 \end{proof} 
 
 In the rest of this subsection, for the convenience of readers, we provide the statements from \cite{JM14} and \cite{X20} which were  used above when $R = \OO_{x, V}$.

\begin{theorem}\label{JMX} 
 Let $R$ be an excellent regular local domain of equicharacteristic zero. 
 Let $\mfbb$ be a subadditive sequence of ideals in $R$ satisfying the controlled growth condition~\cite[p.121]{JM14}, i.e. for all quasimonomial valuations $v$ on $R$ and all $j \ge 1$, one has $$
 \frac{1}{j} v(\mfb_j) \le v (\mfbb) \le \frac{1}{j} (v(\mfb_j) + A(v)) .$$
 Also suppose that there exists an integer $p \ge 1$ such that  $\mfm^{pj} \subset \mfb_j$ for all $j \ge 1$ for the maximal ideal $\mfm$ or $R$. 
 Then there exists an algebraic quasimonomial valuation $v_0$ on $R$ such that $v_0$ computes $\lct (\mfbb)$.

\end{theorem}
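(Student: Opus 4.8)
The statement is the union of two inputs: the existence of a valuation attaining the infimum in \eqref{lctb}, due to Jonsson--Musta\c{t}\v{a} \cite{JM14}, and the fact that such a minimizer may be taken quasimonomial, which is Xu's theorem \cite{X20} settling the conjecture of \cite{JM14}. The plan is to assemble these two inputs.

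\emph{Step 1 (setup and reductions).} I would start from \eqref{lctb}, $\lct(\mfbb) = \inf_v A(v)/v(\mfbb)$, the infimum running over real valuations $v$ of the fraction field of $R$ with center on $R$ and $A(v) < \infty$. For an excellent regular local domain of equicharacteristic zero resolution of singularities is available (Hironaka, Temkin), so the generic-limit construction of \cite{JM14} and the minimal-model-program arguments of \cite{X20} apply in the required generality. The hypothesis $\mfm^{pj}\subset\mfb_j$ gives $v(\mfbb) \le p\, v(\mfm)$ for every valuation $v$; hence any $v$ with $v(\mfbb) > 0$ is centered at the closed point $\mfm$, and only such $v$ are relevant to the infimum. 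Since $A(v)/v(\mfbb)$ is invariant under rescaling of $v$, one may normalize by $v(\mfbb) = 1$ and reformulate the problem as: minimize $A(v)$ over valuations centered at $\mfm$ with $v(\mfbb) = 1$.

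\emph{Step 2 (existence of a minimizer).} The infimum in \eqref{lctb} is unchanged if $v$ is restricted to divisorial valuations, since each $\lct(\mfb_j)$ is computed on a log resolution of $\mfb_j$ and $\lct(\mfbb) = \inf_j j\cdot\lct(\mfb_j)$. I would then choose a minimizing sequence of divisorial valuations $v_k$, normalized so that $v_k(\mfbb) = 1$, with $A(v_k) \to \lct(\mfbb)$, and apply the generic-limit construction of \cite{JM14} (building on Boucksom--Favre--Jonsson) to extract from $(v_k)$ a valuation $v_*$, centered at $\mfm$, with $A(v_*) \le \liminf_k A(v_k)$ and $v_*(\mfbb) \ge \limsup_k v_k(\mfbb) = 1$; the controlled growth condition is precisely what makes $v \mapsto v(\mfbb)$ behave well under this limit, and the $\mfm$-condition is what keeps the limit from degenerating. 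It follows that $A(v_*)/v_*(\mfbb) \le \lct(\mfbb)$, so $v_*$ computes $\lct(\mfbb)$.

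\emph{Step 3 (quasimonomiality --- the main obstacle).} It remains to arrange that the minimizer is quasimonomial. When $\dim R = 2$ this already follows from the structure of the Favre--Jonsson valuative tree, as carried out in \cite{JM14}. In general this is the content of Xu's theorem \cite{X20}: the infimum $\inf_v A(v)/v(\mfbb)$ is attained by a quasimonomial valuation. The proof there runs the minimal model program over $\Spec R$ to extract a Koll\'ar component, degenerates a minimizer to a quasimonomial valuation without increasing the invariant (using boundedness of complements for the needed uniformity), and iterates; showing that the resulting valuation is genuinely quasimonomial and still achieves $\lct(\mfbb)$ is the crux, and I expect it to be by far the hardest step. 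Granting it, the valuation produced is an algebraic quasimonomial valuation by construction, which is the assertion of the theorem; in particular the case $R = \OO_{x,V}$ used in this paper is covered, since $\OO_{x,V}$ is an excellent regular local domain of equicharacteristic zero and the hypotheses on $\mfbb$ (subadditivity, controlled growth, and $\mfm^{pj}\subset\mfb_j$) were arranged in the proof of Theorem~\ref{computer}.
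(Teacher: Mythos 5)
Your proposal identifies the right external input (Xu's theorem) but omits the one step that constitutes the paper's actual proof: the reduction from subadditive systems to graded systems. Theorem~\ref{Xu} (Xu's result, i.e.\ Conjecture C'' of \cite{JM14}) concerns \emph{graded} systems $\mfab$, for which $\mfa_i\cdot\mfa_j\subset\mfa_{i+j}$ and $\lct(\mfab)$ is the limit of the increasing sequence $j\cdot\lct(\mfa_j)$. The object in Theorem~\ref{JMX} is a \emph{subadditive} system $\mfbb$, where the inclusions go the other way ($\mfb_{i+j}\subset\mfb_i\cdot\mfb_j$) and $\lct(\mfbb)=\inf_{j}j\cdot\lct(\mfb_j)$. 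In your Step~3 you invoke Xu's theorem as if it directly asserted that $\inf_v A(v)/v(\mfbb)$ is attained by a quasimonomial valuation; it does not say this for subadditive systems. The statement you need is Conjecture~E of \cite{JM14}, and the paper's proof is exactly the citation chain: Conjecture~E follows from Conjecture~C'' by \cite[Prop.~2.3]{JM14}, and Conjecture~C'' is Xu's theorem. That reduction --- which is where the controlled growth condition and the hypothesis $\mfm^{pj}\subset\mfb_j$ actually get used --- is the missing content of your argument; without it the deduction from \cite{X20} does not go through.

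A secondary point: your Step~2 (extracting a minimizer $v_*$ by a generic-limit construction) is a detour that does not feed into the conclusion. The limit valuation so produced is not known to be quasimonomial, and you supply no mechanism for replacing it by a quasimonomial one; in Step~3 you simply appeal to Xu's theorem again for existence, which makes Step~2 superfluous. Neither the paper nor the implication ``C''$\Rightarrow$E'' in \cite{JM14} proceeds by first building a possibly non-quasimonomial minimizer.
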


 \begin{proof}
 
 This is contained in Conjecture E of \cite{JM14}. By \cite[Prop.2.3]{JM14}, this follows from Conjecture C'' of \cite{JM14}, proved by  Xu~\cite{X20}; see  Theorem~\ref{Xu}.  
 \end{proof}

 Given a  graded system of ideals $\mfab$, an algebraic version of log canonical threshold $\lct (\mfab)$
 is defined as   $ \lct (\mfab) := \ds \inf_v\bigl( {A(v)}/{v(\mfab)}\bigr) $ similarly to the case of a subadditive system \eqref{lctb}. (See \cite{JM12} for more details on $v(\mfab)$.) 
  
 \begin{theorem}\label{Xu} \cite[Thm.1.1]{X20}
 Let $X$ be a smooth complex algebraic variety and let $\mfab$ be a graded system of ideals on $X$ with $\lct (\mfab) < \infty$. Then  there exists a quasimonomial valuation that computes $\lct (\mfab)$. \qed
 
 \end{theorem}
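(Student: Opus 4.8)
The plan is to obtain the required valuation as a limit of a minimizing sequence of \emph{divisorial} valuations, the essential point being to trap that sequence, after passing to a subsequence, inside one finite-dimensional cone $\QM_Z(Y,D)$, where everything is coordinatized and taking limits is harmless. Write $c:=\lct(\mfab)=\inf_v A(v)/v(\mfab)$. A minimizing sequence of divisorial valuations is available for free: if $v_m$ is a divisorial valuation computing $\lct(\mfa_m)$ (extract one from a log resolution of $\mfa_m$), then $c\le A(v_m)/v_m(\mfab)\le A(v_m)/(\tfrac1m v_m(\mfa_m))=m\,\lct(\mfa_m)\to c$; after rescaling we may assume $A(v_m)=1$, so that $v_m(\mfab)\to 1/c$. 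It then suffices to produce a quasimonomial valuation $v_\infty$ arising as a limit of a subsequence of the $v_m$ with $A(v_\infty)=1$ and $v_\infty(\mfab)\ge 1/c$: since $A(v)/v(\mfab)\ge c$ for every valuation, this forces $A(v_\infty)/v_\infty(\mfab)=c$, i.e.\ $v_\infty$ computes $\lct(\mfab)$.

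The heart of the matter is to show the $v_m$ come from a bounded family of birational models; this is where the minimal model program enters, and a limit of the $v_m$ taken blindly in the space of all valuations would be useless, as it could be infinitely singular rather than quasimonomial. Since $v_m$ nearly computes a log canonical threshold, one produces, using general members of $\mfa_k$ for $k\gg 0$, effective $\QQ$-divisors $\Delta_m\ge 0$ with $(X,\Delta_m)$ log canonical and $A_{X,\Delta_m}(v_m)\to 0$, so $v_m$ is, up to an error tending to $0$, a log canonical place. Extracting $v_m$ by a relative minimal model program yields $\mu_m\colon Y_m\to X$ with a Koll\'ar component $E_m$ (so $(Y_m,E_m)$ is plt and $-(K_{Y_m}+E_m)$ is $\mu_m$-ample); by Birkar's boundedness of complements, $(Y_m,E_m)$ carries an $N$-complement with $N=N(\dim X)$ fixed, and boundedness of Koll\'ar components then confines the $(Y_m,E_m)$ to a log bounded family, realized over $X$ by finitely many log smooth models. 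A Noetherian induction on the bases of that family extracts a subsequence along which every $v_m$ is a monomial valuation adapted to one fixed log smooth model $(Y,D)$ with one fixed stratum $Z$; write $v_m=\tau_{Z,D,\al_m}$ with $\al_m\in\QQ^J_{\ge 0}$. This step --- passing from an approximate $\lct$ minimizer to an honest log canonical place of a bounded complement, and invoking boundedness of complements and of Koll\'ar components --- is the main obstacle, and one must also check that the $o(1)$ discrepancy errors incurred in replacing $\mfab$ by $\Delta_m$ do not spoil the bounded-family conclusion.

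It remains to pass to the limit. On $\QM_Z(Y,D)\cong\RR^J_{\ge 0}$ the log discrepancy $\al\mapsto A(\tau_{Z,D,\al})=\tsum_i\al_i A(\ord_{D_i})$ is a linear functional with strictly positive coefficients (as $X$ is smooth), so the slice $\{A=1\}$ is compact; passing to a further subsequence, $\al_m\to\al_\infty\ne 0$, and $v_\infty:=\tau_{Z,D,\al_\infty}$ is a genuine quasimonomial valuation with $A(v_\infty)=\lim_m A(v_m)=1$. For each $k$ the map $\al\mapsto v_\al(\mfa_k)$ is continuous (a minimum of finitely many linear functions), so $v\mapsto v(\mfab)=\inf_k\tfrac1k v(\mfa_k)$ is upper semicontinuous on the cone; hence $v_\infty(\mfab)\ge\limsup_m v_m(\mfab)=1/c$, and by the discussion above $v_\infty$ computes $\lct(\mfab)$.
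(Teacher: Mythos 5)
The first thing to note is that the paper does not prove this statement at all: it is quoted verbatim from Xu's theorem \cite[Thm.1.1]{X20} and used as a black box (hence the tombstone with no proof environment). So your proposal is an attempt to reprove a substantial external theorem, and it must be measured against Xu's actual argument. Your outline does identify the right ingredients --- a minimizing sequence of divisorial valuations, boundedness of $N$-complements, and a limit taken inside a finite-dimensional cone $\QM_Z(Y,D)$ --- but two steps are genuinely broken or missing. The smaller one is in your opening paragraph: for a \emph{graded} system ($\mfa_i\cdot\mfa_j\subset\mfa_{i+j}$) the sequence $k\mapsto v(\mfa_k)$ is subadditive, so $v(\mfab)=\inf_k\tfrac1k v(\mfa_k)\le\tfrac1m v(\mfa_m)$; your inequality $v_m(\mfab)\ge\tfrac1m v_m(\mfa_m)$ is backwards, and your chain only yields the useless lower bound $A(v_m)/v_m(\mfab)\ge m\,\lct(\mfa_m)$. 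To get an honest minimizing sequence one must pass to the subadditive system $\mfb_m=\JJ(\mfab^m)$, where the inequalities reverse, and invoke the controlled-growth estimate $\tfrac1j v(\mfb_j)\le v(\mfbb)\le\tfrac1j\bigl(v(\mfb_j)+A(v)\bigr)$ --- precisely the hypothesis built into Theorem~\ref{JMX}. This is standard Jonsson--Musta\c{t}\v{a} machinery, but as written your reduction is false.

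The serious gap is the step you yourself flag as ``the main obstacle'': trapping a subsequence of the $v_m$ inside one fixed cone $\QM_Z(Y,D)$. The mechanism you invoke does not deliver this. Boundedness of complements bounds the index $N$ and hence the linear system in which the complements $\Delta_m^+$ move; it says nothing about the birational models carrying the $v_m$. The lc places of the \emph{varying} pairs $(X,\Delta_m^+)$ have no reason to be toroidal with respect to finitely many log smooth models over $X$, and ``boundedness of Koll\'ar components'' is a statement about klt singularities of bounded volume in the K-stability literature, not applicable here. Xu's proof is organized in the opposite order: he first extracts a limit $v_\infty$ of the normalized $v_m$ in the compact space of valuations (where a priori $v_\infty$ is just a valuation, possibly infinitely singular), checks that $v_\infty$ still computes $\lct(\mfab)$ via the controlled-growth estimates, and only then proves --- by a generic-limit/Noetherian-induction argument on the bounded family of $N$-complements --- that $v_\infty$ is an lc place of a single $\q$-complement; quasimonomiality follows because the lc places of one fixed lc pair form the cone over the dual complex of a dlt modification. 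Your final limiting paragraph is correct, but only conditionally on this unproved trapping step, so the proposal as it stands does not constitute a proof.
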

 
 In fact, in \cite{X20}, this was proved more generally for a klt pair $(X, \Delta)$.

\section{Almost log canonical centers}\label{sec.6.jk}

The aim of this section is to show that once a log canonical pair
$(X, \Delta)$ is  `close to being non-klt' along a Zariski closed subset $W\subset X$, we can find another  log canonical pair
$(X, \Delta^*)$ that is not klt along $W$; see
Theorem~\ref{near.lc.=.lc.q.thm}. To state it,  we need some definitions first to clarify the meaning of `close to being non-klt.'
The right notion is given by the minimal log discrepancy  (Definition~\ref{mld}) and the 
log canonical gap.

\begin{defn}\label{epsilon.defn}
  The \textbf{\emph{log canonical gap}} in dimension $n$, denoted by $\lcg(n)$, 
    is the smallest real number  $\epsilon$
  with the following property.
  \begin{enumerate}
\item Let  $(X, \sum d_iD_i)$ be a  $\q$-factorial, lc pair with $\dim X=n$. Assume that 
$0\leq 1-d_i<\epsilon$ for every $i$. Then $(X, \sum D_i)$ is also lc.
  \end{enumerate}
\end{defn} 
  It is easy to see that $\lcg(2)=\frac16$ and $\lcg(3)=\frac1{42}$ by \cite[5.5.7]{Ko94}.
  A  difficult theorem  \cite[Thm.1.1]{HMX14} says that $\lcg(n)$  is positive  for every $n$. No explicit lower bound is known, but
  $\lcg(n)$ converges to 0 very rapidly  \cite[8.16]{Ko97}.

  \smallskip
      {\it Comments on the references.}
      Even though we will use Theorem~\ref{near.lc.=.lc.q.thm} for a complex manifold $X$ in Theorem~\ref{main}, in the proof    we need to use $\lcg(n)>0$ for a  proper modification $Y\to X$, and $Y$ is usually quite singular. So we do need the full generality of \cite[Thm.1.1]{HMX14}, the smooth version  \cite{DEM10}  would not be enough. 

      Note also that \cite{HMX14} works with algebraic varieties.
      However, the proof in \cite[Sec.5]{HMX14} reduces the claim to a similar assertion about the fibers of a dlt modification. Thus, once we know that relatively projective  dlt modifications exist by \cite{F2201, LM22}, the proof in \cite{HMX14} works for analytic spaces as well.

\begin{assumption}\label{f.t.ass} In Theorem~\ref{near.lc.=.lc.q.thm} we work in one of the following settings.
     \begin{enumerate}
     \item  Quasi-projective varieties (say $X$) over a field of characterisic 0.
     \item  Schemes (say $X$) that are quasi-projective over a quasi-excellent, affine $\q$-scheme.
       \item   Complex analytic spaces (say $X$) that are projective over a  Stein space $S$ (with $\pi: X \to S$). 
       \end{enumerate}
The main reason for these restrictions is that we need line bundles with general enough sections. In particular, if $X$ satisfies one of (1--3) then the following holds.
 \begin{enumerate}\setcounter{enumi}{3}
 \item Let $g:Y\to X$ be a proper morphism and $L_Y$ a line bundle on $Y$, such that, every $x\in X$ has an open neighborhood  $x\in U\subset X$ for which $L_Y|_{g^{-1}(U)}$ is globally generated. Then, the following hold. 
 
 \begin{itemize}
 
 \item In cases (1--2)  there is a  line bundle $L_X$ on $X$ such that  $L_Y\otimes g^*L_X$  is globally generated.
 
 \item
   In case (3),  there is a holomorphic line bundle $L_X$ on $X$ such that, every compact subset $C\subset S$ has a
   Stein open neighborhood $C\subset S^\circ \subset S$
satisfying that  $L_Y \otimes g^*L_X$  is globally generated over $(\pi \circ g)^{-1} ( S^\circ)$.
  
  \end{itemize}

    \end{enumerate}
    
Throughout the proof of Theorem~\ref{near.lc.=.lc.q.thm}, we use  several results about singularities of algebraic varieties and minimal models, whose proofs originally used global methods.
Many of these need generalizations of Kodaira's vanishing theorem.

Proofs of the required vanishing theorems are given in \cite{T85} for analytic spaces and in \cite{Mu21} for quasi-excellent schemes.
 \cite{F2201} 
establishes  the Minimal Model Program over complex analytic  bases, and 
\cite{LM22}  over  quasi-excellent base schemes.
Once the basics are established, the theorems of the algebraic MMP work in the complex analytic setting with 2 caveats.
   \begin{enumerate}\setcounter{enumi}{4}
   \item We work with projective morphisms  $\pi:X\to B$ where $B$ is Stein.
   \item  We fix a compact subset $C\subset B$, and the conclusions apply not to the whole $X$,  but to  $\pi^\circ:X^\circ:=\pi^{-1}(B)\to B^\circ$ for some  Stein open subset $B^\circ$ such that $ C\subset B^\circ \subset B$.
   \end{enumerate}
   Note that (5) is quite important : there are serious issues with MMP even for proper (but not projective) morphisms between varieties.
   By contrast one can  avoid (6) in many cases. \cite{F2201}  works essentially with the germ of $B$ around $C$; this requires some easy to satisfy mildness assumptions on $C$, see \cite[p.3]{F2201}. 
Otherwise, it is sometimes possible to work with an increasing sequence of $C_i\subset B$ and pass to the limit. However, Theorem~\ref{near.lc.=.lc.q.thm} involves choices, and it is not clear that working over the whole base is possible.

   Thus we state Theorem~\ref{near.lc.=.lc.q.thm} precisely, but in the auxiliary Lemmas~\ref{mmp.on.crep.gen.lem}--\ref{complem.model.prop} we ignore passing to $X^\circ\subset X$, since this would make the notation very cumbersome.
\end{assumption}

\begin{thm}\label{near.lc.=.lc.q.thm}
  Let $(X, \Delta)$ be a klt pair  of dimension $n$ with $X$ satisfying one of the Assumptions~(\ref{f.t.ass}.1--3).
   Let $W\subset X$ be a closed subset (a closed analytic subset in (\ref{f.t.ass}.3))  such that   $\mld(W, X,\Delta)<\lcg(n)$.
 Then the following hold in the respective cases. 
\begin{enumerate}
\item  In the algebraic cases  (\ref{f.t.ass}.1--2), there is an  lc  pair
  $(X, \Delta^*)$ such that  $W$ is equal to the union of all lc centers of  $(X, \Delta^*)$.
  \item In the analytic case  (\ref{f.t.ass}.3) with $\pi: X \to S$, for every compact subset $C\subset S$, there exist
  
 \begin{itemize}
 \item
   a Stein space $S_C$ such that  $C\subset S_C \subset S$, and
   
   \item 
   an  lc  pair
  $(X_C, \Delta^*_C)$ where $X_C := \pi^{-1} (S_C)$  such that the union of all lc centers of  $(X_C, \Delta^*_C)$ is equal to $W\cap X_C$. 
  \end{itemize}
  \end{enumerate}
\end{thm}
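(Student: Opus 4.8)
The plan is to find, after a suitable $\q$-factorial dlt modification $g\colon Y\to X$, a boundary $\Delta_Y$ with $K_Y+\Delta_Y\equiv_X g^*(K_X+\Delta)$ and a divisor $F\subset Y$ (a union of exceptional prime divisors of $g$, possibly together with components of the birational transform of $\Delta$) with small log discrepancies along the preimage of $W$, then to run a carefully chosen MMP over $X$ to make those discrepancies drop to $0$ while keeping the pair lc, and finally to push the resulting boundary down to $X$. First I would fix the compact set $C\subset S$ and pass to $X^\circ=\pi^{-1}(S^\circ)$ as in Assumption~\ref{f.t.ass}; after this all subsequent statements are understood over the germ of $S$ around $C$, and I will suppress this from the notation, following the convention stated before the theorem. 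Since $(X,\Delta)$ is klt, a relatively projective $\q$-factorial dlt (indeed, since $X$ is klt, terminal-in-codimension-one) modification $g\colon Y\to X$ exists by \cite{F2201, LM22}; write $K_Y+\Delta_Y=g^*(K_X+\Delta)$ with $\Delta_Y=\sum d_i D_i$ effective, where the $D_i$ include all $g$-exceptional divisors with their log discrepancies $1-d_i=A(D_i,X,\Delta)\ge 0$, and $g^{-1}(W)$ together with the $D_i$ forms the relevant stratification.

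The key step is the following. Let $E\subset Y$ be a prime divisor with $c_X(E)=$ an irreducible component $Z$ of $W$ realizing $\mld(W,X,\Delta)=A(E,X,\Delta)=:\delta<\lcg(n)$; by taking $g$ to factor through the blow-up extracting $E$ we may assume $E=D_{i_0}$ is one of the boundary components on $Y$, with $d_{i_0}=1-\delta>1-\lcg(n)$. Now I would raise the coefficients: replace $\Delta_Y$ by $\Delta_Y'$, obtained by increasing each $d_i$ up to (but not exceeding) $1$ on all prime divisors whose center on $X$ lies in $W$ — equivalently, write $\Delta_Y'=\Delta_Y+\sum_{c_X(D_i)\subset W}(1-d_i)D_i$, so that along $g^{-1}(W)$ the pair $(Y,\Delta_Y')$ has all relevant coefficients equal to $1$ and is still lc by the very definition of $\lcg(n)$, since $0\le 1-d_i<\lcg(n)$ for each such $i$ (using $\q$-factoriality of $Y$). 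The point of Definition~\ref{epsilon.defn} is exactly that $(Y,\Delta_Y')$ is lc. Now $K_Y+\Delta_Y'=g^*(K_X+\Delta)+G$ where $G=\sum_{c_X(D_i)\subset W}(1-d_i)D_i\ge 0$ is $g$-exceptional (each such $D_i$ must be $g$-exceptional, as components of the birational transform of $\Delta$ have center $\operatorname{Supp}\Delta\not\subset W$ a priori — here one may need to first enlarge $\Delta$ slightly, or argue component-by-component). So $G$ is $g$-exceptional, effective, and $-G$ is $g$-nef is \emph{not} automatic; instead I would run the $(K_Y+\Delta_Y')$-MMP over $X$. Since $K_Y+\Delta_Y'\equiv_X G\ge 0$ and $G$ is $g$-exceptional, this MMP contracts exactly $\operatorname{Supp}G$ (by the negativity lemma it terminates with $G$ contracted), and the output is a $\q$-factorial lc pair $(X,\Delta^*)$ — the pushforward of $\Delta_Y'$ — with $K_X+\Delta^*=K_X+\Delta$ plus an effective exceptional correction that is contracted, hence $\Delta^*$ is the direct image and $(X,\Delta^*)$ is lc. Crucially, the divisors $D_i$ with $c_X(D_i)\subset W$ now appear with log discrepancy $0$, so their centers on $X$ — which are components of $W$ or strata obtained by intersecting them — are lc centers of $(X,\Delta^*)$; conversely, lc places of $(X,\Delta^*)$ all have center in $W$ because outside $W$ the pair is still klt (log discrepancies stayed positive there). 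This yields that the union of lc centers of $(X,\Delta^*)$ equals $W$, which is the conclusion in case (\ref{f.t.ass}.1). Cases (\ref{f.t.ass}.2--3) follow identically, invoking the MMP of \cite{LM22}, resp.\ \cite{F2201}, over the quasi-excellent base, resp.\ over $S^\circ$; in the analytic case the dlt modification and the line bundles of Assumption~(\ref{f.t.ass}.4) are used to make the relevant divisors and sections global over $X_C=\pi^{-1}(S_C)$ for an appropriate Stein $S_C$ with $C\subset S_C\subset S$.

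The main obstacle, and where the real work of Section~\ref{sec.6.jk} lives, is twofold. First, controlling the MMP: one must choose the dlt modification and the boundary increment so that the $(K_Y+\Delta_Y')$-MMP over $X$ runs and terminates, that every step is $(K_Y+\Delta_Y')$-negative and stays lc, and that the contracted locus is precisely $\operatorname{Supp}G=g^{-1}(W)\cap\operatorname{Ex}(g)$ so that the pushed-down pair has its lc centers land exactly on $W$ and nothing extra. This is the content of the auxiliary Lemmas~\ref{mmp.on.crep.gen.lem}--\ref{complem.model.prop}, and it is where one needs the crepant structure ($K_Y+\Delta_Y\equiv_X 0$) together with positivity of the coefficient increment. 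Second, making sure $\Delta^*$ is \emph{effective} on $X$: a priori the pushforward of $\Delta_Y'$ could acquire the wrong sign, so one argues via the negativity lemma that the exceptional correction $G$ pushes to $0$ and $\Delta^*=g_*\Delta_Y'\ge 0$; and one must separately check that no lc center of $(X,\Delta^*)$ escapes $W$, for which the key input is $\mld(W,X,\Delta)<\lcg(n)$ forcing, on the dlt model, that the \emph{only} divisors that can be pushed to coefficient $1$ are those already centered in $W$. Everything else — existence of dlt modifications, vanishing theorems underpinning the MMP, positivity of $\lcg(n)$ — is quoted from \cite{F2201, LM22, HMX14, Mu21, T85} as recorded in Assumption~\ref{f.t.ass}.
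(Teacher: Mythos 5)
There is a genuine gap, and it sits at the heart of the construction. After raising the coefficients on $Y$ to get $\Delta_Y'=\Delta_Y+G$ with $K_Y+\Delta_Y'\sim_{g,\RR}G\geq 0$ and $G$ $g$-exceptional, you run the $(K_Y+\Delta_Y')$-MMP over $X$, i.e.\ the $G$-MMP, and you correctly observe that it must contract all of $\supp G$ (an effective exceptional divisor that is nef over the base is zero by the negativity lemma). But this is exactly the wrong thing to do: each step of that MMP is $(K+\Delta')$-negative, so the log discrepancy of every contracted component of $G$ strictly \emph{increases} past $0$, and on the final model one has $G^m=0$, hence $K_{Y^m}+(\Delta_Y')^m=(g^m)^*(K_X+\Delta)$ — you have undone the coefficient raise and recovered the crepant boundary of the original klt pair. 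Consistently with this, the pushforward of $\Delta_Y'$ to $X$ is just $g_*\Delta_Y+g_*G=\Delta+0=\Delta$, so your $\Delta^*$ equals $\Delta$, which is klt and has no lc centers at all. The claim that ``the divisors $D_i$ with $c_X(D_i)\subset W$ now appear with log discrepancy $0$'' is therefore false for the pair you actually produce. A second, related problem is that $(Y,\Delta_Y')$ is \emph{not} crepant over $X$ (its log canonical class is $\sim_{g,\RR}G\neq 0$), so even before any MMP one cannot simply push $\Delta_Y'$ down and conclude that $(X,g_*\Delta_Y')$ is lc, or even that $K_X+g_*\Delta_Y'$ is $\RR$-Cartier.

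The idea you are missing is the mechanism the paper uses to restore $\RR$-linear triviality over $X$ after rounding up. Writing $\Theta$ for the part of the boundary with coefficients in $(1-\lcg(n),1)$ and center in $W$, the paper sets $D:=\Theta+\Delta_Y-\rup{\Theta}$ (so $\supp D_{(<0)}=\supp\Theta$) and runs the $D$-MMP over $X$ (Lemmas~\ref{mmp.on.crep.gen.lem}--\ref{mmp.D.r.t.lem}); by Lemma~\ref{mmp.on.crep.gen.lem}(3) the image of $\supp\Theta$ is \emph{preserved}, not contracted, and at the end $D^m$ is nef over $X$, hence relatively semiample, so $D^m\sim_{f^m,\q}cH^m$ for a general member $H^m$ of a relatively free linear system (this is where Assumption~(\ref{f.t.ass}.4) enters). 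One then has
$$K_{Y^m}+\rup{\Theta^m}+cH^m\sim_{f^m,\RR}K_{Y^m}+\Theta^m+\Delta^m-D^m+cH^m\sim_{f^m,\RR}0,$$
so the rounded-up pair is again crepant over $X$ and descends to an lc pair $(X,f^m_*(\rup{\Theta^m}+cH^m))$ whose lc centers are exactly $f^m(\supp\Theta^m)=W$; the generality of $H^m$ keeps the pair klt away from $\supp\rup{\Theta^m}$. Without this exchange of the fractional part $D^m$ for a general $cH^m$, neither the descent to $X$ nor the survival of the lc places can be arranged, and your argument does not go through.
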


\begin{rem}
 Usually one cannot choose $\Delta^*$ (or $\Delta^*_C$) to satisfy  $\Delta^*\geq \Delta$, as shown by the 2-dimensional example
 $\bigl(\CC^2, \Delta:=(1-\eta)(x=0)+(1-\eta)(y=0)+\eta(x=y)\bigr)$.

 If we increase the coeffcients $1-\eta $ to $1$, we must eliminate the
 $\eta(x=y) $ summand. Thus, in the neighborhood of the origin, the only choice is
 $\Delta^*=(x=0)+(y=0)$. So $\Delta^*$ is locally unique.

 In most other cases, there are many choices for $\Delta^*$. It is not clear whether  some  are better than  others.
  \end{rem}

We start the proof of Theorem~\ref{near.lc.=.lc.q.thm} with some preliminary lemmas.

\begin{lem}\label{mmp.on.crep.gen.lem} Let $Z$ be as in  Assumption~\ref{f.t.ass} (i.e. taking its $X :=Z$) and 
   $f:Y\to Z$  a projective, bimeromorphic morphism (i.e. a modification). Let $D$ be a (not necessarily effective) divisor. Assume that the $D$-MMP over $Z$ runs and terminates.  
  Then, at the end we get  $f^m:Y^m\to Z$ such that 
  \begin{enumerate}
    \item $D^m$ is $f^m$-nef, 
    \item every step of the MMP is an isomorphism over $Z\setminus f(\supp D)$, and 
     \item $f(\supp D_{(<0)})=f^m(\supp D^m_{(<0)})$.
     \end{enumerate}
\end{lem}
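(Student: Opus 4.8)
The plan is to follow the $D$-MMP one step at a time and to record, measured downstairs on $Z$, which locus is being modified.

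Part (1) should require no work. Since we run the (minimal-model seeking) $D$-MMP over $Z$ and assume it terminates, by construction the output $f^m\colon Y^m\to Z$ carries the strict transform $D^m$ with $D^m$ $f^m$-nef; in the situations where we invoke the lemma $D$ is pseudo-effective over $Z$ (its negative part is $f$-exceptional), so no Mori fibre space can occur, and $Y^m$ is a genuine relative minimal model.

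For part (2) I would set $U:=Z\setminus f(\supp D)$ and observe first that $\supp D\cap f^{-1}(U)=\emptyset$, so $D$ restricts to $0$ on $f^{-1}(U)$ and there are no $D$-negative curves lying over $U$. Then I would argue by induction on the number of steps that every intermediate model $Y_i$ agrees with $Y$ over $f^{-1}(U)$ and that $D_i$ still restricts to $0$ there. For the inductive step, the extremal contraction $Y_i\to W_i$ of a $D_i$-negative ray $R$ (a divisorial contraction, or the flipping contraction) has exceptional locus covered by curves $C$ with $[C]\in R$, hence $D_i\cdot C<0$; each such $C$ is contracted to a point of $W_i$, and since $W_i\to Z$ is again bimeromorphic, $f_i(C)$ is a single point of $Z$. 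That point cannot lie in $U$, for otherwise $C$ would lie in a fibre over $U$ and $D_i\cdot C$ would vanish. Hence the modified locus maps into $f(\supp D)$ and the step is an isomorphism over $f^{-1}(U)$, which preserves both induction hypotheses. This also shows, more strongly, that the whole MMP is an isomorphism over $Z\setminus f(\supp D)$.

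Part (3) is where the real work lies, and I expect the reverse inclusion $f(\supp D_{(<0)})\subseteq f^m(\supp D^m_{(<0)})$ to be the main obstacle. The inclusion $f^m(\supp D^m_{(<0)})\subseteq f(\supp D_{(<0)})$ is immediate, since passing to strict transforms through the MMP preserves coefficients of prime divisors: the components of $D^m_{(<0)}$ are exactly the strict transforms of the surviving negative components of $D$, and they have the same image in $Z$ because $Y\dashrightarrow Y^m$ is over $Z$. For the hard inclusion I would combine two standard inputs: (a) negativity of the MMP — telescoping the elementary negativity estimate at each $D_i$-negative step over a common resolution $p\colon W\to Y$, $q\colon W\to Y^m$ (chosen to dominate every model of the MMP) yields $E:=p^*D-q^*D^m\ge 0$; and (b) $D^m$ is $f^m$-nef. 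Given a prime component $G$ of $D$ with coefficient $a<0$ that gets contracted during the MMP, its strict transform $\bar G\subseteq W$ is $q$-exceptional, with center $T':=q(\bar G)\subset Y^m$ of codimension $\ge 2$ and $f^m(T')=f(G)$; comparing the coefficient of $\bar G$ on the two sides of $p^*D=q^*D^m+E$ gives $\ord_{\bar G}(D^m)=(\text{coeff of }\bar G\text{ in }q^*D^m)\le a<0$. Expanding $\ord_{\bar G}(D^m)=\sum_{T'\subseteq D^m_j} b_j\,\ord_{\bar G}(D^m_j)$, where $D^m=\sum b_jD^m_j$ and every $\ord_{\bar G}(D^m_j)>0$, the negativity of the sum forces some $j$ with $b_j<0$ and $T'\subseteq D^m_j$, whence $f(G)=f^m(T')\subseteq f^m(D^m_j)\subseteq f^m(\supp D^m_{(<0)})$; components of $D$ not contracted by the MMP are handled trivially. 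I would also note that the $\q$-factoriality in force along the MMP is what allows one to pull back these $\RR$-divisors and read off coefficients, and that all the MMP ingredients used (negativity, termination, existence of the relevant contractions) are available in the analytic setting by \cite{F2201, LM22}.
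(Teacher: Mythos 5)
Your proof is correct. Parts (1) and (2) run exactly as in the paper: (1) is the definition of termination (and, as you note, no Mori fibre space can occur since every model remains bimeromorphic over $Z$), and (2) is the paper's observation that only $D_i$-negative curves are modified, so the modified locus must lie over $f(\supp D)$. For part (3) you take a genuinely different route. The paper argues step by step: the easy inclusion $f_{i+1}(\supp (D_{i+1})_{(<0)})\subset f_i(\supp (D_i)_{(<0)})$ is noted, and for the reverse one only the divisorial contractions matter; there, restricting to a general fiber $F_i$ of the contracted divisor $E_i$, the facts that $(D_i)_{(>0)}|_{F_i}$ is effective and $-E_i|_{F_i}$ is nef, while $D_i\cdot F_i<0$, force some \emph{other} negative component of $D_i$ to meet $F_i$, so its image after contraction still covers $f_i(E_i)$. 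You instead make a single global argument: telescoping the negativity lemma over a common resolution $p,q\colon W\to Y, Y^m$ gives $p^*D=q^*D^m+E$ with $E\ge 0$, and comparing multiplicities along the strict transform $\bar G$ of a contracted negative component $G$ forces $\mult_{\bar G}q^*D^m<0$, hence some component of $D^m$ with negative coefficient containing $q(\bar G)$; this is a correct and standard use of \cite[3.38--39]{KM}. Your version buys uniformity (no case split between flips and divisorial contractions, no general-fiber analysis, and it localizes exactly where the loss could occur), at the cost of invoking a common resolution dominating all models and the $\q$-factoriality needed to pull back the $\RR$-divisors $D^m_j$ — hypotheses that do hold in all the applications (Lemmas~\ref{mmp.D.r.t.lem} and \ref{complem.model.prop}). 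The paper's induction is more elementary per step and keeps track of the image at every intermediate model, which is mildly stronger than the statement but not needed elsewhere. One small caveat: your parenthetical in (1) that ``$D$ is pseudo-effective over $Z$ because its negative part is $f$-exceptional'' is not how the lemma is actually applied (in Lemma~\ref{complem.model.prop} the negative part of $D=\Theta+\Delta-\rup{\Theta}$ need not be exceptional); the correct reason no fibre space appears is simply that $Y\to Z$ is bimeromorphic. This does not affect the proof.
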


\begin{proof} The first   claim holds by definition of the MMP and (2) is clear since we only contract curves that have negative intersection with the  bimeromorphic transform of $D$.

  Next we show that at every intermediate step of the MMP, we have $f(\supp D_{(<0)})=f_i(\supp (D_i)_{(<0)})$.  This is clear for $i=0$.  As we go
  from $i$ to $i+1$, the image $f_i(\supp (D_i)_{(<0)})$ is unchanged if $Y^i\map
  Y^{i+1}$ is a flip.  Thus let $\phi_i:Y_i\to Y_{i+1}$ be a divisorial contraction
  with exceptional divisor $E_i$ and let $F_i\subset E_i$ be a general fiber of
  $E_i$  (over its image in $Y$).  It is clear that
  $f_{i+1}(\supp (D_{i+1})_{(<0)})\subset f_i(\supp (D_i)_{(<0)})$. 
  Note that  $(D_i)_{(>0)}|_{F_i}$ is effective and $-E_i|_{F_i}$ is nef. On the other hand,  $D_i|_{F_i}$ is negative. Thus $(D_i)_{(<0)}$ has  another irreducible component that intersects $F_i$, so $f_{i+1}(\supp (D_{i+1})_{(<0)})\supset f_i(E_i)$.
\end{proof}

The following is a variant of \cite[Lem10]{KK22}.

\begin{lem}\label{mmp.D.r.t.lem} Let $Z$ be as in Assumption~\ref{f.t.ass} (i.e. taking its $X :=Z$)  and 
   $f:Y\to Z$  a projective,  bimeromorphic morphism.
   Assume that  $(Y, \Delta)$ is $\q$-factorial, klt and $K_Y+\Delta\sim_{f,\RR}0$.
  Let $D$ be a (not necessarily effective) divisor such that $ \supp D\subset \supp \Delta$.

  Then   $(Y, \Delta+\epsilon D)$ is klt for $|\epsilon|\ll 1$, and the
  $D$-MMP over $Z$ runs and terminates.
\end{lem}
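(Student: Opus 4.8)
The plan is to convert the bare ``$D$-MMP'' into an ordinary $\RR$-MMP for a klt pair whose boundary is big over $Z$; the existence-and-termination package quoted in Assumption~\ref{f.t.ass} then applies in each of the three settings. We may assume $D\neq 0$. For the klt assertion, note that $\supp D\subset\supp\Delta$ lets us write $\Delta=\sum_i a_i\Delta_i$ and $D=\sum_i d_i\Delta_i$ over a common set of prime divisors, with $0\le a_i<1$ because $(Y,\Delta)$ is klt, and $a_i>0$ whenever $d_i\neq 0$. For $|\epsilon|<\min_{d_i\neq 0}\bigl(a_i/(1+|d_i|)\bigr)$ we have $0\le a_i+\epsilon d_i<1$, so $\Delta+\epsilon D\ge 0$; and since klt is an open condition on boundary coefficients (on a fixed log resolution of $(Y,\supp\Delta)$ every log discrepancy is affine-linear in the $a_i+\epsilon d_i$, and $Y$ is $\q$-factorial), $(Y,\Delta+\epsilon D)$ is klt for $|\epsilon|\ll 1$.

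Now fix such an $\epsilon>0$. From $K_Y+\Delta\sim_{f,\RR}0$ we get $K_Y+\Delta+\epsilon D\sim_{f,\RR}\epsilon D$, an identity preserved by pushforward under every step of an MMP over $Z$ (together with $\q$-factoriality and klt-ness). Hence on each model a ray of $\nec{Y^i/Z}$ is $(K_{Y^i}+\Delta^i+\epsilon D^i)$-negative exactly when it is $D^i$-negative, so any $(K_Y+\Delta+\epsilon D)$-MMP over $Z$ \emph{is} a $D$-MMP over $Z$. To produce one that runs and terminates I would use scaling: by Assumption~\ref{f.t.ass}(4) there is an $f$-ample $\RR$-divisor $A\ge 0$ which, after passing to a general member of a sufficiently ample system and enlarging, we may take so that $(Y,\Delta+\epsilon D+A)$ is klt and $K_Y+\Delta+\epsilon D+A$ is $f$-ample, hence $f$-nef. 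The boundary $\Delta+\epsilon D+A$ contains the $f$-ample summand $A$, so it is big over $Z$, and therefore the $(K_Y+\Delta+\epsilon D)$-MMP over $Z$ with scaling of $A$ runs and terminates, by the standard results on the MMP with scaling for $\q$-factorial klt pairs whose boundary is big over the base (in case~(\ref{f.t.ass}.3) this is \cite{F2201} together with the vanishing theorems of \cite{T85}, and in case~(\ref{f.t.ass}.2) it is \cite{LM22}). Since $f$ is bimeromorphic, the run cannot end in a $(K+\Delta+\epsilon D)$-Mori fibre space over $Z$ (a structure $Y^m\to T\to Z$ with $\dim T<\dim Y^m$ is incompatible with $Y^m\to Z$ being bimeromorphic), so it terminates with $K_{Y^m}+\Delta^m+\epsilon D^m$, equivalently $D^m$, nef over $Z$; as every step contracts a $D$-negative ray, this is a $D$-MMP over $Z$ that runs and terminates.

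The main obstacle is precisely the termination step, and the whole argument rests on the reduction above: absorbing $K_Y+\Delta$ via the hypothesis $K_Y+\Delta\sim_{f,\RR}0$ turns the $D$-MMP into a klt MMP, and adding a relatively ample divisor makes the boundary big over $Z$ — the regime in which existence of flips and termination (including in the analytic and quasi-excellent settings, by \cite{F2201, LM22}) are available. The remaining points are routine: that $A$ can be chosen general enough to preserve klt-ness (Bertini, via the line bundles with general sections supplied by Assumption~\ref{f.t.ass}), that $\q$-factoriality and $K+\Delta\sim_{f,\RR}0$ persist along the MMP so that the identification of $D$-negative rays with $(K+\Delta+\epsilon D)$-negative rays holds at each step, and the bimeromorphy argument ruling out a Mori fibre space at the end.
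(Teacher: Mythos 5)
Your proposal is correct and follows essentially the same route as the paper: klt-ness of $(Y,\Delta+\epsilon D)$ for $|\epsilon|\ll 1$ via openness of log discrepancies on a fixed log resolution (using $\supp D\subset\supp\Delta$ to keep the boundary effective), and the identification of the $D$-MMP with the $(K_Y+\Delta+\epsilon D)$-MMP over $Z$ via $K_Y+\Delta\sim_{f,\RR}0$. The paper leaves the running-and-termination step implicit in the references of Assumption~\ref{f.t.ass}, whereas you spell out the standard scaling/relative-bigness argument (automatic here since $f$ is bimeromorphic); this is a difference of detail, not of method.
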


\begin{proof} Let $p:Y'\to Y$ be a log resolution of $(Y, \Delta)$.
  Then $p$ is also a log resolution of $(Y, \Delta+\epsilon D)$ and the discrepancies vary linearly with $\epsilon$. 
  Thus $(Y, \Delta+\epsilon D)$ is klt  for $|\epsilon|\ll 1$.

  Since  $K_Y+\Delta\sim_{f,\RR}0$, the $D$-MMP over $Z$ is the same as the
  $K_Y+ \Delta+\epsilon D$-MMP over $Z$. 
  \end{proof}

\begin{lem}\label{complem.model.prop} Let $Z$ be as in Assumption~\ref{f.t.ass} (i.e. taking its $X :=Z$)  and 
   $f:Y\to Z$  a projective,  bimeromorphic morphism.
 Assume that  $(Y, \Theta+\Delta)$ is $\q$-factorial, klt and $K_Y+\Theta+\Delta\sim_{f,\RR}0$. Assume also that $\coeff\Theta\subset (1-\lcg(n), 1)$.

  Then,  after a suitable MMP we get  $f^m:\bigl(Y^m, \Theta^m+\Delta^m\bigr)\to Z$ and an $\RR$-divisor $cH^m$ with rational $0<c<1$ such that 
  \begin{enumerate}
  \item $f(\supp \Theta)=f^m(\supp \Theta^m)$,
    \item $K_{Y^m}+ \rup{\Theta^m}+cH^m\sim_{f^m,\q}0$, and 
  \item $\bigl(Y^m, \rup{\Theta^m}+cH^m\bigr)$ is lc and  klt outside
   $\supp \rup{\Theta^m}$. 
  \end{enumerate}
\end{lem}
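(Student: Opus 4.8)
\textbf{Proof proposal for Lemma~\ref{complem.model.prop}.}
The plan is to run a carefully chosen MMP that simultaneously simplifies $\Theta$ (pushing its coefficients up to $1$ and discarding the components that obstruct this) and introduces an auxiliary ample-type divisor $H^m$ to correct the $\q$-linear class. First I would set up the bookkeeping: write $\Theta=\sum d_i\Theta_i$ with $d_i\in(1-\lcg(n),1)$, put $\lceil\Theta\rceil=\sum\Theta_i$, and consider the ``defect'' divisor $B:=\lceil\Theta\rceil-\Theta=\sum(1-d_i)\Theta_i\geq 0$, whose coefficients all lie in $(0,\lcg(n))$. The key point is that $\supp B\subset\supp\Theta\subset\supp(\Theta+\Delta)$, so Lemma~\ref{mmp.D.r.t.lem} applies with $D:=-B$ (or a suitable combination involving an auxiliary $H$): $(Y,\Theta+\Delta-\epsilon B)$ stays klt for $|\epsilon|\ll 1$, and since $K_Y+\Theta+\Delta\sim_{f,\RR}0$, the $(-B)$-MMP over $Z$ is the same as a $(K_Y+\Theta+\Delta-\epsilon B)$-MMP, which runs and terminates. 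Applying Lemma~\ref{mmp.on.crep.gen.lem} to $D=-B$ then yields $f^m:Y^m\to Z$ on which $-B^m$ is $f^m$-nef (equivalently $B^m$ is $f^m$-anti-nef), every step is an isomorphism over $Z\setminus f(\supp B)=Z\setminus f(\supp\Theta)$, and $f(\supp\Theta)=f(\supp B_{(>0)})=f^m(\supp B^m_{(>0)})=f^m(\supp\Theta^m)$, giving conclusion (1).

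Next I would produce the auxiliary divisor $cH^m$. Since $K_{Y^m}+\Theta^m+\Delta^m\sim_{f^m,\RR}0$ (the relation is preserved by a crepant-to-the-base MMP) and $\lceil\Theta^m\rceil=\Theta^m+B^m$, we get $K_{Y^m}+\lceil\Theta^m\rceil+\Delta^m\sim_{f^m,\RR}B^m$, which is $f^m$-anti-nef. To make this into an effective $f^m$-semiample class of the shape $cH^m$ with rational $c\in(0,1)$, I would use the projectivity of $f^m$ together with the line-bundle-with-sections input of Assumption~\ref{f.t.ass}(4): choose a sufficiently ample $L$ on (a compactification adapted to) $Z$, pull back and add a small rational multiple so that $cH^m$ is a general effective $\q$-divisor in the class of $B^m$ plus a small $f^m$-ample correction, chosen general enough to avoid all lc centers and to keep $\supp(cH^m)$ meeting $\lceil\Theta^m\rceil$ transversally away from the relevant strata. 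Adjusting $\epsilon$ and the ample multiple so that the total class becomes $\q$-linear rather than merely $\RR$-linear (rational approximation of the $d_i$, as in \cite{HMX14, KK22}), we arrange $K_{Y^m}+\lceil\Theta^m\rceil+cH^m\sim_{f^m,\q}0$, which is conclusion (2).

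For conclusion (3) I would invoke the defining property of the log canonical gap. On $Y^m$ the pair $(Y^m,\lceil\Theta^m\rceil+cH^m)$ has, near a general point of each stratum, boundary coefficients that are either $1$ (the components of $\lceil\Theta^m\rceil$) or small (the components of $cH^m$, chosen general so that $c<\lcg(n)$ after possibly shrinking). The pair $(Y^m,\Theta^m+\Delta^m)$ is klt and $\q$-factorial by construction of the MMP, hence so is the ambient space, so Definition~\ref{epsilon.defn} applies with the $\q$-factorial lc pair obtained by lowering $1\rightsquigarrow d_i$: since $0\leq 1-d_i<\lcg(n)$ and $(Y^m,\sum d_i\Theta^m_i+\Delta^m+cH^m)$ is lc (it is klt for the MMP's choice of small $\epsilon$, $c$, as $\supp(cH^m)\subset\supp(\Theta^m+\Delta^m)$ up to the general ample part which we keep with small enough coefficient), we conclude $(Y^m,\lceil\Theta^m\rceil+cH^m)$ is lc. Klt-ness outside $\supp\lceil\Theta^m\rceil$ is automatic because there the boundary is just $cH^m+\Delta^m$ with all coefficients $<1$, and the underlying pair was klt.

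The main obstacle I anticipate is conclusion (2)--(3) interface: arranging the auxiliary divisor $cH^m$ to be \emph{simultaneously} (i) $\q$-linearly trivial over $Z$ in the prescribed class, (ii) effective with rational coefficient $c\in(0,\lcg(n))$, and (iii) general enough that adding it neither creates new lc centers nor destroys the lc property — all while the MMP is only guaranteed to terminate for the perturbed divisor, and in the analytic setting one must pass to $X^\circ\subset X$ over a shrunk Stein base (which, per the remark after Assumption~\ref{f.t.ass}, we suppress in the lemma's notation). Controlling the rational approximation of the $d_i$ together with the ample correction, so that the perturbation stays inside the lcg window on \emph{every} stratum of the (possibly very singular) $Y^m$, is the delicate part, and is exactly where \cite[Thm.1.1]{HMX14} in full generality is needed rather than its smooth analogue.
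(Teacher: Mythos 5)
Your overall architecture matches the paper's (run an MMP via Lemmas~\ref{mmp.D.r.t.lem} and~\ref{mmp.on.crep.gen.lem}, trade part of the boundary for a general member of a relatively free linear system, then invoke Definition~\ref{epsilon.defn} and a Bertini argument for (3)), and your derivation of conclusion (1) is fine. But there is a genuine gap in the choice of the divisor that drives the MMP, and it breaks conclusion (2). You run the $D$-MMP for $D=-B$ with $B=\rup{\Theta}-\Theta$, so at the end $-B^m$ is $f^m$-nef. However, the class you must realize as $cH^m$ is dictated by (2): $cH^m\sim_{f^m,\q}-(K_{Y^m}+\rup{\Theta^m})\sim_{f^m,\q}\Delta^m-B^m$. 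After your MMP this class has no reason to be relatively nef, and the class $B^m$ you actually compute is relatively \emph{anti}-nef; neither is relatively semiample, so neither moves in a relatively free linear system. Your proposed repair --- adding the pullback of an ample system from $Z$, or a ``small $f^m$-ample correction'' --- either does nothing (pullbacks from $Z$ are $f^m$-trivial) or changes the relative $\q$-linear equivalence class, in which case $K_{Y^m}+\rup{\Theta^m}+cH^m$ is no longer $\sim_{f^m,\q}0$.

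The paper's proof fixes exactly this point by taking $D:=\Theta+\Delta-\rup{\Theta}=\Delta-B$. Then $D^m=\Delta^m-B^m$ is $f^m$-nef, $K_{Y^m}+\Theta^m+\Delta^m+\epsilon D^m$ is klt, $f^m$-nef and $\sim_{f^m,\q}\epsilon D^m$, so the relative basepoint-free theorem gives $D^m\sim_{f^m,\q}c|H^m|$ with $|H^m|$ relatively free and $0<c<1$; a general member then yields (2) on the nose and, together with Definition~\ref{epsilon.defn} applied to $(Y^m,\rup{\Theta^m})$ and \cite[1.13]{Re80}, conclusion (3). Note also that in the lemma the final boundary is $\rup{\Theta^m}+cH^m$ --- the divisor $\Delta^m$ is traded away for $cH^m$ --- whereas your computation keeps $\Delta^m$ in the pair; and you skip the preliminary reduction of $\Theta+\Delta$ from an $\RR$-divisor to a $\q$-divisor (\cite[11.38]{Ko23}), which is what makes the rationality of $c$ and the ``$\sim_{f^m,\q}$'' in (2) attainable. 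Finally, you do not need $c<\lcg(n)$ in (3): the log canonical gap is used only to pass from $\Theta^m$ to $\rup{\Theta^m}$, and the general choice of $H^m$ with $c<1$ handles the rest.
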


\begin{proof}  By \cite[11.38]{Ko23} we can approximate
  the $\RR$-divisor $\Theta+\Delta$ by a $\q$-divisor 
  such that all assumptions continue to hold. Thus we may as well assume that
   $\Theta+\Delta$ is a $\q$-divisor.

  We apply Lemma~\ref{mmp.D.r.t.lem} to  $D:=\Theta+\Delta-\rup{\Theta}$ to get
   $f^m:(Y^m, \Theta^m+\Delta^m)\to Z$. Note that $\supp D_{(<0)}=\supp\Theta$, hence we have (1).

  $K_{Y^m}+ \Theta^m+\Delta^m+\epsilon D^m$ is klt and $f^m$-nef, hence relatively basepoint-free. Since 
$K_{Y^m}+ \Theta^m+\Delta^m+\epsilon D^m\sim_{f^m,\q} \epsilon D^m$, 
 there is a  relatively basepoint-free linear system
 $|H^m|$ and $0<c<1$ such that  $D^m\sim_{f^m,\q} c|H^m|$.
 We choose a general divisor $H^m\in |H^m|$.
  (In order to do this, we may need to add to $|H^m|$ the pull-back of a sufficiently ample linear system first. This is  where we need (\ref{f.t.ass}.4).)
 Next note that
 $$
 K_{Y^m}+ \rup{\Theta^m}+cH^m\sim_{f^m,\RR} 
K_{Y^m}+ \Theta^m+\Delta^m-D^m+cH^m\sim_{f^m,\RR} 0,
$$
hence  (2) holds.

  Note that $\bigl(Y^m, \rup{\Theta^m}\bigr)$ is lc  by Definition~\ref{epsilon.defn}, and
  so (3) holds by the general choice of  $H^m$ (cf.\ \cite[1.13]{Re80}).
  \end{proof}

\begin{proof}[Proof of Theorem~\ref{near.lc.=.lc.q.thm}]\label{near.lc.=.lc.q.thm.pf} 
  By \cite[1.38]{Ko13}, 
    there is a  
  $\q$-factorial 
modification   $f:\bigl(Y,\Theta+\Delta_Y\bigr)\to (X,\Delta)$
such that
\begin{enumerate}
  \item $K_Y+\Theta+\Delta_Y\sim_{f,\RR} 0$, 
  \item $f_*(\Theta+\Delta_Y)=\Delta$,
\item  $\coeff\Theta\subset (1-\lcg(n), 1)$, and
\item  $f(\supp\Theta)=W$.
\end{enumerate}
Note that $\bigl(Y,\Theta+\Delta_Y\bigr)$ is klt since $(X,\Delta)$ is, by  \cite[2.30]{KM}.
We can now apply Lemma~\ref{complem.model.prop} to get
$f^m:\bigl(Y^m, \rup{\Theta^m}+cH^m\bigr)\to X$.

Note that $K_X+ f^m_*(\rup{\Theta^m}+cH^m)$ is $\q$-Cartier 
since $K_{Y^m}+ \rup{\Theta^m}+cH^m\sim_{f^m,\q}0$ by (\ref{complem.model.prop}.2). For the same reason,
$$
K_{Y^m}+ \rup{\Theta^m}+cH^m\sim_{\q}
(f^m)^*\bigl(K_X+ f^m_*(\rup{\Theta^m}+cH^m)\bigr).
$$
 Thus 
 $\bigl(X, f^m_*(\rup{\Theta^m}+cH^m)\bigr)$ is lc and
 the union of its lc centers is $f^m(\supp \Theta^m)$, again
 by  \cite[2.30]{KM}.
 Finally $W=f(\supp \Theta)=f^m(\supp \Theta^m)$ by (4) and
 (\ref{complem.model.prop}.1). We  take $\Delta^*$ (and $\Delta^*_C$) to be $f^m_*(\rup{\Theta^m}+cH^m)$. 
\end{proof}

 \section{Proofs of the main results}

 In the complex analytic setting, a prime divisor (i.e. a geometric divisorial valuation) over an open subset need not globalize, but the following result shows that
 this is not a problem for divisors with small log discrepancy.

\begin{proposition}\label{komo} Let $(X, \Delta)$ be a klt pair, with
$\Delta$ a not necessarily effective $\RR$-divisor.
    Let
    $$
    X_r\stackrel{\pi_r}{\longrightarrow} X_{r-1}
    \stackrel{\pi_{r-1}}{\longrightarrow}\cdots
    \stackrel{\pi_1}{\longrightarrow} X_0=X
    $$
    be a sequence of morphisms. Assume that every $\pi_i$ is either proper
and bimeromorphic,  or an open embedding.
    By pull-back and restriction, we get klt pairs  $(X_i,\Delta_i)$.

    Let $D_r$ be a prime divisor over $X_r$ such that $A(D_r,
X_r,\Delta_r)<1$. Then there is a geometric divisorial valuation $D$ over $X$
    such that $D$ and $D_r$ induce  equivalent geometric divisorial valuations over
$X_r$.
    \end{proposition}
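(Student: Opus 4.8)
The plan is to reduce to the case $r=1$ by induction, so we may assume we have a single morphism $\pi_1 : X_1 \to X_0 = X$ which is either an open embedding or a proper bimeromorphic morphism, together with a prime divisor $D_1$ over $X_1$ with $A(D_1, X_1, \Delta_1) < 1$, and we must produce an equivalent geometric divisorial valuation over $X$. The case of an open embedding is trivial, since a prime divisor over an open subset $U \subset X$ is, by definition, a divisor on some modification $Y \to U$, and the point is only to extend it across the rest of $X$; this is exactly the content we need, so the real case is when $\pi_1$ is proper bimeromorphic. In that case, after replacing $X_1$ by a further modification we may assume $D_1$ is an actual prime divisor on $X_1$ itself.

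First I would pick a log resolution $g : Y \to X_1$ of $(X_1, \Delta_1)$ on which the strict transform of $D_1$ (still call it $D_1$) is smooth, and set $h := \pi_1 \circ g : Y \to X$; then $D_1$ is a prime divisor over $X$ whose center $c_X(D_1) = h(D_1)$ is some closed analytic subset $Z \subset X$, and the issue is only that $Y \to X$, being merely proper bimeromorphic and not necessarily projective over a Stein base, need not be dominated by a \emph{nice} (projective) modification of $X$ — but equivalence of geometric divisorial valuations only requires that the strict transforms agree on \emph{a} common modification, and $Y$ itself serves. Thus at this level of generality $D_1$ already \emph{is} a geometric divisorial valuation over $X$; what must actually be extracted is that the log discrepancy computed over $X$ agrees, i.e.\ we want $A(D_1, X, \Delta) = A(D_1, X_1, \Delta_1)$, which holds by the chain rule for relative canonical divisors $K_{Y/X} = K_{Y/X_1} + g^*\pi_1^* K_{X_1/X}$ and because $\pi_1^*\Delta = \Delta_1$ (pull-back of the klt pair) — here the hypothesis $A(D_1, X_1, \Delta_1) < 1$ is what guarantees $D_1$ is exceptional over $X_1$, hence the discrepancy contribution of $K_{X_1/X}$ along $D_1$ is accounted for correctly and no subtlety about non-$\mathbb{Q}$-factoriality of $X_1$ intervenes.

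The hard part will be the genuine globalization issue hidden in the statement: when $\pi_1$ is an open embedding $X_1 = U \hookrightarrow X$, a prime divisor over $U$ lives on a modification $Y_U \to U$ that a priori has no extension to a modification of all of $X$, and one must use the smallness of $A(D_1, U, \Delta|_U) < 1$ to force such an extension. The idea here is that a divisorial valuation with log discrepancy $< 1$ over a klt pair is \emph{non-exceptional after a single blow-up of its center in low codimension}, or more precisely can be extracted by a weighted blow-up / plt blow-up argument: one takes the center $Z = c_U(D_1) \subset U$, takes its closure $\bar Z \subset X$, and performs the extraction of $D_1$ as a relatively projective divisorial contraction over $U$; because $A(D_1, U, \Delta|_U) < 1$ the extraction is of a divisor with \emph{terminal-type} bound that allows it (via the MMP machinery already cited, Assumption~\ref{f.t.ass} and the techniques behind Theorem~\ref{near.lc.=.lc.q.thm}) to be run over a Stein neighborhood and then the contracted model $Y' \to U$ admits a bimeromorphic model $\bar Y' \to X$ extending it. So the plan is: (i) reduce to $r=1$; (ii) dispose of the proper bimeromorphic case by the chain rule for relative canonical divisors as above; (iii) for the open-embedding case, extract $D_1$ by a relatively projective divisorial contraction over $U$ using the bound $A < 1$, extend the resulting projective modification across $X$ by taking closures and normalizing, and check that the strict transform of the extracted divisor induces the same valuation — the main obstacle being to run this extraction in the analytic, non-$\mathbb{Q}$-factorial, merely-Stein setting, for which one invokes the MMP over analytic bases of Fujino and Lyu--Murayama exactly as in Section~\ref{sec.6.jk}.
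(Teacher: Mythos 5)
Your plan has a genuine gap at exactly the step the proposition is about. In the open-embedding case you propose to extract $D_1$ by a relatively projective divisorial contraction over $U$ and then "extend the resulting projective modification across $X$ by taking closures and normalizing." That extension step is unjustified and fails in general: a modification of an open subset $U\subset X$ does not extend to a modification of $X$, because the closure in $X$ of the relevant analytic data over $U$ (the center of the extraction, the graph of the map) need not be an analytic subset of $X$. This is precisely the phenomenon highlighted in the remark following the proposition in the paper: blowing up a codimension-one subvariety $Z_U$ of a boundary component gives a divisor that is equivalent to a geometric divisorial valuation over $X$ \emph{iff} the closure of $Z_U$ is analytic, and this can fail when the log discrepancy equals $1$. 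So the hypothesis $A(D_r,X_r,\Delta_r)<1$ has to be used to tie the valuation to globally defined data, not merely to guarantee that an extraction exists over $U$; your proposal never does this, and invoking the analytic MMP of Fujino and Lyu--Murayama does not help, since $X$ itself is only a complex manifold (not assumed projective over a Stein base) and, more importantly, MMP over $U$ produces objects over $U$ only. (There is also an internal inconsistency: your first paragraph declares the open-embedding case trivial and the proper bimeromorphic case the real one, while the paper — and your own third paragraph — has it the other way around; the proper case is the clear one.)

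The paper's argument is much simpler and avoids MMP entirely: take one global log resolution $g_X\colon (X',\Delta')\to(X,\Delta)$ of the klt pair, arranged so that the negative part $\Delta'_2$ of $\Delta'$ is smooth. By \cite[2.7]{Ko13}, \emph{every} divisorial valuation with log discrepancy $<1$ with respect to such a log smooth pair is equivalent to an irreducible component of $\Delta'_2$. Restricting $g_X$ over $U$ gives a log resolution of $(U,\Delta|_U)$, so the valuation $D'_U$ with $A<1$ is a component of $\Delta'_2|_U$, hence the restriction of a global component of $\Delta'_2$ — which is exactly the desired globalization. If you want to salvage your approach, you would need to replace the "take closures" step by an argument of this kind identifying $D_1$ with a divisor already present on a resolution defined over all of $X$.
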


    \begin{proof} By induction, it is sufficient to prove this for
      a single step $\pi_i:X_i\to X_{i-1}$ of the chain. The claim is clear
if $\pi_i$ is proper. If $\pi_i$ is an open embedding, let us denote it by
$j:U\hookrightarrow X$.

      Take a log resolution $g_X: (X', \Delta')\to (X,\Delta)$
      such that the negative part of $\Delta'$ is smooth. That is,
      $\Delta'=\Delta'_1-\Delta'_2$ where the $\Delta'_i$ are effective and
      $\Delta'_2$  is smooth.  For such a pair, every
  divisorial valuation $D'$ satisfying
  $A(D', X',\Delta')<1$ is equivalent to an irreducible component of
   $\Delta'_2$ by \cite[2.7]{Ko13}. 

      By  restriction we  get a
      log resolution $g_U: (U', \Delta'_U)\to (U,\Delta|_U)$.
As before, every
  divisorial valuation $D'_U$ satisfying
  $A(D'_U, U',\Delta'_U)<1$ is equivalent to an irreducible component of
   $\Delta'_2|_U$, hence to an irreducible component of
   $\Delta'_2$.
  \end{proof}

   \begin{remark1} The bound $A(D_U, U,\Delta|_U)<1$ is sharp.
     Indeed, if $\Delta|_U$ contains a divisor $B_U$ with coefficient 1, and
     $Z_U\subset B_U$ has codimension 1, then blowing up $Z_U$ produces a divisor $D_{Z_U}$ with log discrepancy 1. This $D_{Z_U}$ is equivalent to a geometric divisorial valuation over $X$ iff the closure of $Z_U$ is an analytic subset.

     However, if $(X, \Delta)$ is klt, then the claim holds for
     $A(D_U, U,\Delta|_U)\leq 1$, which can be further improved depending on the total discrepancy of $(X, \Delta)$; see 
     \cite[2.7]{Ko13}.
     \end{remark1}

  \subsection{Proofs of Theorem~\ref{logdis} and Corollary~\ref{four}}  {\ }

 Using quasimonomial valuations from Section 5, we prove Theorem~\ref{logdis}.

 \begin{theorem}[=Theorem~\ref{logdis}]\label{converge2}
 
Suppose that $(X, \vp)$ is lc  and
 let $Z \subset X$ be an irreducible component of the non-klt locus of $(X, \vp)$. Then there exists a sequence of geometric divisorial valuations $G_j$  over $X$ whose centers are equal to $Z$,   such that the log discrepancies $A(G_j, X, \vp)$ converge to $0$. 
 \end{theorem}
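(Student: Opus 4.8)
The plan is to extract from Theorem~\ref{computer} a geometric quasimonomial valuation with vanishing log discrepancy and center $Z$, to perturb its weight vector to rational (hence divisorial) weights while keeping the log discrepancy close to $0$, and finally to globalize the resulting local divisors by means of Proposition~\ref{komo}.

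\emph{Step 1.} The smooth locus of $Z$ is a dense open subset of $Z$, so fix $x\in Z\cap X_{\reg}$ with $Z$ smooth irreducible near $x$. Since $(X,\vp)$ is lc, $\lct_y(\vp)\ge1$ for every $y$, and as $x$ lies on the component $Z$ of $W(\vp)=V_1$ we get $\lct_x(\vp)=1$. By Theorem~\ref{computer} there is a neighborhood $U\ni x$ and a geometric quasimonomial valuation $v_1$ on $U$, with center $Z\cap U$, computing $\lct_x(\vp)=1$; consequently $A(v_1,X,\vp)=A(v_1)-v_1(\vp)=0$. Write $v_1=\tau_{Z',D,\al^0}$ with $(Y,D)$ log smooth, $\mu\colon Y\to U$, $Z'=D_{j_1}\cap\dots\cap D_{j_m}$ and $\mu(Z')=Z\cap U$; after discarding those indices $i$ with $\al^0_i=0$ we may assume $\al^0$ lies in the interior $\RR^m_{>0}$ of the cone $\QM_{Z'}(Y,D)\cong\RR^m_{\ge0}$, so that $c_Y(v_1)=Z'$.

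\emph{Step 2.} On $\RR^m_{>0}$ set $h(\al):=A(\tau_{Z',D,\al},X,\vp)=\sum_i\al_i A(\ord_{D_i})-\tau_{Z',D,\al}(\vp)$. By \eqref{Avv} the first term is linear in $\al$, while $\al\mapsto\tau_{Z',D,\al}(\vp)$ is finite, concave and positively homogeneous of degree $1$ (Kiselman-number concavity: this function is a limit of minima of linear functions via the Demailly approximation, cf.\ \cite[\S3]{JM14}). Hence $h$ is continuous (indeed convex) and positively homogeneous of degree $1$ on $\RR^m_{>0}$, with $h(\al^0)=0$ by Step~1. Choose $\al^{(j)}\in\QQ^m_{>0}$ with $\al^{(j)}\to\al^0$; by continuity $h(\al^{(j)})\to0$. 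Scaling $\al^{(j)}$ to a primitive integer vector, $\tau_{Z',D,\al^{(j)}}$ is (a positive multiple of) the monomial valuation realized by the exceptional divisor of the weighted blow-up of $Z'\subset Y$ with these weights, hence defines a geometric divisorial valuation $G'_j$ on $U$ with $c_U(G'_j)=\mu(Z')=Z\cap U$ and $A(G'_j,X,\vp)=h(\al^{(j)})\to0$.

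\emph{Step 3 (globalization).} A priori each $G'_j$ lives only over $U$, so we invoke Proposition~\ref{komo}. Since $\vp\le\vp_N+O(1)$ we have $\lct_y(\vp_N)\ge\lct_y(\vp)\ge1$ for all $y$, so $(X,\vp_N)$ is lc on all of $X$ and $(X,(1-\ep)\vp_N)$ is klt on $X$ for every $\ep\in(0,1)$. By the valuative convergence of the Demailly approximants (Theorem~\ref{approx}, cf.\ \cite[\S3]{JM14}) we have $v_1(\vp_N)\to v_1(\vp)$, hence $A(v_1,X,\vp_N)\to A(v_1,X,\vp)=0$; fix $N$ with $A(v_1,X,\vp_N)<\tfrac12$. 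For this $N$ one has $A(G'_j,X,\vp_N)\to A(v_1,X,\vp_N)<\tfrac12$ and $G'_j(\vp_N)\to v_1(\vp_N)$ bounded, say by $M$; choosing $0<\ep<\tfrac1{2(M+1)}$ and then $j\gg0$ gives $A(G'_j,X,(1-\ep)\vp_N)=A(G'_j,X,\vp_N)+\ep\,G'_j(\vp_N)<1$. Realizing $(1-\ep)\vp_N$ by an $\RR$-divisor $\Delta'$ on a log resolution $\rho\colon X'\to X$, the pair $(X',\Delta')$ is klt, and applying Proposition~\ref{komo} along a chain given by the open embedding $\rho^{-1}(U)\subset X'$ followed by modifications realizing $G'_j$ produces a geometric divisorial valuation $G_j$ over $X'$, hence over $X$, inducing the same valuation as $G'_j$ over $U$. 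Then $c_X(G_j)=\overline{c_U(G'_j)}=\overline{Z\cap U}=Z$ since $Z$ is irreducible, and $A(G_j,X,\vp)=A(G'_j,X,\vp)=h(\al^{(j)})\to0$, giving the desired sequence.

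\emph{Main obstacle.} The crux is Step~3: a prime divisor over an open subset of a complex manifold need not globalize, and Proposition~\ref{komo} applies only to genuine klt pairs with an $\RR$-divisor rather than to the psh function $\vp$ directly; this forces the detour through the Demailly approximation to manufacture a globally klt algebraic pair $(X',\Delta')$ with the correct discrepancies, together with the valuative convergence $v_1(\vp_N)\to v_1(\vp)$. The concavity of the Kiselman number used in Step~2, although classical, is also an essential input.
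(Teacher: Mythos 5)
Your overall architecture (quasimonomial valuation from Theorem~\ref{computer}, rational perturbation of the weight vector, globalization via Proposition~\ref{komo}) matches the paper's, and Steps~1 and~3 are essentially right. But Step~2 has a genuine gap. When you scale $\al^{(j)}\in\QQ^m_{>0}$ to a primitive integer vector $q_j\al^{(j)}$, the object realized by the weighted blow-up is the \emph{prime divisor} $E_j$ with $\ord_{E_j}=\tau_{Z',D,q_j\al^{(j)}}=q_j\,\tau_{Z',D,\al^{(j)}}$, so by homogeneity $A(E_j,X,\vp)=q_j\,h(\al^{(j)})$. Your continuity argument only gives $h(\al^{(j)})\to 0$, i.e.\ smallness of the log discrepancy of the \emph{rescaled} valuations $\tfrac1{q_j}\ord_{E_j}$ --- and that statement is vacuous: for any fixed prime divisor $E$ with center $Z$ and $A(E,X,\vp)<\infty$, the rescalings $c\,\ord_E$ with $c\to 0$ already have log discrepancy tending to $0$. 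The theorem (as made precise by Theorem~\ref{logdis}, which asks for prime divisors) requires $q_j\,h(\al^{(j)})\to 0$, and since the denominators $q_j$ of a generic rational approximation blow up, continuity of $h$ is not enough. This is exactly why the paper invokes two quantitative inputs from \cite{LX18}/\cite{LXZ21}: the local Lipschitz bound $|h(\al)-h(\al^0)|\le C|\al-\al^0|$ coming from convexity, and the simultaneous diophantine approximation lemma \cite[Lem.2.7]{LX18} producing $\al^{(j)}=p^{(j)}/q_j$ with $|\al^{(j)}-\al^0|<\ep/q_j$; together these give $A(E_j,X,\vp)=q_j\,h(\al^{(j)})\le Cq_j|\al^{(j)}-\al^0|\le C\ep$.

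The same issue propagates into your Step~3: Proposition~\ref{komo} requires the \emph{prime divisor} to satisfy $A(D_r,X_r,\Delta_r)<1$ with respect to the auxiliary klt pair, whereas your estimate $A(G'_j,X,(1-\ep)\vp_N)<1$ is again only for the rescaled valuation; the corresponding bound for $E_j$ carries the factor $q_j$ and is not controlled without the diophantine input. Once Step~2 is repaired as above, your globalization argument (which is in fact spelled out more explicitly than in the paper, where the reduction to a klt $\RR$-divisor pair before applying Proposition~\ref{komo} is left implicit) goes through.
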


\begin{proof}  
First we construct these $G_j$ over a   neighborhood of a nonsingular point of $Z$.

   Let $x \in Z$ be a nonsingular point such that other irreducible components of the non-klt locus do not contain $x$. Since $(X, \vp)$ is lc, this $Z$ is the same as the sublevel set $V$ in Lemma~\ref{vpp}, $\{ y \in X : \lct_y (\vp) \le \lct_x (\vp) = 1 \}$, in a neighborhood of $x$. 
  By Theorem~\ref{computer}, there exists a geometric quasimonomial valuation $v$ that computes the $\lct_x (\vp)$, defined on a neighborhood  $U$ of $x$.  The center of $v$ is equal to $Z \cap U$.  
  
 Let $\mu: Y \to U$ be a proper modification such that $v$ is adapted to a log-smooth pair $(Y,D)$. 
  We may assume that (cf. \cite[\S 3]{JM12})  $v$ belongs to the relative interior of some $\QM_{T} (Y,D)$ for a stratum $T$, i.e. a connected transversal intersection of components of $D$ as in \eqref{DJ}.

  At this point, we  use two ingredients from the argument in \cite[Lem.3.1]{LXZ21}, which are originally from \cite{LX18}. 
  
  (1) 
   Due to linearity of $A(v)$ (from its definition)  and concavity of $v(\vp)$ (cf.\ \cite[Props.5.6 and 3.11]{BFJ08}, both as functions on the valuations $v$ in $\QM_{T} (Y,D)$, the function $A(v, X, \vp) := A(v) - v(\vp)$ is convex and hence locally Lipschitz continuous~\cite[Thm. 2.1.22]{Ho94}. Therefore, there exists a compact neighborhood $K$ of $v$ in  $\QM_{T} (Y,D)$   and constants $C = C_K > 0, \ep_0 > 0$ such that 
   \begin{equation}\label{Lip}
   \abs{A(w, X, \vp) - A(v, X, \vp)} \le C \abs{w-v}  
   \end{equation}
for every $w \in K$. 

 (2) From the diophantine approximation lemma of \cite[Lem.2.7]{LX18}, 
  for any $\ep > 0$, there exist divisorial valuations $$v_1 = \tfrac{1}{q_1} \ord_{F_1}, \ldots, v_r = \tfrac{1}{q_r} \ord_{F_r}$$ in $\QM_{T} (Y,D)$, where  $F_1, \ldots, F_r$ are prime divisors  over $X$ and   $q_1, \ldots, q_r$ are positive integers. Also we may assume that $v$ is in the convex cone generated by $v_1, \ldots, v_r$, and  $\abs{v_j - v} < \tfrac{\ep}{q_j} $, where the Euclidean distance is taken in the cone $\QM_{T} (Y,D) \cong \RR^r_{\ge 0}$.

  Now apply \eqref{Lip} to $w = v_i$, which certainly belongs to $K$.  Since $A(v, X, \vp) = 0$, we have, for every $i$, 
    $$ \abs{A(F_i) - F_i (\vp) - 0 } = q_i \abs{A(v_i) - v_i (\vp) } \le C q_i \abs{v_i - v} \le C \ep.$$
For every $j$, taking $\ep = \frac{1}{j}$ and $G_j$ to be one of $F_i$'s completes the proof, noting that the center of $F_i$ is equal to $Z \cap U$,  as it belongs to the same relative interior of $\QM_T (Y,D)$. 

Applying  Proposition~\ref{komo} shows that these $G_j$ are 
equivalent to geometric divisorial valuations $G_j^X$ over $X$. 
\end{proof}

Now we give the proof of Corollary~\ref{four}. Note that the proof itself uses Definition~\ref{lccenter} instead of Theorem~\ref{logdis}. In the case when $Z$ is a maximal log canonical center, i.e. an irreducible component of the non-klt locus, it is from Theorem~\ref{logdis} that the condition of Definition~\ref{lccenter} holds for $Z$.

\begin{proof}[Proof of Corollary~\ref{four}]

Since $\vp_m \ge \vp + O(1)$ for every $m$, the pair $(X, \vp_m)$ is lc. Hence $\mld (Z, X, \vp_m) \ge 0$. 

Now from Definition~\ref{lccenter}, there exists a sequence of prime divisors over $X$ with its center equal to $Z$, $\{ E_i \}_{i \ge 1}$ such that $A(E_i , X, \vp) \to 0$ as $i \to \infty$. Therefore,  given $\ep > 0$, there exists $i \ge 1$ such that 
 $ \big| A(E_i, X, \vp) \big|  < \tfrac{1}{2} \ep. $ Also by Lemma~\ref{cmvpm}, there exists $m \ge 1$ (which may depend on $E_i$) such that 
 $$\big| A(E_i, X,  \vp_m) - A (E_i, X, \vp) \big|  < \tfrac{1}{2} \ep .$$  
From Definition~\ref{mld} of minimal log discrepancy,  we have $$\mld (Z, X, \vp_m) := \inf_E A(E, X,  \vp_m)$$ where the infimum is taken over $E$'s whose center is equal to $Z$. Taking $E = E_i$ from the above completes the proof. 
 \end{proof}

\begin{lemma}\label{cmvpm}
Let $E$ be a prime divisor over (i.e. a geometric divisorial valuation on) $X$. We have $ \lim_{m \to \infty} A(E, X,  \vp_m) \to A(E, X, \vp)$.

\end{lemma}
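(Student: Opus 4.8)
The statement to prove is that for a fixed geometric divisorial valuation $E = \ord_F$ over $X$, we have $\lim_{m\to\infty} A(E,X,\vp_m) = A(E,X,\vp)$. Since $A(E,X,\vp_m) = A(E) - E(\vp_m)$ and $A(E,X,\vp) = A(E) - E(\vp)$, with $A(E) = 1 + \ord_F(K_{X'/X})$ independent of $m$, the whole assertion reduces to showing that the generic Lelong numbers converge:
\[
\lim_{m\to\infty} E(\vp_m) = E(\vp).
\]
The plan is to reduce this to the pointwise convergence of ordinary Lelong numbers asserted in Theorem~\ref{approx} (Demailly's theorem), applied not on $X$ but on the modification $X'$ carrying $F$.

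Here is the sequence of steps I would carry out. First, fix the modification $f\colon X'\to X$ on which $F\subset X'$ is a smooth connected hypersurface with $c_X(F) = f(F)$, and fix a generic point $y\in F$ (generic in the sense that $E(\vp) = \nu(f^*\vp, y)$ and, crucially, that the analogous equality holds simultaneously for all the $\vp_m$; this is legitimate because the exceptional locus and the countably many "bad" sets where the generic Lelong number is not attained are all proper analytic subsets of $F$, so their union avoids a very general point). Second, observe that $f^*\vp_m$ is, up to a smooth (hence locally bounded) error coming from the partition-of-unity gluing in the definition of the Demailly approximation, the Demailly approximation of $f^*\vp$ computed locally near $y$ — or, more cleanly, just invoke that pullback commutes with taking Lelong numbers and that $\vp_m\to\vp$ pointwise and in $L^1_{\loc}$, so $f^*\vp_m\to f^*\vp$ in $L^1_{\loc}$ near $y$. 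Third, apply the pointwise-convergence-of-Lelong-numbers clause of Theorem~\ref{approx}: at the point $y$, $\nu(f^*\vp_m, y)\to\nu(f^*\vp, y)$. Combining with the genericity of $y$ gives $E(\vp_m)\to E(\vp)$, and adding back the constant $A(E)$ finishes the proof.

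The one subtlety — and the step I expect to be the main obstacle — is the commutation of the Demailly approximation with pullback along $f$, together with the choice of a point $y$ that is simultaneously generic for $\vp$ and for \emph{all} of the $\vp_m$. The cleanest route is to avoid claiming that $(f^*\vp)_m = f^*(\vp_m)$ on the nose (which fails because of the $\tfrac1m$ in Definition~\ref{approx.def.jk} and the partition of unity) and instead argue directly: near the generic point $y$, choose local coordinates in which $F = (z_1 = 0)$, write $E(\psi) = \nu(f^*\psi, y)$ for the single-variable restriction, and note that the function $m\mapsto \nu(f^*\vp_m, y)$ is controlled by Theorem~\ref{approx} applied to $\vp$ itself (the statement there is about Lelong numbers of $\vp_m$ at arbitrary points, and Lelong numbers are a local biholomorphic invariant that can be computed after the proper modification $f$ by Siu's theory, cf.\ \cite{Si74}, \cite[(2.17)]{D11}). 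Once one knows that for a \emph{fixed} $y$ the numbers $\nu(f^*\vp_m,y)$ converge to $\nu(f^*\vp,y)$, the only remaining point is that a countable intersection of nonempty Zariski-open subsets of the irreducible $F$ is nonempty, so a point $y$ generic for $\vp$ and for every $\vp_m$ exists; there $\nu(f^*\vp_m,y) = E(\vp_m)$ and $\nu(f^*\vp,y) = E(\vp)$, which is exactly what is needed.
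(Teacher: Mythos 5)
There is a genuine gap at the central step. Your reduction of the lemma to the convergence $E(\vp_m)\to E(\vp)$ of generic Lelong numbers along $F\subset X'$ is fine, as is the choice of a point $y\in F$ simultaneously generic for $\vp$ and all the $\vp_m$. The problem is how you propose to prove $\nu(f^*\vp_m,y)\to\nu(f^*\vp,y)$ at such a $y$. Theorem~\ref{approx} asserts convergence of Lelong numbers of $\vp_m$ at points of $X$ (equivalently, for the valuations $\ord_E$ with $E$ the exceptional divisor of a \emph{point} blow-up, where the underlying estimate gives $\nu(\vp,x)-\tfrac{n}{m}\le\nu(\vp_m,x)\le\nu(\vp,x)$). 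It says nothing about Lelong numbers of $f^*\vp_m$ at a point $y$ lying on the exceptional locus of a modification $f:X'\to X$: there $f$ is not a local biholomorphism, so $\nu(f^*\vp_m,y)$ is not a Lelong number of $\vp_m$ at any point of $X$, and your remark that Lelong numbers ``can be computed after the proper modification'' fails precisely at exceptional points. Your fallback --- that $f^*\vp_m\to f^*\vp$ in $L^1_{\loc}$ near $y$, hence the Lelong numbers converge --- is also invalid: Lelong numbers are only upper semicontinuous under $L^1_{\loc}$ convergence, and the sequence $\vp_m$ is not monotone (as the paper itself notes), so no squeeze is available from $L^1_{\loc}$ convergence alone.

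What is actually needed is the uniform two-sided estimate for an \emph{arbitrary} divisorial valuation $v=\ord_E$,
\begin{equation*}
v(\vp)\ \ge\ v(\vp_m)\ \ge\ v(\vp)-\tfrac{1}{m}A(v),
\end{equation*}
which is \cite[Lem.B.4]{BBJ21} (applied with $U=m\vp$); this is exactly what the paper's one-line proof cites, and it is a genuine generalization (via Ohsawa--Takegoshi-type extension) of Demailly's pointwise statement, not a formal consequence of it. Once you have this inequality, the lemma is immediate since $A(E,X,\vp_m)=A(v)-v(\vp_m)$ and $A(v)$ is fixed; without it, your argument does not close.
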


\begin{proof}

Let $v = \ord_E$. 
  By definition, $A(E, X,  \vp_m) = A(v)  - v (\vp_m)$. 
  On the other hand, we have the relation $v (\vp) \ge v (\vp_m) \ge v (\vp) - \tfrac{1}{m} A(v)$ from 
  \cite[Lem.B.4]{BBJ21}  where we take $U= m\vp$.  (This generalizes the case of $v = \ord_E$ when $E$ is the exceptional divisor of a point blow-up from \cite{D92}.)
\end{proof}

\subsection{Proof of Theorem~\ref{main}} {\ }

Based on preparations from the previous sections, we complete the proofs of the main results.
 
 \begin{proof}[Proof of Theorem~\ref{main}]
  Let  $W(\vp):=\spec_X \OO_X/\JJ(\vp)$ be the non-klt locus  with irreducible components  $W_i\subset W(\vp)$.
   Let $(\vp_m)_{(m \ge 1)}$  be the Demailly approximation sequence of $\vp$ (depending on some choice of a partition of unity on $X$).  Since each $Z := W_i$ is a maximal lc center, we can apply Corollary~\ref{four}  for $Z = W_i$ for each $i$. Hence for $m$ sufficiently large,  there are geometric divisorial valuations  $E_i$ with center $W_i$ such that
   $A(E_i, X, \vp_m) < \frac{1}{2} \lcg (n)$ for every $i$, in view of the definition of $\mld$. (Here $\lcg(n) > 0$ is the log canonical gap as in Definition~\ref{epsilon.defn}.) 
   
   We will convert this last condition of $E_i$'s into another  condition for mld with respect to a local pair $(U, \Delta_U)$ so that we can apply  Theorem~\ref{near.lc.=.lc.q.thm} (2), the consequence of the MMP over complex analytic spaces.

   Since $(X, \vp)$ is lc and $\vp_m \ge \vp + O(1)$,  we know that $(X, (1-\ep) \vp_m)$ is klt for every $\ep > 0$. 
Since $A(E_i, X, t\vp)$ is a continuous function of $t$, we can choose
$m\gg 1$ and  $0<\ep\ll 1$ such that
$
A\bigl(E_i, X, (1-\epsilon)\vp_m\bigr) <  \lcg (n)$ for every $i$.

Now given arbitrary point $p$ of $X$, let $W, U$ with $p \in W \Subset  U \Subset X$ be relatively compact Stein open neighborhoods. It suffices to consider those $p \in W(\vp)$ since otherwise one can take $\psi = 0 = 1 \log \abs{1}$.

Since $(1-\ep) \vp_m$ has analytic singularities, by \cite[p.249]{Ko13}, there is an $\RR$-divisor  $\Delta_U \ge 0$ on $U\subset X$ such that
$(U, \Delta_U)$ is klt and
$$
A(E_i, U,\Delta_U)=
A\bigl(E_i, X, (1-\epsilon)\vp_m\bigr) <  \lcg (n)\quad\mbox{for every $i$.}
$$

Using this, now we can apply  Theorem~\ref{near.lc.=.lc.q.thm} (2) taking $\pi: X= U \to S=U$ to be the identity map, $C = \overline{W}$ and taking $(X, \Delta) := (U, \Delta_U)$.   As the result, we obtain  a Stein manifold $X_C$ with $W \subset X_C \subset X$ and an lc pair $(X_C, \Delta^*_C)$
such that  $W(\varphi) \cap X_C$ is equal to the union of all lc centers of $(X_C, \Delta^*_C)$.

 Finally take $V := X_C$ and $\psi$ to be a quasi-psh function having the same algebraic poles as the divisor $\Delta^*_C$ (as in Example~\ref{2.7.exmp.jk}). 
\end{proof} 

\begin{proof}[Proof of Corollary~\ref{semi}]
  Apply  Theorem~\ref{Ambro2003}---the analytic version of the Ambro-Fujino seminormality theorem---to the new pair $(X_C, \Delta^*_C)$
  constructed in the above proof of Theorem~\ref{main}. 
\end{proof}

\begin{remark1}
Let $(X, \vp)$ be an lc pair where X is a complex manifold and $\vp$ a quasi-psh
function. Our arguments also show that if
  $W$ is a  locally finite union of log canonical centers of $(X, \vp)$, then $W$ is
seminormal.

\end{remark1}

\footnotesize

\bibliographystyle{amsplain}

\begin{thebibliography}{widest-}


\renewcommand{\baselinestretch}{1.0}

\bibitem[ACH15]{ACH15} P.  \ringaccent{A}hag, U. Cegrell and P.H. Hiep,
\emph{Monge-Ampère measures on subvarieties}, J. Math. Anal. Appl. 423 (2015), no. 1, 94–105.

\bibitem[A99]{A99} F. Ambro, \emph{On minimal log discrepancies}, Math. Res. Lett. 6 (1999), no. 5-6, 573–580.

\bibitem[A03]{A03} F. Ambro, \emph{Quasi-log varieties}, Tr. Mat. Inst. Steklova, 240 (2003), 220–239.
 
 \bibitem[AB69]{AB69} A. Andreotti and E. Bombieri, \emph{Sugli omeomorfismi delle varietà algebriche}, 
Ann. Scuola Norm. Sup. Pisa Cl. Sci. (3) 23 (1969), 431–450.


\bibitem[AN67]{AN67} A. Andreotti and F. Norguet, \emph{La convexité holomorphe dans l'espace analytique des cycles d'une variété algébrique}, Ann. Scuola Norm. Sup. Pisa Cl. Sci. (3) 21 (1967), 31–82.

\bibitem[BM19]{BM19} D. Barlet and  J. Magnússon, \emph{Complex analytic cycles. I—basic results on complex geometry and foundations for the study of cycles}, Grundlehren der mathematischen Wissenschaften 356. Springer Cham, 2019. 



\bibitem[BBJ21]{BBJ21} R. Berman, S. Boucksom and M. Jonsson, \emph{A variational approach to the Yau-Tian-Donaldson conjecture}, J. Amer. Math. Soc. 34 (2021), no. 3, 605–652.


\bibitem[Be13]{Be13} B. Berndtsson, \emph{The openness conjecture and complex Brunn-Minkowski inequalities}, Complex geometry and dynamics, 29–44, Abel Symp., 10, Springer, Cham, 2015.



\bibitem[Bo70]{B70} E. Bombieri, \emph{Algebraic values of meromorphic maps},  Invent. Math. 10 (1970), 267–287 and Addendum, Invent. Math. 11 (1970), 163-166.




\bibitem[Bou21]{B21} S. Boucksom, \emph{Valuations and singularities of plurisubharmonic functions}, online lecture at Geometry and TACoS, June 2021. 

\bibitem[BFJ08]{BFJ08} S. Boucksom, C. Favre and M. Jonsson, \textit{Valuations and plurisubharmonic singularities}, Publ. Res. Inst. Math. Sci, 44 (2008), no.2, 449-494.

\bibitem[BFJ14]{BFJ14} S. Boucksom, C. Favre and M. Jonsson, \emph{A refinement of Izumi's theorem}, Valuation theory in interaction, 55–81, EMS Ser. Congr. Rep., Eur. Math. Soc., Zürich, 2014.



\bibitem[DEM10]{DEM10} T. de Fernex, L. Ein and M. Mustaţă, \emph{Shokurov's ACC conjecture for log canonical thresholds on smooth varieties}, Duke Math. J. 152(2010), no.1, 93–114.



\bibitem[D92]{D92} J.-P. Demailly, \emph{Regularization of closed positive currents and intersection theory}, J. Algebraic Geom. 1 (1992), no. 3, 361-409.

\bibitem[D93]{D93} J.-P. Demailly, \emph{A numerical criterion for very ample line bundles}, J. Differential Geom. 37 (1993), no. 2, 323–374. 

\bibitem[D11]{D11} J.-P. Demailly, \emph{Analytic methods in algebraic geometry}, Surveys of Modern Mathematics, 1. International Press, Somerville, MA, 2012. 

\bibitem [D12]{DX} J.-P. Demailly, \emph{Complex analytic and differential geometry}, self-published e-book, Institut Fourier, 455 pp, last version: June 21, 2012.


\bibitem[D13]{D13} J.-P. Demailly, \emph{On the cohomology of pseudoeffective line bundles}, Complex geometry and dynamics, 51–99, Abel Symp., 10, Springer, Cham, 2015.




\bibitem[DK01]{DK01} J.-P. Demailly and J. Koll\'ar, \emph{Semi-continuity of complex singularity exponents and Kähler-Einstein metrics on Fano orbifolds}, Ann. Sci. École Norm. Sup. (4) 34 (2001), no. 4, 525–556.





\bibitem[FJ04]{FJ04} C. Favre and M. Jonsson, \emph{Valuative analysis of planar plurisubharmonic functions}, 
Invent. Math. 162 (2005), no. 2, 271–311.

\bibitem[FJ05]{FJ05} C. Favre and M. Jonsson, \emph{Valuations and multiplier ideals},
  J. Amer. Math. Soc. 18 (2005), no. 3, 655–684.

\bibitem[F09]{Fuj09} O. Fujino, \emph{Introduction to  the minimal model program for log canonical pairs}, arXiv:0907.1506


\bibitem[F17]{fuj-book}  O. Fujino, \emph{Foundations of the minimal model program},
    {MSJ Memoirs}, Vol.\ 35,
{Mathematical Society of Japan, Tokyo},
     (2017). 
    

\bibitem[F22]{F2201} O. Fujino, \emph{Minimal model program for projective morphisms between complex analytic spaces}, arXiv:2201.11315. 




\bibitem[F22a]{F22} O. Fujino, \emph{Cone and contraction theorem for projective morphisms between complex analytic spaces}, arXiv:2209.06382. 

\bibitem[GR84]{GR84} H. Grauert and R. Remmert, \emph{Coherent analytic sheaves}, Grundlehren der mathematischen Wissenschaften 265, Springer-Verlag, Berlin, 1984.


\bibitem[GT80]{GT80} S. Greco and C. Traverso, \emph{On seminormal schemes},
Compositio Math. 40 (1980), no. 3, 325–365.


\bibitem[G17]{G17} Q. Guan, \emph{Nonexistence of decreasing equisingular approximations with logarithmic poles}, J. Geom. Anal. 27 (2017), no. 1, 886–892.

\bibitem[GL16]{GL16} Q. Guan and Z. Li, \emph{Multiplier ideal sheaves associated with weights of log canonical threshold one}, Adv. Math. 302 (2016), 40-47.
  
\bibitem[GL20]{GL20} Q. Guan and Z. Li, \emph{Cluster points of jumping coefficients and equisingularties of plurisubharmonic functions}, Asian J. Math. 24 (2020), no. 4, 611–620.



\bibitem[GZ14]{GZ15} Q. Guan and X. Zhou, \textit{A proof of Demailly's strong openness conjecture}, Ann. of Math. (2) {182} (2015), no. 2, 605-616.

\bibitem[GZ15]{GZ15i} Q. Guan and X. Zhou, \emph{Effectiveness of Demailly's strong openness conjecture and related problems}, Invent. Math. 202 (2015), no. 2, 635–676. 

\bibitem[HMX14]{HMX14} C. Hacon, J. McKernan and C. Xu, \emph{ACC for log canonical thresholds}, Ann. of Math. (2) 180 (2014), no. 2, 523–571. 

\bibitem[H62]{H62} H. Hironaka, \emph{On resolution of singularities (characteristic zero)}, 1963 Proc. Internat. Congr. Mathematicians (Stockholm, 1962) pp. 507–521 Inst. Mittag-Leffler, Djursholm. 

\bibitem[H64]{H64} H. Hironaka, \emph{Resolution of singularities of an algebraic variety over a field of characteristic zero}, I, II. Ann. of Math. (2) 79 (1964), 109–203; ibid. (2) 79 (1964), 205–326.

\bibitem[H66]{H66} H. Hironaka (Hej Iss'sa), \emph{On the meromorphic function field of a Stein variety}, Ann. of Math. (2) 83 (1966), 34–46.



\bibitem[H\"o65]{Ho65} L. H\"ormander, \emph{$L^2$  estimates and existence theorems for the $\overline{\partial}$ operator}, Acta Math. 113 (1965), 89–152.

\bibitem[H\"o94]{Ho94} L. H\"ormander, \emph{Notions of convexity}, Progress in Mathematics, 127. Birkhäuser Boston, Inc., Boston, MA, 1994.

\bibitem[JM12]{JM12} M. Jonsson and M. Musta\c{t}\v{a}, \emph{Valuations and asymptotic invariants for sequences of ideals}, Ann. Inst. Fourier (Grenoble) 62 (2012), no. 6, 2145-2209.

\bibitem[JM14]{JM14} M. Jonsson and M. Musta\c{t}\v{a}, \emph{An algebraic approach to the openness conjecture of Demailly and Kollár}, J. Inst. Math. Jussieu 13 (2014), no. 1, 119–144.


\bibitem[Ka97]{Ka97} Y. Kawamata, \emph{On Fujita's freeness conjecture for $3$-folds and $4$-folds}, Math. Ann. 308 (1997), no. 3, 491--505.



\bibitem[Ki10]{Ki10} D. Kim, \emph{The exactness of a general Skoda complex}, Michigan Math. J. 63 (2014), no. 1, 3–18. 

\bibitem[Ki14]{Ki14} D. Kim, \emph{A remark on the approximation of plurisubharmonic functions}, C. R. Math. Acad. Sci. Paris 352 (2014), no. 5, 387–389.



\bibitem[Ki21]{Ki21} D. Kim, \emph{$L^2$  extension of holomorphic functions for log canonical pairs}, J. Math. Pures Appl. (9) 177 (2023), 198–213.


\bibitem[KR20]{KR18} D. Kim and A. Rashkovskii, \emph{Higher Lelong numbers and convex geometry}, J. Geom. Anal. 31 (2021), no. 3, 2525–2539.

\bibitem[KR22]{KR20} D. Kim and A. Rashkovskii, \emph{Asymptotic multiplicities and Monge-Ampère masses (with an appendix by S. Boucksom)},  Math. Ann. 385 (2023), no. 3-4, 1947–1972.

\bibitem[KS20]{KS20} D. Kim and H. Seo, \emph{Jumping numbers of analytic multiplier ideals}, Ann. Polon. Math. 124 (2020), no. 3, 257–280.

\bibitem[Ks93]{Ks93} C. Kiselman, \emph{Plurisubharmonic functions and their singularities}, Complex potential theory (Montreal, PQ, 1993), 273–323, NATO Adv. Sci. Inst. Ser. C: Math. Phys. Sci., 439, Kluwer Acad. Publ., Dordrecht, 1994.

\bibitem[Ks94]{Ks94} C. Kiselman, \emph{Attenuating the singularities of plurisubharmonic functions},  Ann. Polon. Math. 60 (1994), no. 2, 173–197. 



\bibitem[Ko94]{Ko94} J. Koll\'ar, \emph{Log surfaces of general type; some conjectures}, Classification of algebraic varieties (L'Aquila, 1992), 261–275, Contemp. Math., 162, Amer. Math. Soc., Providence, RI, 1994.

\bibitem[Ko97]{Ko97} J. Koll\'ar, \emph{Singularities of pairs}, Algebraic geometry - Santa Cruz 1995, 221-287, Proc. Sympos. Pure Math., 62, Part 1, Amer. Math. Soc., Providence, RI, 1997.


\bibitem[Ko07]{Ko07} J. Koll\'ar, \emph{Kodaira's canonical bundle formula and adjunction},  Flips for 3-folds and 4-folds, 134-162, Oxford Lecture Ser. Math. Appl., 35, Oxford Univ. Press, Oxford, 2007.




\bibitem[Ko13]{Ko13} J. Koll\'ar, \emph{Singularities of the minimal model program}, Cambridge Tracts in Mathematics, 200. Cambridge University Press, Cambridge, 2013. 

\bibitem[Ko14]{Ko14} J. Koll\'ar, \emph{Semi-normal log centres and deformations of pairs}, Proc. Edinb. Math. Soc. (2) 57 (2014), no. 1, 191–199.



\bibitem[Ko23]{Ko23} J. Koll\'ar, \emph{Families of varieties of general type}, Cambridge Tracts in Mathematics, 231. Cambridge University Press, Cambridge, 2023.

\bibitem[KK10]{KK10} J. Koll\'ar and S. Kov\'acs, \emph{Log canonical singularities are Du Bois},
J. Amer. Math. Soc. 23 (2010), no. 3, 791–813.

\bibitem[KK20]{KK20} J. Koll\'ar and S. Kov\'acs, \emph{Deformations of log canonical and F-pure singularities},
Algebr. Geom. 7 (2020), no. 6, 758–780.

\bibitem[KK22]{KK22} J. Koll\'ar and S. Kov\'acs, \emph{Du Bois property of log centers}, arXiv:2209.14480, to appear in Pure Appl. Math. Q.

\bibitem[KM98]{KM} J. Koll\'ar and S. Mori, \emph{Birational geometry of algebraic varieties}, Cambridge Tracts in Mathematics, 134. Cambridge University Press, Cambridge, 1998. 

\bibitem[KSC04]{KSC04} J. Koll\'ar, K. Smith and A. Corti, \emph{Rational and nearly rational varieties}, 
Cambridge Studies in Advanced Mathematics, 92. Cambridge University Press, Cambridge, 2004.

\bibitem[La04]{La04} R. Lazarsfeld, \emph{Positivity in Algebraic Geometry},  Ergebnisse der Mathematik und ihrer Grenzgebiete. 3. Folge. 48 - 49. Springer-Verlag, Berlin, 2004.

\bibitem[LL06]{LL06} R. Lazarsfeld and K. Lee, \emph{Local syzygies of multiplier ideals}, Invent. Math. 167 (2007), no. 2, 409–418.


\bibitem[Le08]{Le08} K. Lee, \emph{On the realization of line arrangements
as multiplier ideals}, Ph.D. Thesis, University of Michigan, 2008. 



\bibitem[LX18]{LX18} C. Li and C. Xu, \emph{Stability of valuations: higher rational rank}, Peking Math. J. 1 (2018), no. 1, 1–79. 

\bibitem[Li20]{Li20} Z. Li, \emph{A note on multiplier ideal sheaves on complex spaces with singularities}, arXiv:2003.11717. 

\bibitem[LXZ21]{LXZ21} Y. Liu, C. Xu and Z. Zhuang, \emph{Finite generation for valuations computing stability thresholds and applications to K-stability},  Ann. of Math. (2) 196 (2022), no. 2, 507–566. 


\bibitem[LM22]{LM22} S. Lyu and T. Murayama, \emph{The relative minimal model program for excellent algebraic spaces and analytic spaces in equal characteristic zero}, arXiv:2209.08732.

\bibitem[Ma80]{M80} H. Matsumura, \emph{Commutative Algebra}, Mathematics Lecture Note Series, Volume 56, Benjamin/Cummings, Reading, Mass, 1980.

\bibitem[Mu21]{Mu21} T. Murayama, \emph{Relative vanishing theorems for $\QQ$-schemes}, arXiv:2101.10397. 


\bibitem[OT87]{OT87} T. Ohsawa and K. Takegoshi, \emph{On the extension of $L\sp 2$ holomorphic functions}, Math. Z. 195 (1987), no. 2, 197-204. 


\bibitem[Ra95]{Ra95} T. Ransford, \emph{Potential theory in the complex plane}, 
Cambridge University Press, Cambridge, 1995.

\bibitem[R13]{R13} A. Rashkovskii, \emph{Analytic approximations of plurisubharmonic singularities}, Math. Z. 275 (2013), no. 3-4, 1217-1238.



\bibitem[Re80]{Re80} M. Reid, \emph{Canonical 3-folds}, Journées de Géometrie Algébrique d'Angers, Juillet 1979/Algebraic Geometry, Angers, 1979, pp. 273–310. 

\bibitem[Se21]{Se21} H. Seo, \emph{Cluster points of jumping numbers of toric plurisubharmonic functions}, J. Geom. Anal. 31 (2021), no. 12, 12624–12632.

\bibitem[Sh92]{Sh92} V.V. Shokurov, \emph{Three-dimensional log perestroikas}, Izv. Ross. Akad. Nauk Ser. Mat. 56 (1992), no. 1, 105–203. 



\bibitem[Si74]{Si74} Y.-T. Siu, \emph{Analyticity of sets associated to Lelong numbers and the extension of closed positive currents}, Invent. Math. 27 (1974), 53–156.

\bibitem[Si05]{Si05} Y.-T. Siu, \emph{Multiplier ideal sheaves in complex and algebraic geometry}, Sci. China Ser. A 48 (2005), suppl., 1–31.


\bibitem[Sk72]{Sk72} H. Skoda, \emph{Application des techniques $L^2$ à la théorie des idéaux d'une algèbre de fonctions holomorphes avec poids}, Ann. Sci. École Norm. Sup. (4) 5 (1972), 545–579. 

\bibitem[Sk72a]{Sk72a} H. Skoda, \emph{Sous-ensembles analytiques d'ordre fini ou infini dans $\CC^n$}, Bull. Soc. Math. France 100 (1972), 353–408.

\bibitem[T85]{T85} K. Takegoshi, \emph{Relative vanishing theorems in analytic spaces},
Duke Math. J. 52 (1985), no. 1, 273–279.


\bibitem[V76]{V76} A.N. Varčenko, \emph{Newton polyhedra and estimates of oscillatory integrals} (Russian), Funkcional. Anal. i Priložen. 10 (1976), no. 3, 13–38. 

\bibitem[X20]{X20} C. Xu, \emph{A minimizing valuation is quasi-monomial}, Ann. of Math. (2) 191 (2020), no. 3, 1003–1030.

\end{thebibliography}

\qa

\normalsize

\noi \textsc{Dano Kim}

\noi Department of Mathematical Sciences and Research Institute of Mathematics

\noi Seoul National University, 08826  Seoul, Korea

\noi Email address: kimdano@snu.ac.kr

\qa

\normalsize

\noi \textsc{János Kollár}

\noi Department of Mathematics

\noi Princeton University, Fine Hall, Washington Road, Princeton, NJ 08544-1000, USA

\noi Email address: kollar@math.princeton.edu

\end{document}